\numberwithin{equation}{section}
\newtheorem{thm}{Theorem}[section]
\newtheorem{lem}[thm]{Lemma}
\newtheorem{prop}[thm]{Proposition}
\newtheorem{cor}[thm]{Corollary}
\newtheorem{ex}[thm]{Example}
\newtheorem{rem}[thm]{Remark}
\newtheorem{defi}[thm]{Definition}
\newcommand{\bD}{\mathbf{D}}
\newcommand{\calculations}[1]{}
\begin{document}

\title[Local invariants of conformally deformed non-commutative tori II]{Local invariants of conformally deformed non-commutative tori II: multiple operator integrals}

\author{Teun van Nuland}
\address{University of New South Wales, Kensington, NSW, 2052, Australia}
\email{teunvn@gmail.com}
\author{Fedor Sukochev}
\address{University of New South Wales, Kensington, NSW, 2052, Australia}
\email{f.sukochev@unsw.edu.au}
\author{Dmitriy Zanin}
\address{University of New South Wales, Kensington, NSW, 2052, Australia}
\email{d.zanin@unsw.edu.au}

\begin{abstract} We explicitly compute the local invariants (heat kernel coefficients) of a conformally deformed non-commutative $d$-torus using multiple operator integrals. We derive a recursive formula that easily produces an explicit expression for the local invariants of any order $k$ and in any dimension $d$. Our recursive formula can conveniently produce all formulas related to the modular operator, which before were obtained in incremental steps for $d\in\{2,4\}$ and $k\in\{0,2,4\}$. We exemplify this by writing down some known ($k=2$, $d=2$) and some novel ($k=2$, $d\geq 3$) formulas in the modular operator.
\end{abstract}

\subjclass[2010]{46L87,58B34}

\maketitle

\section{Introduction}

Studying the heat trace expansion on a non-commutative manifold, and computing the respective local invariants (i.e., the non-commutative heat kernel coefficients), is vital for two reasons. Firstly, the heat kernel coefficients play a major role in quantum field theory (cf. \cite{Vassilevich}), and if space turns out to be non-commutative at small scale, these coefficients will need to be generalised. 
Secondly, the local invariants allow to extract geometric information from the spectrum of a Laplace-type operator, and they are therefore good starting points to extend geometric concepts to the setting of non-commutative geometry.


We shall focus on the non-commutative $d$-tori $\mathbb T^d_\theta$, as they are prime examples of non-commutative spaces. A benefit of these examples is that they have clear-cut non-commutative analogues $C^\infty(\mathbb T^d_\theta)$ and $L_\infty(\mathbb T^d_\theta)$ of the commutative algebras $C^\infty(\mathbb T^d)$ and $L_\infty(\mathbb T^d)$, together with a faithful representation $\lambda_l$ on the Hilbert space $L_2(\mathbb T^d_\theta)$, which is an analogue of $L_2(\mathbb T^d)$, and a trace $\tau:L_\infty(\mathbb T^d_\theta)\to\mathbb C$, which is an analogue of integration; see the definitions in Section \ref{sct:NCT}.

The local invariants $I_k(P)$ of an operator $P$ acting in $L_2(\mathbb T^d_\theta)$ are the unique coefficients occurring in the heat trace expansion, which is the asymptotic expansion
\begin{align}\label{eq:asymptotic expansion}
{\rm Tr}(\lambda_l(y)e^{-tP})\sim \sum_{\substack{k\geq0\\ k=0{\rm mod}2}}t^{\frac{k-d}{2}}\tau(y\, I_k(P)),\quad t\downarrow0\qquad (y\in L_\infty(\mathbb T^d_\theta)).
\end{align}
In \cite{SZ-local} it was shown that this expansion exists if (and in particular $e^{-tP}$ is trace class if) $P$ is self-adjoint and of the form
\begin{align}\label{eq:P}
P=\lambda_l(x)\Delta+\sum_{i=1}^d\lambda_l(a_i)D_i+\lambda_l(a)\quad\text{for some}\quad x,a_i,a\in C^{\infty}(\mathbb{T}^d_{\theta}),
\end{align}
with $x$ positive and invertible. Here, $\Delta=\sum_{i=1}^d D_i^2$ and $D_i$ is the $i^{\text{th}}$ directional derivative which, again, is defined in Section \ref{sct:NCT}. In this generalised sense, $P$ is a strongly elliptic differential operator. Besides the existence of the asymptotic expansion, \cite{SZ-local} shows that $I_k(P)\in C^\infty(\mathbb T^d_\theta)$, so $y\mapsto \tau(y\, I_k(P))$ can informally be thought of as `integration' against a smooth `function' $I_k(P)$. The goal of this paper is to explicitly compute $I_k(P)$ for this class of $P$ (those of the form \eqref{eq:P} with $x$ positive and invertible). 

One motivation for us to consider this class of $P$ is the vibrant research program that surrounds the local invariants of the so-called conformally deformed non-commutative torus, a research program that was initiated by the papers \cite{Cohen-Connes,Connes-Moscovici,Connes-Tretkoff}.
The classical limit of the non-commutative torus is simply the flat torus $\mathbb T^d$, which holds no interesting geometry, and likewise $I_k(\Delta)$ is trivial. Geometric non-triviality is added to the torus in \cite{Cohen-Connes} by an adjustment analogous to a conformal scaling of the metric, and the result is called the conformally deformed non-commutative torus. For our purposes, we can capture this conformal scaling (see, e.g., \cite{Connes-Moscovici}) by replacing the Laplacian by operators $P$ of the form \eqref{eq:P} and keeping the same Hilbert space, algebra, and representation.


A main goal in the research program mentioned above is to express the local invariants $I_k(P)$ as closed formulas involving functional calculus applied to the modular operator defining the conformal scaling, as done in \cite{Connes-Moscovici,FaKh1} for $k=2$, $d=2$.

New functions acting by this `modular functional calculus' were found for $d=4,k=2$ in \cite{FaKh} and for $d=2,k=4$ in \cite{CoFa} and to understand and sometimes simplify the vast calculations in these papers, significant progress has been made in \cite{Fa,Lesch,Lesch-Moscovici,Liu}. In this context, various geometric notions were lifted to the non-commutative setting in \cite{Connes-Tretkoff,FaKh1,GloGorKh,Ponge1,Ponge2,PongeHa,Liu1} \textit{et cetera}, 
giving further motivation for the calculation of the local invariants in terms of modular functional calculus, but not yet extending that calculation to higher dimension or higher order.
In most of the papers mentioned above, the local invariants are obtained by means of the zeta function associated with $P$.

Notably, in \cite{IM1,IM2,IM3}, Iochum and Masson took a very different approach, and computed the local invariants of matrix-valued differential operators acting on bundles over manifolds, extending the formulas and algorithms known for computing the Seeley-deWitt coefficients as also done by Gilkey in \cite{Gilkey} (see \cite{Vassilevich} for an overview) to the matrix-valued case. 
This approach is more directly applicable in physics, and further extending their formulas to non-commutative manifolds is likely a worthwhile pursuit.
Indeed, the confrontation of the spectral standard model \cite{Chamseddine-van Suijlekom} with physics, which so far looks promising (cf. \cite{Connes-Chamseddine}), relies precisely on this heat trace expansion for bundles over manifolds (cf. \cite{van Suijlekom}), while in non-commutative quantum field theories like \cite{Grosse-Wulkenhaar} the underlying space itself is non-commutative, and bares resemblance to a non-commutative torus. Similarly, the non-commutative $d$-torus arises from matrix theory compactification, as explained in \cite{CDS,KS}.



In this paper, we give a way to compute $I_k(P)$ explicitly for every $d\in\mathbb N_{\geq2}$ and every $k\in\mathbb Z_{\geq0}$, by making use of the full power of multiple operator integration theory, and using the description of $I_k(P)$ that in \cite{SZ-local} led to the existence of the asymptotic expansion \eqref{eq:asymptotic expansion}. Instead of the abstract modular functional calculus employed by Connes and others, we use the established framework of multiple operator integrals (a short introduction for the non-affiliate is given in Section \ref{sct:MOI}).  Our approach is more similar to the (almost) commutative approach of \cite{IM1,IM2,IM3}, which (as argued in the paragraph above) makes it ideal for physical applications.

Our main result is a compact expression for $I_k(P)$ that involves a simple recursive rule. When a specific $k$ is chosen, this expression can be recursively expanded, and the resulting expressing for $I_k(P)$ is a sum of explicit multiple operator integrals (that in many cases can be computed algebraically, as in Remark \ref{rem:algebraic MOI}). The amount of terms blows up rapidly (1 term for $k=0$, 13 terms for $k=2$, 1046 terms for $k=4$, 140845 terms for $k=6$, \textit{et cetera}). 

We show how one can straightforwardly obtain all functions acting by modular functional calculus from the just-mentioned expressions in terms of multiple operator integrals. Thus we conclude a list \cite{CoFa,Connes-Moscovici,FaKh1,FaKh} of advancements in which such functions were found for slowly increasing $k$ and $d$. Our approach yields a substantial insight into the structure and appearance of such functions in \cite{CoFa,Connes-Moscovici,FaKh1,FaKh}, namely, as the results of a recursive procedure that starts with a simple expression, and increasingly jumbles up the result in each step. In particular, our recursive structure explains the appearance of divided differences in these functions as a result of a commutator rule for multiple operator integrals that is also central to \cite{vNvS}, and hence (through the arguments of \cite{Lesch,Liu2022}) sheds light on the functional relations of \cite{CoFa,Connes-Moscovici}.

Passing from multiple operator integrals to modular functional calculus is quite simple, which we exemplify by producing some known and some novel modular formulas.

\textbf{This paper is structured as follows.} After introducing the beautiful subject of multiple operator integration and fixing our notation in Section \ref{sct:prelim}, we give a comprehensive summary of our main results and applications in Section \ref{sct:main results}. Section \ref{prelim section} contains some groundwork. After that, the proof of our main theorem, Theorem \ref{thm: recursive formula}, will span the whole of Sections \ref{sct:initial}, \ref{sct:Integral formula S and T}, and \ref{sct:pf main thm}. In Section \ref{sct:k=2} we apply our main result to the case $k=2$ and prove Corollary \ref{cor:main thm k=2} and Theorem \ref{cor:main thm k=d=2}. The connection with modular functional calculus is made in Section \ref{sct:Connes-Moscovici}, and the connection with the commutative case is made in Section \ref{sct:Iochum-Masson}. Appendix \ref{appendix A} comments on the accompanying python program.

\textbf{Acknowledgements.~} We heartily thank Bruno Iochum, Matthias Lesch, and Adam Rennie for very illuminating discussions. We are grateful to Christiaan van de Ven for pointing us to \cite{Folland} and to Yerlan Nessipbayev for checking part of our computations. T. v.\hspace*{-0.5pt} N. was supported in part by NSF CAREER grant DMS-1554456, as well as in part by ARC grant FL17010005. F. S. was supported by ARC grant FL17010005 and ARC grant DP230100434. D. Z. was supported by ARC grant DP230100434.

\section{Preliminaries}\label{sct:prelim}

We let $\mathbb N=\{1,2,\ldots\}$, $\mathbb Z_+=\mathbb Z_{\geq 0}$, $\mathbb T:=\mathbb R/\mathbb Z$, $\mathbb R_-=(-\infty,0)$. 
On a suitable set $X$ we let $C^\infty(X),L_1(X),L_2(X)$ denote the smooth, Lebesgue integrable, and square-integrable functions, respectively. For $n\in\mathbb Z^m$ we write $|n|_1:=\sum_{i=1}^m|n_i|$. The bounded operators on a Hilbert space $\mathcal H$ are denoted $\mathcal B(\mathcal H)$.

\subsection{Multiple operator integrals}\label{sct:MOI}
\subsubsection{Introduction}

The role that multiple operator integrals play for local invariants has never been clearly spelled out (and was but mentioned in \cite{Liu2022}).
However, when computing local invariants, one frequently (e.g. in \cite{FaKh1,IM1,IM2,IM3,Lesch,Liu}) encounters integrals that look roughly similar to, for example,
\begin{align}\label{eq:MOI 1}
\int_0^1 \int_0^{s_1} e^{(s_1-1)x}\,V_1\,e^{(s_2-s_1)x}\,V_2\,e^{-s_2x}\, ds_2 \, ds_1;\\
\label{eq:MOI 2}
\int_{\mathbb R} \, \frac{1}{x+i\lambda}\,V_1\,\frac{1}{x+i\lambda}\,V_2\,\frac{1}{x+i\lambda}\frac{e^{i\lambda}}{2\pi} \,d\lambda;
\end{align}
or any other integral over an alternating product of bounded operators $V_1,V_2$ and functions of a self-adjoint operator $x$. These are in fact special cases of multiple operator integrals, namely, integrals of the form \eqref{eq:MOI} in the following definition.
\begin{defi}\label{def:MOI}
Let $n\in\mathbb N$, and let $x$ be a (possibly unbounded) self-adjoint operator in a separable Hilbert space $\mathcal H$. Let $\phi:\mathbb R^{n+1}\to\mathbb C$ be given by
\begin{align}\label{eq:sep of variables phi}
\phi(\alpha_0,\ldots,\alpha_n)=\int_\Omega a_0(\alpha_0,\lambda)a_1(\alpha_1,\lambda)\cdots a_n(\alpha_n,\lambda)\,d\lambda,
\end{align}
 for bounded measurable functions $a_0,\ldots,a_n:\mathbb R\times\Omega\to\mathbb C$ and a finite measure space $(\Omega,\lambda)$. The multiple operator integral is the multilinear function $T_\phi^x:\mathcal B(\mathcal H)^{\times n}\to \mathcal B(\mathcal H)$ defined by
\begin{align}\label{eq:MOI}
T^x_\phi(V_1,\ldots,V_n):=\int_\Omega a_0(x,\lambda)V_1a_1(x,\lambda)\cdots V_na_n(x,\lambda)\,d\lambda,
\end{align}
for $V_1,\ldots,V_n\in\mathcal B(\mathcal H)$.
\end{defi}
This definition was given in \cite[Definition 4.1]{ACDS} and \cite{Peller}, and it is a simple but crucial result of \cite[Lemma 4.3]{ACDS} and \cite[Lemma 3.1]{Peller} that $T^x_\phi$ only depends on the functions $a_0,\ldots,a_n$ through $\phi$, as the notation suggests.
Indeed, under reasonable assumptions, \eqref{eq:MOI 1} equals \eqref{eq:MOI 2}. This explains why the literature sometimes contains different procedures to calculate the same thing.
Providing an elegant unified picture is not the only purpose of multiple operator integration (and we ensure the critical reader that we are not merely casting known results into new notation). The theory of multiple operator integration provides extremely strong results on the analytical properties of integrals like \eqref{eq:MOI 1} and \eqref{eq:MOI 2}, and moreover, the formalism often leads to completely new results or extensive generalisations of known ones. (See \cite{Skripka-Tomskova} for an overview of theory and applications.)
\subsubsection{Basic results on multiple operator integrals}
If $n=0$, then Definition \ref{def:MOI} recovers functional calculus: $$T^x_\phi()=\phi(x).$$ Moreover, if $V_1,\ldots,V_n$ commute with $x$ then $T^x_\phi(V_1,\ldots,V_n)$ by definition reduces to
$\phi(x,\ldots,x)V_1\cdots V_n.$
 This paper deals exclusively with bounded $x\in \mathcal B(\mathcal H)$, so let us assume this from now on.
We remark that $\phi\mapsto T^x_\phi$ factors through $\phi\mapsto \phi|_{{\rm spec}(x)^{n+1}}$. The function $\phi$ is called the symbol of $T^x_\phi$. The symbols we encounter most are \textit{divided differences} $\phi=f^{[n]}$ of some $f\in C^n(\mathbb R)$. By definition, and, subsequently, by induction, we have
\begin{align}
\label{eq:first id dd}f^{[0]}(\alpha_0):=&~f(\alpha_0);\\
\label{eq:second id dd}f^{[n]}(\alpha_0,\ldots,\alpha_n):=&\frac{f^{[n-1]}(\alpha_0,\alpha_1,\ldots,\alpha_{n-1})-f^{[n-1]}(\alpha_1,\ldots,\alpha_n)}{\alpha_0-\alpha_n}\\
\label{eq:third id dd}=&~\int_{\mathbb R}\int_{S^n} \widehat{f^{(n)}}(t)e^{it\lambda_0\alpha_0}\cdots e^{it\lambda_n\alpha_n}\,d\lambda\,dt,
\end{align}
where $\lambda$ is the flat measure on the simplex $S^n=\{\lambda\in\mathbb R_{\geq0}^{n+1}:~\sum_{j=0}^n\lambda_j=1\}$ with $\lambda(S^n)=1/n!$, and we have assumed for simplicity that the Fourier transform $\widehat{f^{(n)}}$ exists in $L_1(\mathbb R)$, and that $\alpha_0\neq\alpha_n$. It is well known that $f^{[n]}$ actually extends to $f^{[n]}\in C(\mathbb R^{n+1})$, satisfies $f^{[n]}(\alpha,\ldots,\alpha)=\frac{1}{n!}f^{(n)}(\alpha)$, and is invariant under permutations of its arguments, for all $f\in C^n(\mathbb R)$.
By comparing \eqref{eq:third id dd} to \eqref{eq:sep of variables phi}, we notice that $T^x_{f^{[n]}}$ is defined whenever $f\in C^n(\mathbb R)$ and $\widehat{f^{(n)}}\in L_1(\mathbb R)$. By \eqref{eq:first id dd} and \eqref{eq:second id dd}, $f\mapsto T^x_{f^{[n]}}$ factors through $f\mapsto f|_I$, where $I$ is any neighbourhood of ${\rm spec}(x)$, and hence $T^x_{f^{[n]}}$ is defined whenever $f\in C^\infty(\mathbb R)$, as $x$ is assumed bounded. More generally, $T^x_\phi$ is defined whenever $\phi\in C^\infty(\mathbb R^{n+1})$.
Some important identities in this case are (cf. \cite[Lemma 14]{vNvS})
\begin{align}
&T^x_{f^{[n]}}(V_1,\ldots,V_j,yV_{j+1},\ldots,V_n)-T^x_{f^{[n]}}(V_1,\ldots,V_jy,V_{j+1},\ldots,V_n)\nonumber\\
&=T^x_{f^{[n+1]}}(V_1,\ldots,V_j,[x,y],V_{j+1},\ldots,V_n);\nonumber\\
&T^x_{f^{[n]}}(yV_1,\ldots,V_n)-yT^x_{f^{[n]}}(V_1,\ldots,V_n)=T^x_{f^{[n+1]}}([x,y],V_1,\ldots,V_n);\nonumber\\
&T^x_{f^{[n]}}(V_1,\ldots,V_n)y-T^x_{f^{[n]}}(V_1,\ldots,V_ny)=T^x_{f^{[n+1]}}(V_1,\ldots,V_n,[x,y]);\label{eq:MOI comm 3}\\
&f(x)y-yf(x)=T^x_{f^{[0]}}()y-yT^x_{f^{[0]}}()=T^x_{f^{[1]}}([x,y]),\nonumber
\end{align}
for $x,y,V_j\in \mathcal B(\mathcal H)$.
In this paper we often see $f=F_{k,d}$, where $F_{k,d}$ is our notation for the $\frac{k}{2}^{\text{th}}$ order primitive of $\alpha\mapsto\alpha^{-\frac{d}{2}}$ on $(0,\infty)$, and ${\rm spec}(x)\subseteq (0,\infty)$. In the literature, the dimension $d$ of the (non-commutative) space is often even and the order $k$ (appearing in $I_k(P)$) is often small. In these abundant cases the multiple operator integral is extremely explicit:
\begin{rem}\label{rem:algebraic MOI}
Suppose that $d$ is even and that $k<d$. Then $F_{k,d}$ is an integer power function, so for every $n\in\mathbb N$ there exists a finite $I \subseteq\mathbb Z^{n+1}$ and some constants $c_i$ such that, for all $\alpha_j>0$,
$$F_{k,d}^{[n]}(\alpha_0,\ldots,\alpha_n)=\sum_{i\in I}c_i\alpha_0^{i_0}\cdots\alpha_n^{i_n}.$$
As a consequence, the multiple operator integral is a purely algebraic expression,
$$T^x_{F_{k,d}^{[n]}}(V_1,\ldots,V_n)=\sum_{i\in I}c_ix^{i_0}V_1x^{i_1}\cdots V_nx^{i_n},$$
for all $V_j\in\mathcal B(\mathcal H)$ and positive invertible $x\in\mathcal B(\mathcal H)$.
\end{rem}

\subsection{Non-commutative torus}\label{sct:NCT}
Regarding the non-commutative torus, we use the definitions of \cite{SZ-local}, which we only briefly recall in this section. We omit the proofs of the folklore assertions below, some of which can be found in \cite[\textsection 2]{SZ-local}. 

For any $d\in\mathbb N_{\geq2}$, we let $\theta\in M_d(\mathbb R^d)$ be an antisymmetric matrix. Let $A_\theta$ be the unital *-algebra generated by formal symbols $U_1,\ldots,U_d$ satisfying $U_k^*U_k=U_kU_k^*=1$ and $U_kU_l=e^{2\pi i\theta_{kl}}U_lU_k$, and write $U^n:=U_1^{n_1}\cdots U_d^{n_d}$ for all $n\in\mathbb Z^d$. We define a linear function $\tau:A_\theta\to\mathbb C$ by $\tau(\sum_n c_n U^n):=c_0$. We let $L_2(\mathbb T^d_\theta)$ be the completion of $A_\theta$ in the norm $\|a\|:=\langle a,a\rangle^{\frac12}$ defined by the (nondegenerate) inner product $\langle a,b\rangle:=\tau(a^*b)$, which makes $\mathcal H:=L_2(\mathbb T^d_\theta)$ a separable Hilbert space. We define $D_k(\sum_n c_n U^n):=\sum_n c_n n_kU^n$ on $A_\theta$, and let $C^\infty(\mathbb T^d_\theta)$ be the completion of $A_\theta$ in the (Fr\'echet) seminorms $a\mapsto \|D^\alpha a\|$, $\alpha\in\mathbb Z^d_+$, where $D^\alpha:=D_1^{\alpha_1}\cdots D_d^{\alpha_d}$. Each $D^\alpha$ extends to a self-adjoint operator densely defined in $L_2(\mathbb T^d_\theta)$, with $C^\infty(\mathbb T^d_\theta)=\cap_\alpha {\rm dom}\,D^\alpha$. Hence, $C^\infty(\mathbb T^d_\theta)$ is stable under holomorphic functional calculus. We represent $A_\theta$ on $L_2(\mathbb T^d_\theta)$ by $\lambda_l(a)b:=ab$, and denote by $L_\infty(\mathbb T^d_\theta)$ the corresponding weak closure of $A_\theta$, a von Neumann algebra with operator norm denoted $\|\cdot\|_\infty$. We identify $C^\infty(\mathbb T^d_\theta)\subseteq L_\infty(\mathbb T^d_\theta)$ and $L_2(\mathbb T^d_\theta)\subseteq L_\infty(\mathbb T^d_\theta)$. Both $\tau$ and $\lambda_l$ extend continuously to $L_\infty(\mathbb T^d_\theta)$, giving a faithful tracial state $\tau:L_\infty(\mathbb T^d_\theta)\to \mathbb C$ and a faithful representation (injective *-homomorphism) $\lambda_l:L_\infty(\mathbb T^d_\theta)\to\mathcal B(L_2(\mathbb T^d_\theta))$.

%

\section{Summary of main results}\label{sct:main results}

Before stating our main result, we shall introduce the recursive structure that lies at its core.

\subsection{Recursive structure}

We let $\mathbf D_1,\ldots,\mathbf D_d$ be the formal symbols of the polynomial algebra $\mathbb C[\mathbf D_1,\ldots,\mathbf D_d]$, i.e., we impose only the relation $\mathbf D_i\mathbf D_j=\mathbf D_j\mathbf D_i$ for all $i,j\in\{1,\ldots,d\}$. We write $\mathbf D^\alpha:=\mathbf D_1^{\alpha_1}\cdots\mathbf D_d^{\alpha_d}$ for all $\alpha\in\mathbb Z_+^d$.
We then define the free left $C^\infty(\mathbb T^d_\theta)$-module 
$$\mathcal{X}:= {\rm span}\big\{b\mathbf{D}^\alpha:~b\in C^\infty(\mathbb{T}^d_\theta),\alpha\in\mathbb Z_+^d\big\},$$ generated by the set of formal symbols $\{\mathbf D^\alpha:~\alpha\in\mathbb Z_+^d\}$, and refer to elements of $\mathcal{X}$ as formal differential operators. We identify $C^\infty(\mathbb T^d_\theta)\subseteq\mathcal{X}$ by identifying $b=b\mathbf D^0$.

\begin{defi}\label{def:bT}
Let $x\in C^\infty(\mathbb T^d_\theta)$ be self-adjoint and $f:\mathbb R\to\mathbb R$ be smooth when restricted to the spectrum of $x$. For every $m\in\mathbb Z_+$, we recursively define multilinear mappings $\mathbf T^{x,m}_{f}:\mathcal{X}^{\times m}\to C^\infty(\mathbb T^d_\theta)$ by firstly setting
$$\mathbf T^{x,m}_{f}(b_1,\ldots,b_m):=T^{x}_{f^{[m]}}(b_1,\ldots,b_m),$$
for all $b_1,\ldots,b_m\in C^\infty(\mathbb{T}^d_\theta)\subseteq\mathcal{X}$, secondly setting
\begin{align}
&\nonumber\mathbf T^{x,m}_{f}(\mathbf B_1,\ldots,\mathbf B_{k-1},\mathbf{B}_k\mathbf{D}_i,b_{k+1},\ldots,b_m)\\
&\qquad\qquad\qquad\qquad:=\nonumber\mathbf T^{x,m+1}_{f}(\mathbf B_1,\ldots,\mathbf B_{k},D_ix,b_{k+1},\ldots,b_m)\\
&\nonumber\qquad\qquad\qquad\qquad\quad\,+\mathbf T^{x,m}_{f}(\mathbf B_1,\ldots,\mathbf{B}_k,D_ib_{k+1},b_{k+2},\ldots,b_m)\\
&\qquad\qquad\qquad\qquad\quad\,+\mathbf T^{x,m}_{f}(\mathbf B_1,\ldots,\mathbf{B}_k,b_{k+1}\mathbf D_i,b_{k+2},\ldots,b_m),\label{eq:def bT 2}
\end{align}
for all $\mathbf B_1,\ldots,\mathbf B_m\in\mathcal{X}$ and $k<m$,
and lastly setting
\begin{align}\label{eq:def bT 3}
\mathbf T^{x,m}_{f}(\mathbf B_1,\ldots,\mathbf B_{m-1},\mathbf B_m\mathbf{D}_i):=\mathbf T^{x,m+1}_{f}(\mathbf B_1,\ldots,\mathbf B_m,D_ix).
\end{align}
\end{defi}
Well-definedness of $\mathbf T^{x,m}_{f}$ is shown in Lemma \ref{lem:T well defined}. 
\begin{ex}\label{ex:bT}
As a simple example of Definition \ref{def:bT} we have
\begin{align}\label{eq:first example bT}
\mathbf T^{x,3}_{f}(a,b\mathbf D_i)=T^{x}_{f^{[4]}}(a,b,D_ix).
\end{align}
A bit more work, but still easy is
\begin{align*}
\mathbf T^{x,3}_{f}(a\mathbf D_i,b,c)=&T^{x}_{f^{[4]}}(a,D_ix,b,c)+T^{x}_{f^{[3]}}(a,D_ib,c)+\mathbf T^{x,3}_{f}(a,b\mathbf D_i,c)\\
=&T^{x}_{f^{[4]}}(a,D_ix,b,c)+T^{x}_{f^{[3]}}(a,D_ib,c)+T^{x}_{f^{[4]}}(a,b,D_ix,c)\\
&+T^{x}_{f^{[3]}}(a,b,D_ic)+T^{x}_{f^{[4]}}(a,b,c,D_ix).
\end{align*}
An only slightly more involved expression like
$$\mathbf T^{x,3}_{f}(a\mathbf D_i\mathbf D_j,b\mathbf D_k,c)$$
already produces 145 terms, which can be straightforwardly obtained if one has enough time (or a computer at hand).
\end{ex}
 The above example illustrates that, however complicated $\mathbf B_1,\ldots,\mathbf B_m\in\mathcal X$ might be, $\mathbf T^{x,m}_{f}(\mathbf B_1,\ldots,\mathbf B_{m})$ can always be written as a sum of multiple operator integrals with arguments in $C^\infty(\mathbb T^d_\theta)$.
It also illustrates that, morally, we have 
\begin{align}\label{eq:moral identity}
\mathbf T^{x,m}_{f}(b_1\mathbf D^{\alpha_1},\ldots,b_m\mathbf D^{\alpha_m})=``\,T^{\lambda_l(x)}_{f^{[m]}}(\lambda_l(b_1)D^{\alpha_1},\ldots,\lambda_l(b_m)D^{\alpha_m})(1)\,",
\end{align}
in the sense that, if we would take \eqref{eq:MOI comm 3} at face value, we would have (using $D_i1=0$)
\begin{align*}
``T^{\lambda_l(x)}_{f^{[3]}}(\lambda_l(a),\lambda_l(b)D_i)(1)"&=T^{\lambda_l(x)}_{f^{[4]}}(\lambda_l(a),\lambda_l(b),[D_i,\lambda_l(x)])(1)\\
&=T^{\lambda_l(x)}_{f^{[4]}}(\lambda_l(a),\lambda_l(b),\lambda_l(D_ix))(1)\\
&=T^{x}_{f^{[4]}}(a,b,D_ix),
\end{align*}
which mimics \eqref{eq:first example bT}, and similarly for the other defining properties of $\mathbf T^{x,m}_f$.
However, the unbounded arguments of the multiple operator integrals between quotes warrant some caution. The moral identity \eqref{eq:moral identity} is made rigorous by Corollary \ref{cor:S and bT}, which forms a crucial step towards our main theorem.
 
The final ingredients for our main result are the elements that we use as inputs of the mappings $\mathbf T^{x,m}_f$.
Let $P$ be as in \eqref{eq:P}.
For every $m\in\mathbb N$, every subset $\mathscr{A}\subseteq\{1,\ldots,m\}$, and every function $\iota:\mathscr{A}\to\{1,\ldots,d\}$, we define $\mathbf W^{\mathscr A,\iota}_1,\ldots,\mathbf W^{\mathscr A,\iota}_m\in\mathcal X$ by
\begin{align}\label{eq:bW}
\mathbf W^{\mathscr{A},\iota}_j=
\begin{cases}
{\bf A}_{\iota(j)}\quad&(j\in\mathscr{A});\\
{\bf P},&(j\notin\mathscr{A}),
\end{cases}
\end{align}
where (for all $i\in\{1,\ldots,d\}$)
\begin{align}\label{eq:bf Ai and bf P}
{\bf A}_i:=2x\bD_i+a_i,\quad {\bf P}:=x\sum_{i=1}^d \bD_i^2+\sum_{i=1}^da_i\bD_i+a\quad\in\quad\mathcal X.
\end{align}

\subsection{Main result}
Our main result is formulated as follows.

\begin{thm}\label{thm: recursive formula}
Let $d\in\mathbb N_{\geq 2}$, $k\in 2\mathbb Z_+,$ and let $P$ be a self-adjoint operator acting in $L_2(\mathbb T^d_\theta)$ of the form \eqref{eq:P} for positive invertible $x$. The $k^{\text{th}}$ order local invariant of $P$ occurring in the asymptotic expansion \eqref{eq:asymptotic expansion} takes the form
\begin{align}\label{eq:main thm}
I_k(P)=(-1)^{\frac{k}{2}}\pi^{\frac{d}{2}}\sum_{\frac{k}{2}\leq m\leq k}\,\sum_{\substack{\mathscr A\subseteq\{1,\ldots,m\}\\|\mathscr A|=2m-k}}\,\sum_{\iota:\mathscr{A}\to\{1,\ldots,d\}}
c_d^{(\iota)}\mathbf T^{x,m}_{F_{k,d}}(\mathbf W^{\mathscr{A},\iota}_1,\ldots,\mathbf W^{\mathscr{A},\iota}_m),
\end{align}
where $F_{k,d}$ is any $\frac{k}{2}^{\text{th}}$ order primitive of $\alpha\mapsto\alpha^{-\frac{d}{2}}$ and
$$c_d^{(\iota)}:=\frac{1}{{\rm vol}(\mathbb S^{d-1})}\int_{\mathbb S^{d-1}}\prod_{j\in\mathscr A} u_{\iota(j)}\,du.$$
For non-self-adjoint $P$ the right-hand side of \eqref{eq:main thm} still exists, and we may take this as the extended definition of $I_k(P)$ (as it coincides with Definition \ref{ik notation}).
\end{thm}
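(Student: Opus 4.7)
The plan is to start from the representation of $I_k(P)$ given in \cite{SZ-local}, expand via a Volterra/Neumann-type perturbation series around the principal part $\lambda_l(x)\Delta$, and reorganise each resulting term into the compact form $\mathbf T^{x,m}_{F_{k,d}}(\mathbf W^{\mathscr A,\iota}_1,\ldots,\mathbf W^{\mathscr A,\iota}_m)$. Writing $p(\xi):=x|\xi|^2+\sum_i a_i\xi_i+a$ for the formal symbol of $P$, one expands $(P-\lambda)^{-1}$ as iterated products of free resolvents $(x|\xi|^2-\lambda)^{-1}$ separated by perturbation insertions. A careful bookkeeping reveals that these insertions come in exactly two shapes: factors linear in $\xi$, namely $\partial_{\xi_i}p=2x\xi_i+a_i$, corresponding to $\mathbf A_i=2x\mathbf D_i+a_i$; and factors constant in $\xi$, corresponding to the full symbol $\mathbf P$. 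The subset $\mathscr A\subseteq\{1,\ldots,m\}$ and the map $\iota$ index the positions and directions of the $\mathbf A$-type insertions; the constraint $|\mathscr A|=2m-k$ then records that the total $\xi$-degree of each term equals $k$, which is the exponent needed to contribute to $I_k$ after $\xi$-integration and the Mellin transform in $\lambda$.

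The second step is to carry out the $\xi$-integration for each term. Polar coordinates $\xi=ru$ separate the variables, so the angular integral is exactly $\int_{S^{d-1}}\prod_{j\in\mathscr A}u_{\iota(j)}\,du=\mathrm{vol}(S^{d-1})\,c_d^{(\iota)}$ (automatically vanishing when $|\mathscr A|=2m-k$ is odd, which is consistent with $k\in 2\mathbb Z_+$). The radial integral in $r$, combined with the Cauchy contour integral in $\lambda$ arising from $e^{-tP}=\frac{1}{2\pi i}\int_\Gamma e^{-t\lambda}(P-\lambda)^{-1}\,d\lambda$ and the extraction of the $t^{(k-d)/2}$ coefficient, is a standard Mellin-type calculation producing the function $F_{k,d}$ together with the scalar prefactor $(-1)^{k/2}\pi^{d/2}$ (the $\pi^{d/2}$ from spherical/Gaussian constants; the sign from the $k/2$ integration-by-parts steps that turn $\alpha^{-d/2}$-type powers into the $k/2$-th primitive $F_{k,d}$ in divided-difference form). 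This step also explains why only $F_{k,d}^{[m]}$ with $m\geq k/2$ appears: the freedom in choosing the primitive $F_{k,d}$ amounts to adding polynomials of degree $\leq k/2-1$, which is killed by every divided difference of order $m\geq k/2$, so the formula does not depend on the specific primitive.

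The final and most delicate step is to identify the iterated integral thus obtained with $\mathbf T^{x,m}_{F_{k,d}}(\mathbf W^{\mathscr A,\iota}_1,\ldots,\mathbf W^{\mathscr A,\iota}_m)$. Morally this is the informal equality \eqref{eq:moral identity}, where the three clauses of Definition \ref{def:bT} exactly mirror repeated application of the commutator rule \eqref{eq:MOI comm 3} to $T^{\lambda_l(x)}_{F_{k,d}^{[m]}}(\lambda_l(b_1)D^{\alpha_1},\ldots)(1)$ using $[D_i,\lambda_l(x)]=\lambda_l(D_ix)$ and the Leibniz rule. I expect this to be the main obstacle, because $D_i$ is unbounded and cannot be passed directly as an argument to the multiple operator integral $T^x_\phi$. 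The rigorous route, to be developed in Section \ref{sct:Integral formula S and T}, is to introduce intermediate bounded objects---the $S$-operators---defined by explicit integral formulas against the free heat kernel/resolvent, to verify that these $S$-operators obey precisely the same three-clause recursion as Definition \ref{def:bT}, and thereby to deduce Corollary \ref{cor:S and bT} linking $S$ to $\mathbf T$. The concluding claim about non-self-adjoint $P$ is then immediate, since every ingredient of \eqref{eq:main thm}---the MOIs, the recursive rule, the constants $c_d^{(\iota)}$, the function $F_{k,d}$---is well-defined purely from positive invertibility of $x$ and smoothness of $a_i,a$, and self-adjointness of $P$ is used only to give meaning to $I_k(P)$ via the asymptotic expansion \eqref{eq:asymptotic expansion}.
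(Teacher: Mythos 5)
Your proposal follows essentially the same route as the paper: start from the resolvent-expansion representation of $I_k(P)$ in Definition \ref{ik notation} (taken from \cite{SZ-local}), split off the angular integral in polar coordinates to produce $c_d^{(\iota)}$, compute the $\lambda$- and radial integrals to produce $(-1)^{k/2}\pi^{d/2}F_{k,d}^{[m]}$, and make the formal identity \eqref{eq:moral identity} rigorous by showing that the resolvent-product operators $S^m_{s,z}$ satisfy the same three-clause recursion as $\mathbf T^{x,m}_f$, concluding via Corollary \ref{cor:S and bT}. Two minor points of difference: the paper never re-extracts the $t^{(k-d)/2}$ coefficient from $e^{-tP}$ (that identification is delegated wholesale to Theorem \ref{companion thm}, so no contour/Mellin step around the heat semigroup is needed, only the $e^{i\lambda}$-integral already present in Definition \ref{first exponential notation}), and the interchange of the $s$-integration with the multiple operator integral, which you describe as a standard calculation, is where the paper invests its main analytic effort (Lemma \ref{lem:MOI bound Banach}, Proposition \ref{wiener moi fact}, and Lemmas \ref{bochner wiener lemma}--\ref{exact integral lemma} in the spaces $(\dot W^{n,2}\cap\dot W^{n+1,2})(I)$).
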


\begin{rem}\label{rem:Fkd}
An explicit $\frac{k}{2}^{\text{th}}$ order primitive of $\alpha\mapsto\alpha^{-\frac{d}{2}}$ is given by
$$F_{k,d}(\alpha):=\begin{cases}
(-1)^{\frac{k}{2}}\frac{\Gamma(\frac{d}{2}-\frac{k}{2})}{\Gamma(\frac{d}{2})}\alpha^{\frac{k-d}{2}}\quad&\text{if $d$ is odd or $k<d$;}\\
(-1)^{\frac{d}{2}-1}\frac{1}{(\frac{d}{2}-1)!(\frac{k}{2}-\frac{d}{2})!}\alpha^{\frac{k-d}{2}}\log(\alpha)\quad&\text{if $d$ is even and $k\geq d$.}
\end{cases}$$
In particular, we have $F_{2,2}=\log$.
\end{rem}

\begin{rem}\label{rem:c_d}
The constants $c_d^{(\iota)}$ are rational, invariant under permutations on the domain and range of $\iota$, and, lastly, easy to compute. Writing $n_j:=|\iota^{-1}(\{j\})|$, we have (cf. \cite{Folland})
$$c_d^{(\iota)}=
\begin{cases}
\frac{(d-2)!!\prod_{j=1}^d(n_j-1)!!}{(|n|_1+d-2)!!}\quad&\text{if $n_1,\ldots n_d$ are even;}\\
0\quad&\text{otherwise.}
\end{cases}$$
Here we use the usual convention $(-1)!!=1$.
\end{rem}

To illustrate what \eqref{eq:main thm} means in practice, we note that for $k=0,2,4$ it states that (cf. Section \ref{sct:k=2})
\begin{align*}
\pi^{-\frac{d}{2}}I_0(P)=&\mathbf T^{x,0}_{F_{0,d}}();\\
-\pi^{-\frac{d}{2}}I_2(P)=&\mathbf T^{x,1}_{F_{2,d}}(\mathbf P)+\sum_{i=1}^d\frac1d\mathbf T^{x,2}_{F_{2,d}}(\mathbf A_i,\mathbf A_i);\\
\pi^{-\frac{d}{2}}I_4(P)=&\mathbf T^{x,2}_{F_{4,d}}(\mathbf P,\mathbf P)+\sum_{i=1}^d\frac1d\Big(\mathbf T^{x,3}_{F_{4,d}}(\mathbf P,\mathbf A_i,\mathbf A_i)+\mathbf T^{x,3}_{F_{4,d}}(\mathbf A_i,\mathbf P,\mathbf A_i)\\
&+\mathbf T^{x,3}_{F_{4,d}}(\mathbf A_i,\mathbf A_i,\mathbf P)\Big)+\sum_{i,j,k,l=1}^dc_d^{(i,j,k,l)}\mathbf T^{x,4}_{F_{4,d}}(\mathbf A_i,\mathbf A_j,\mathbf A_k,\mathbf A_l),
\end{align*}
but the real beauty of \eqref{eq:main thm} is that this compact expression holds for any $k$.

\subsection{Consequences of our main result}

Straightforward corollaries of Theorem \ref{thm: recursive formula} are obtained by fixing $k$ and expanding the recursive definition of $\mathbf T^{x,m}_f$ (as in Remark \ref{rem:algebraic MOI}) into explicit sums of multiple operator integrals with arguments in the non-commutative torus. The resulting formula for $I_0$ is nothing new, namely
$$\pi^{-\frac{d}{2}}I_0(P)=T^x_{F_{0,d}^{[0]}}()=F_{0,d}(x)=x^{-\frac{d}{2}}.$$ However, the resulting formula for the second order local invariant $I_2$, which is sometimes called the scalar curvature, is already of note. We obtain the following explicit expression, which for $d=2,4$ can be used to recover the results of \cite{Connes-Moscovici,Fa,FaKh1,FaKh} (more on this later).

\begin{cor}\label{cor:main thm k=2} 
For any dimension $d\in\mathbb N_{\geq2}$, and $P$ acting in $L_2(\mathbb T^d_\theta)$ of the form \eqref{eq:P} for positive invertible $x$, the second order local invariant of $P$ is computed by
\begin{align*}
-\pi^{-\frac{d}{2}}I_2(P)=&\sum_{i=1}^d\Big(
2T^x_{F_{2,d}^{[3]}}(x,D_ix,D_ix)
+T^x_{F_{2,d}^{[2]}}(a_i,D_ix)\Big)
+T^x_{F_{2,d}^{[2]}}(x,\Delta x)\\
&+T^x_{F_{2,d}^{[1]}}(a)+
\sum_{i=1}^d\frac{1}{d}\Big(
4T^x_{F_{2,d}^{[4]}}(x,D_ix,x,D_ix)
+4T^x_{F_{2,d}^{[3]}}(x,D_ix,D_ix)\\
&+8T^x_{F_{2,d}^{[4]}}(x,x,D_ix,D_ix)
+2T^x_{F_{2,d}^{[3]}}(x,D_ix,a_i)
+2T^x_{F_{2,d}^{[2]}}(x,D_ia_i)\\
&+2T^x_{F_{2,d}^{[3]}}(x,a_i,D_ix)
+2T^x_{F_{2,d}^{[3]}}(a_i,x,D_ix)
+T^x_{F_{2,d}^{[2]}}(a_i,a_i)
\Big)\\
&+\frac{4}{d}T^x_{F_{2,d}^{[3]}}(x,x,\Delta x).
\end{align*}
\end{cor}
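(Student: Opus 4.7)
The plan is to instantiate Theorem~\ref{thm: recursive formula} at $k=2$ and then mechanically unwind Definition~\ref{def:bT}. First I would identify the contributing triples $(m,\mathscr A,\iota)$: the range $k/2\leq m\leq k$ forces $m\in\{1,2\}$, and $|\mathscr A|=2m-2$ gives $\mathscr A=\emptyset$ for $m=1$ (so $\iota$ is empty, with $c_d^{(\iota)}=1$), while $\mathscr A=\{1,2\}$ for $m=2$. The parity condition in Remark~\ref{rem:c_d} forces $\iota(1)=\iota(2)=:i$, and then $c_d^{(\iota)}=(d-2)!!/d!!=1/d$. This reduces the theorem to the two-summand expression already displayed in the text just before the corollary, namely
\[
-\pi^{-d/2}\,I_2(P)=\mathbf T^{x,1}_{F_{2,d}}(\mathbf P)+\frac{1}{d}\sum_{i=1}^d\mathbf T^{x,2}_{F_{2,d}}(\mathbf A_i,\mathbf A_i).
\]

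Next I would substitute $\mathbf P=x\sum_j\mathbf D_j^2+\sum_j a_j\mathbf D_j+a$ and $\mathbf A_i=2x\mathbf D_i+a_i$ and use multilinearity to expand each summand into monomial terms $\mathbf T^{x,m}_{F_{2,d}}(\mathbf B_1,\ldots,\mathbf B_m)$ in which each $\mathbf B_j$ has the form $b\,\mathbf D^{\alpha}$ with $|\alpha|\leq 2$. I would then strip the formal symbols $\mathbf D_i$ one at a time via Definition~\ref{def:bT}: apply~\eqref{eq:def bT 3} whenever the rightmost argument still carries a $\mathbf D_i$, and otherwise apply~\eqref{eq:def bT 2} at the rightmost interior position carrying one. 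Since $|\alpha|\leq 2$, this recursion terminates after at most two passes and turns each monomial into a finite sum of ordinary multiple operator integrals $T^x_{F_{2,d}^{[n]}}(b_1,\ldots,b_n)$ with $b_j\in C^\infty(\mathbb T^d_\theta)$.

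The last step is to collect the resulting terms and recognise $\sum_i D_i^2 x=\Delta x$ in the two places it appears, reading off the displayed formula line by line. There is no genuine analytic obstacle; the entire argument is algebraic bookkeeping and could be (and in fact is, via the program of Appendix~\ref{appendix A}) automated. The most involved single expansion is $\mathbf T^{x,2}_{F_{2,d}}(x\mathbf D_i,x\mathbf D_i)$, where both arguments carry a $\mathbf D_i$; applying~\eqref{eq:def bT 3} at position~$2$, then~\eqref{eq:def bT 2} at position~$1$, and then further~\eqref{eq:def bT 2}--\eqref{eq:def bT 3} steps on the $\mathbf D_i$ reintroduced by the third term of~\eqref{eq:def bT 2}, one obtains the four ordinary MOIs $T^x_{F_{2,d}^{[4]}}(x,D_ix,x,D_ix)$, $T^x_{F_{2,d}^{[3]}}(x,D_ix,D_ix)$, $2\,T^x_{F_{2,d}^{[4]}}(x,x,D_ix,D_ix)$ and $T^x_{F_{2,d}^{[3]}}(x,x,D_i^2 x)$. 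Multiplying by the overall $4/d$ and summing over $i$ yields the two $F_{2,d}^{[4]}$-terms, the first $F_{2,d}^{[3]}$-term, and the final $(4/d)\,T^x_{F_{2,d}^{[3]}}(x,x,\Delta x)$-term of the corollary, while analogous but simpler expansions of $\mathbf T^{x,1}_{F_{2,d}}(\mathbf P)$, $\mathbf T^{x,2}_{F_{2,d}}(x\mathbf D_i,a_i)$, $\mathbf T^{x,2}_{F_{2,d}}(a_i,x\mathbf D_i)$ and $\mathbf T^{x,2}_{F_{2,d}}(a_i,a_i)$ fill out the rest of the statement.
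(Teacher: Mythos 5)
Your proposal is correct and follows essentially the same route as the paper: first reduce Theorem \ref{thm: recursive formula} at $k=2$ to $-\pi^{-d/2}I_2(P)=\mathbf T^{x,1}_{F_{2,d}}(\mathbf P)+\frac{1}{d}\sum_i\mathbf T^{x,2}_{F_{2,d}}(\mathbf A_i,\mathbf A_i)$ via Remark \ref{rem:c_d} (this is the paper's Lemma \ref{short expression for I2}), then expand by multilinearity and the recursion of Definition \ref{def:bT}, stripping the rightmost $\mathbf D_i$ with \eqref{eq:def bT 3} and interior ones with \eqref{eq:def bT 2}. Your bookkeeping of the hardest term, $\mathbf T^{x,2}_{F_{2,d}}(x\mathbf D_i,x\mathbf D_i)$, including the doubled $T^x_{F_{2,d}^{[4]}}(x,x,D_ix,D_ix)$ and the $T^x_{F_{2,d}^{[3]}}(x,x,D_i^2x)$ that sums to the $\Delta x$ term, matches the paper's computation exactly.
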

Deriving the above formula from our main theorem is quite straightforward; Section \ref{sct:k=2} contains an explicit proof for convenience of the reader.

In fact, the same can be done for any order $k$ in a simple manner.
\begin{cor}\label{main thm k=k}
For any $d\in\mathbb N_{\geq2}$, $k\in 2\mathbb Z_+$, and $P$ acting in $L_2(\mathbb T^d_\theta)$ of the form \eqref{eq:P} for positive invertible $x$, an expression for the $k^{\text{th}}$ order local invariant $I_k(P)$ can be computed by the accompanying python program (cf. Appendix \ref{appendix A}). This expression consists of a finite amount of terms of the form
$$c\pi^{\frac{d}{2}}T^{x}_{F_{k,d}^{[m]}}(D^{\alpha_1}b_1,\ldots,D^{\alpha_m}b_m),$$
where $m\in\mathbb N$, $c\in\mathbb Q$, $\alpha_j\in\mathbb Z_+^d$ and $b_j\in\{x,a_1,\ldots,a_d,a\}$. E.g., $I_4$ has 1046 terms and $I_6$ has 140845 terms in Einstein notation (i.e., not counting sums over indices $i_j=1,\ldots,d$). 
\end{cor}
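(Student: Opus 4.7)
The plan is to deduce Corollary~\ref{main thm k=k} directly from Theorem~\ref{thm: recursive formula} by iteratively unfolding the recursive definition of $\mathbf T^{x,m}_{F_{k,d}}$ in Definition~\ref{def:bT}. First, using $\mathbf A_i=2x\mathbf D_i+a_i$ and $\mathbf P=x\sum_{i=1}^d\mathbf D_i^2+\sum_{i=1}^d a_i\mathbf D_i+a$ from \eqref{eq:bf Ai and bf P}, I would rewrite each $\mathbf W^{\mathscr A,\iota}_j$ as an integer linear combination of monomials $b\mathbf D^\alpha$ with $b\in\{x,a_1,\ldots,a_d,a\}$ and $|\alpha|_1\leq 2$. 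Multilinearity of $\mathbf T^{x,m}_{F_{k,d}}$, together with the rationality of $c_d^{(\iota)}$ guaranteed by Remark~\ref{rem:c_d}, then presents $I_k(P)$ as a finite $\mathbb Q$-linear combination of $\pi^{d/2}\mathbf T^{x,m}_{F_{k,d}}(b_1\mathbf D^{\alpha_1},\ldots,b_m\mathbf D^{\alpha_m})$.

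Next I would reduce each such term to an actual multiple operator integral by iterated application of \eqref{eq:def bT 2} and \eqref{eq:def bT 3}. At each stage I locate the rightmost position $k^*$ with $\alpha_{k^*}\neq 0$ (so that positions $k^*+1,\ldots,m$ host $C^\infty$-arguments, matching the hypothesis of the rules), factor out one $\mathbf D_i$ from argument $k^*$, and apply \eqref{eq:def bT 3} when $k^*=m$ and \eqref{eq:def bT 2} when $k^*<m$. Termination follows from well-founded induction on the lexicographic measure $(n,\,m-k^*)$, where $n:=\sum_j|\alpha_j|_1$ is the total $\mathbf D$-degree: rule \eqref{eq:def bT 3} and the first two branches of \eqref{eq:def bT 2} strictly decrease $n$, while the third branch of \eqref{eq:def bT 2} keeps $n$ fixed but moves the rightmost $\mathbf D_i$ from position $k^*$ to $k^*+1$, strictly decreasing $m-k^*$. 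When $n=0$ the base clause of Definition~\ref{def:bT} produces $T^x_{F_{k,d}^{[m']}}(c_1,\ldots,c_{m'})$ with each $c_j\in C^\infty(\mathbb T^d_\theta)$.

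Inspecting the rules, every new argument introduced during the recursion is either $D_i x$ (from \eqref{eq:def bT 3} or the first branch of \eqref{eq:def bT 2}) or $D_i b_{k+1}$ (from the second branch), and the initial arguments $b\mathbf D^\alpha$ already have $b\in\{x,a_1,\ldots,a_d,a\}$. Hence every base-case argument is of the form $D^\beta c$ with $c\in\{x,a_1,\ldots,a_d,a\}$ and $\beta\in\mathbb Z_+^d$, as claimed. The accumulated coefficients are integers from the recursion multiplied by the rationals of Step~1, so they lie in $\mathbb Q$.

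The remaining numerical assertion, namely the term counts $1046$ for $k=4$ and $140845$ for $k=6$ in Einstein notation, is a purely combinatorial output of the procedure above and is verified by the accompanying Python implementation described in Appendix~\ref{appendix A}. The only real obstacle is book-keeping: the number of branches in the recursion grows rapidly, so doing it by hand becomes impractical for $k\geq 4$, which is exactly why the explicit enumeration is delegated to a computer. There is no further mathematical content beyond Theorem~\ref{thm: recursive formula} itself.
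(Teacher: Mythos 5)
Your proposal is correct and follows essentially the same route as the paper, which treats this corollary as a straightforward expansion of the recursion in Definition \ref{def:bT} applied to Theorem \ref{thm: recursive formula} (carried out by hand for $k=2$ in Section \ref{sct:k=2} and by the python program in general, which also supplies the term counts). Your explicit lexicographic termination measure and the bookkeeping of the base-case arguments $D^{\beta}c$ merely make precise what the paper leaves implicit (well-definedness of the recursion being Lemma \ref{lem:T well defined}), so there is no substantive difference.
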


For specific $P$ and $k$, the above expressions can yield remarkably elegant results. As an example we shall focus on $k=2$ and the case $P=\lambda_l(x^{1/2})\Delta\lambda_l(x^{1/2})$, which corresponds to the Laplacian `on functions' (see \cite{Connes-Moscovici,FaKh1} for terminology) of the conformally deformed non-commutative 2-torus. In this case (and in fact in the analogous case for every $d\geq2$) we obtain an expression for $I_2(P)$ that is arguably neater than the expressions one finds in the literature (cf. \cite{Connes-Moscovici}).

\begin{thm}\label{cor:main thm k=d=2} Let $d\geq2$ and consider $P=\lambda_l(x^{\frac12})\Delta\lambda_l(x^{\frac12})$ acting in $L_2(\mathbb T^d_\theta)$ for a positive invertible $x\in C^\infty(\mathbb T^d_\theta)$. The second order local invariant of $P$ is given by
$$-\pi^{-\frac{d}{2}}I_2(P)=T^x_{\Phi}(\Delta x)+\sum_{i=1}^dT^x_{\Psi}(D_ix,D_ix),$$
where $\Phi$ and $\Psi$ are expressed in terms of divided differences as
$$\Phi(\alpha_0,\alpha_1)=\frac{2(\alpha_0\alpha_1)^{\frac12}}{d}\cdot\frac{\alpha_0F_{2,d}^{[2]}(\alpha_0,\alpha_0,\alpha_1)-\alpha_1F_{2,d}^{[2]}(\alpha_0,\alpha_1,\alpha_1)}{\alpha_1-\alpha_0},$$
$$\Psi(\alpha_0,\alpha_1,\alpha_2)=-\frac4d
\frac{(\alpha_0\alpha_2)^{\frac12}}{\alpha_1^{2+\frac{d}{2}}}
g^{[3]}\Big(\frac{\alpha_0}{\alpha_1},\frac{\alpha_0}{\alpha_1},\frac{\alpha_2}{\alpha_1},\frac{\alpha_2}{\alpha_1}\Big),\quad  g(\alpha)=F_{2,d}(\alpha)+F_{2,d}^{[1]}(1,\alpha),$$
for all $\alpha_0,\alpha_1,\alpha_2,\alpha>0$.
\end{thm}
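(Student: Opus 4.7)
The plan is to apply Corollary~\ref{cor:main thm k=2} to $P=\lambda_l(x^{1/2})\Delta\lambda_l(x^{1/2})$ and then collapse the resulting sum using systematic MOI manipulations. First, two applications of the Leibniz rule $D_i\lambda_l(y)=\lambda_l(y)D_i+\lambda_l(D_iy)$ put $P$ into the standard form~\eqref{eq:P} with
\[a_i=2x^{1/2}(D_ix^{1/2}),\qquad a=x^{1/2}(\Delta x^{1/2}).\]
Substituting these into Corollary~\ref{cor:main thm k=2} yields an expression for $-\pi^{-d/2}I_2(P)$ as a finite sum of MOIs whose inputs involve $D_lx$, $\Delta x$, $a_i$, $a$, and $D_ja_i$.

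The next step is to eliminate every occurrence of $x^{1/2}$, $D_jx^{1/2}$, and $\Delta x^{1/2}$ inside $a_i$, $a$, $D_ja_i$. For this I use the identities, valid for every smooth $h$ on a neighbourhood of ${\rm spec}(x)$,
\[D_jh(x)=T^x_{h^{[1]}}(D_jx),\qquad D_j^2h(x)=T^x_{h^{[1]}}(D_j^2x)+2T^x_{h^{[2]}}(D_jx,D_jx).\]
The first follows for polynomial $h$ by the derivation property of $D_j$ and extends by density (cf.~\cite[Lemma~14]{vNvS}); the second follows by iterating the first. With $h(\alpha)=\alpha^{1/2}$, these rewrite $D_jx^{1/2}$ and $D_j^2x^{1/2}$ (hence $\Delta x^{1/2}$) as MOIs whose inputs are only $D_lx$. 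Substituting into $a_i$, $a$, $D_ja_i$ produces nested MOIs, which I flatten using two consequences of the separation-of-variables representation~\eqref{eq:sep of variables phi}--\eqref{eq:MOI}: (i) a factor $x^{1/2}$ adjacent to the $k$-th input of an outer MOI $T^x_f$ is absorbed into the symbol by multiplication by $\alpha_{k-1}^{1/2}$ or $\alpha_k^{1/2}$ according to which side it sits on; (ii) an inner MOI $T^x_g(W_1,\ldots,W_\ell)$ plugged into the $k$-th input of $T^x_f$ collapses into a single MOI with $\ell$ new inputs and combined symbol $f\cdot g$, where $g$ is evaluated at the $\ell+1$ consecutive new spectral variables straddling the inserted inputs and $f$ skips the $\ell-1$ introduced middle variables. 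Applying (i) and (ii) to every one of the thirteen summands of Corollary~\ref{cor:main thm k=2} produces
\[-\pi^{-d/2}I_2(P)=T^x_{\widetilde\Phi}(\Delta x)+\sum_{l=1}^d T^x_{\widetilde\Psi}(D_lx,D_lx),\]
for explicit $\widetilde\Phi,\widetilde\Psi$ assembled from products of divided differences of $F_{2,d}$ and of $h(\alpha)=\alpha^{1/2}$.

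The main obstacle is showing that $\widetilde\Phi=\Phi$ and $\widetilde\Psi=\Psi$ in the compact closed forms stated. The $(\alpha_0\alpha_1)^{1/2}$ prefactor in $\Phi$ originates from the two $x^{1/2}$'s bracketing $\Delta x^{1/2}$ in $a$; similarly the $(\alpha_0\alpha_2)^{1/2}$ prefactor in $\Psi$ comes from the outer $x^{1/2}$'s in $a$ and in $a_i$. The rescaled arguments $\alpha_0/\alpha_1,\alpha_2/\alpha_1$ and the $\alpha_1^{-(2+d/2)}$ prefactor of $\Psi$ are produced by the homogeneity of $F_{2,d}$ recorded in Remark~\ref{rem:Fkd}: since $F_{2,d}$ is a pure power of exponent $1-d/2$ for $d\neq 2$ and $\log$ for $d=2$, one has $F_{2,d}^{[n]}(\lambda\alpha_0,\ldots,\lambda\alpha_n)=\lambda^{1-d/2-n}F_{2,d}^{[n]}(\alpha_0,\ldots,\alpha_n)$ for every $n\geq1$. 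The function $g(\alpha)=F_{2,d}(\alpha)+F_{2,d}^{[1]}(1,\alpha)$ organising $\Psi$ emerges by combining the two qualitative sources of $\Psi$-contributions: the $T^x_{h^{[1]}}(\Delta x)$-part of $\Delta x^{1/2}$ appearing in $a$ (giving an $F_{2,d}$-contribution) and both the $T^x_{h^{[2]}}(D_lx,D_lx)$-part of $\Delta x^{1/2}$ and the products $(D_lx^{1/2})(D_lx^{1/2})$ coming from the Leibniz expansion of $D_la_l$ (giving the $F_{2,d}^{[1]}(1,\cdot)$-contribution after using the identity $F_{2,d}^{[1]}(1,\alpha)=(F_{2,d}(\alpha)-F_{2,d}(1))/(\alpha-1)$). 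Verifying that the consolidated symbol collapses precisely to $g^{[3]}(\alpha_0/\alpha_1,\alpha_0/\alpha_1,\alpha_2/\alpha_1,\alpha_2/\alpha_1)$ is the lengthiest but purely algebraic step, resting on repeated use of the divided-difference recursion~\eqref{eq:second id dd} and the symmetry of divided differences.
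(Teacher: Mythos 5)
Your proposal is correct in outline, but it takes a genuinely different route from the paper. The paper never substitutes $a_i=2x^{1/2}(D_ix^{1/2})$, $a=x^{1/2}(\Delta x^{1/2})$ into Corollary \ref{cor:main thm k=2}. Instead it first proves a conjugation property (Proposition \ref{lem:conj property}), $I_2(\lambda_l(y^{-1}x)\Delta\lambda_l(y))=y^{-1}I_2(\lambda_l(x)\Delta)y$ for commuting $x,y$, established at the level of the maps $S^m_{s,z}$ via a bimodule structure on $\mathcal X$ (Lemmas \ref{17 first lemma}--\ref{17 third lemma}); it then applies Corollary \ref{cor:main thm k=2} only to the plain operator $\lambda_l(x)\Delta$, where $a_i=a=0$ and only five terms survive, and takes $y=x^{1/2}$, so that the square roots enter the symbols merely as the homogeneous factor $(\alpha_n/\alpha_0)^{1/2}$ (Lemma \ref{I2 conformal first exact}); the closed forms are then obtained in Lemmas \ref{concrete Phi lemma}--\ref{concrete Psi lemma}. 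Your direct route -- expanding $D_ix^{1/2}$ and $\Delta x^{1/2}$ as operator integrals of $h(\alpha)=\alpha^{1/2}$ and flattening nested integrals by your (correct) absorption and composition rules -- is viable and spares you the conjugation property, but it is substantially heavier: all thirteen terms of Corollary \ref{cor:main thm k=2} are in play, and the intermediate symbols contain divided differences of $\alpha^{1/2}$, e.g.\ $h^{[1]}(\alpha_0,\alpha_1)=(\alpha_0^{1/2}+\alpha_1^{1/2})^{-1}$, which only disappear after cancellations that use $F_{2,d}'(\alpha)=\alpha^{-d/2}$ -- the same relation the paper isolates cleanly in Lemma \ref{17 third lemma} and in the identity $xF_{2,d}'(x)+\tfrac2dx^2F_{2,d}''(x)=0$. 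Accordingly, your bookkeeping is slightly loose: $a=x^{1/2}(\Delta x^{1/2})$ carries only one explicit bracketing $x^{1/2}$, so the prefactor $(\alpha_0\alpha_1)^{1/2}$ does not come from ``two $x^{1/2}$'s'' directly but materialises only after these $h^{[1]}$-factors cancel against the $F_{2,d}$-terms; and the terminal ``purely algebraic'' collapse that you assert but do not carry out is where essentially all of the content of Lemmas \ref{17 first lemma}--\ref{17 third lemma} and \ref{concrete Phi lemma}--\ref{concrete Psi lemma} must be reproduced (a spot check, e.g.\ $d=6$ for the $\Phi$-part, confirms your consolidated symbol does reduce to the stated $\Phi$, so the plan goes through). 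In short: the paper's detour through Proposition \ref{lem:conj property} buys short intermediate formulas and a reusable conjugation statement; your approach buys self-containedness at the cost of a much longer final computation, which as written remains to be done.
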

Multiple operator integrals can also serve as a stepping stone towards the modular functional calculus ubiquitous in the literature since \cite{Cohen-Connes,Connes-Moscovici,Connes-Tretkoff}. Indeed, from the above formula one can derive the most basic main results of \cite{Connes-Moscovici,FaKh1} as a corollary, namely the functions $K_0(s)$ and $H_0(s,t)$ from \cite{Connes-Moscovici}. This derivation is done in Section \ref{sct:Connes-Moscovici}. 

In fact, as our main result holds for arbitrary $d$ and $k$, many more `modular formulas' are now within easy reach. As a quick example, if $k=2$ and $d\geq3$ is arbitrary, then Theorem \ref{thm:K0 arbitrary d} (which can be derived from Theorem \ref{cor:main thm k=d=2} or directly from our main theorem) shows how the function
$$K_0^d(s)=\frac{2}{d}\cdot \frac{-1-e^{(1-\frac{d}{2})s}+\frac{e^{\left(1-d/2\right)s}-1}{1-d/2}\coth\left(\frac{s}{2}\right)}{s\sinh\left(\frac{s}{2}\right)}$$
replaces the function $K_0(s)$ of \cite[eq. (2)]{Connes-Moscovici} when passing to arbitrary dimension. Moreover, one immediately recovers the function $K_0(s)$ of \cite{Connes-Moscovici} by taking $d\to2$ in the above formula. 

The recursive formula of Theorem \ref{thm: recursive formula} bears similarity to some of the formulas in \cite{IM1,IM2,IM3}. Indeed, we show how to recover a key result from \cite{IM1} from our main theorem in Theorem \ref{thm:IM}, thus finally bridging the gap between the commutative and non-commutative approaches.


\section{Groundwork}\label{prelim section}

\begin{lem}\label{lem:T well defined}
The map $\mathbf T:\mathcal{X}^{\times n}\to C^\infty(\mathbb T^d_\theta)$ of Definition \ref{def:bT} is well defined.
\end{lem}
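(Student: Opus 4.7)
The claim is that the recursion in Definition \ref{def:bT} unambiguously specifies the value $\mathbf{T}^{x,m}_f(\mathbf{B}_1,\ldots,\mathbf{B}_m) \in C^\infty(\mathbb{T}^d_\theta)$. By multilinearity, it suffices to verify this on monomial inputs $(b_1\mathbf{D}^{\alpha_1},\ldots,b_m\mathbf{D}^{\alpha_m})$, since $\mathcal X$ is the free left $C^\infty(\mathbb T^d_\theta)$-module on $\{\mathbf D^\alpha : \alpha \in \mathbb Z_+^d\}$. I would run a strong induction on the lexicographic complexity measure $\mu := (N,\,m-k^*)$, where $N := \sum_{j=1}^m |\alpha_j|_1$ is the total $\mathbf D$-degree and $k^* \in \{0,1,\ldots,m\}$ is the largest index $j$ with $\alpha_j \neq 0$ (set $k^* = 0$ if all $\alpha_j = 0$).

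First I would observe that, for each monomial input, exactly one of the three cases of Definition \ref{def:bT} applies: the base case if $k^* = 0$, rule \eqref{eq:def bT 3} if $k^* = m$, and rule \eqref{eq:def bT 2} with $k$ forced to equal $k^*$ if $0 < k^* < m$ (the hypothesis in \eqref{eq:def bT 2} that positions past $k$ are plain forces $k = k^*$). Direct inspection then shows that each of the three summands on the right of \eqref{eq:def bT 2}, as well as the right-hand side of \eqref{eq:def bT 3}, strictly decreases $\mu$: the first two summands of \eqref{eq:def bT 2} and \eqref{eq:def bT 3} convert a $\mathbf D_i$-factor into a plain element ($D_ix$ or $D_i b_{k+1}$), reducing $N$; the third summand of \eqref{eq:def bT 2} preserves $N$ but shifts a $\mathbf D_i$-factor one position to the right, strictly decreasing $m-k^*$. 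Hence the recursion terminates.

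The substantive part is confluence. At each step the only combinatorial freedom is the choice of $i$ when decomposing $\mathbf B_{k^*} = b_{k^*}\mathbf D^{\alpha_{k^*}}$ as $(b_{k^*}\mathbf D^{\alpha_{k^*}-e_i})\mathbf D_i$ for some $i$ with $(\alpha_{k^*})_i > 0$. I would argue by strong induction on $\mu$ that the recursion value is independent of this choice. For two distinct valid choices $i \neq j$, unfolding the recursion one further step on each side reduces to the elementary situation $\mathbf B_{k^*} = b\mathbf D_i\mathbf D_j$ (take $k^* = m$ for concreteness): both orderings then produce precisely the three terms $\mathbf T^{x,m+2}_f(\ldots,b,D_ix,D_jx)$, $\mathbf T^{x,m+2}_f(\ldots,b,D_jx,D_ix)$, and $\mathbf T^{x,m+1}_f(\ldots,b,D_iD_jx)$, matching summand by summand thanks to the commutativity $D_iD_j = D_jD_i$ on $C^\infty(\mathbb T^d_\theta)$. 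A diamond lemma argument reduces the general case to this local one.

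The main obstacle is the bookkeeping in the global diamond check: when $k^* < m$, the third summand of \eqref{eq:def bT 2} can \emph{roll} a $\mathbf D_i$-factor through several adjacent plain arguments before it is finally absorbed, generating a cascade of branches in the recursion tree. However, each rolling step is strictly local, and the pairwise confluence at every branching node---together with the commutativity of the $D_i$'s---yields global confluence by the standard diamond lemma. Once well-definedness is established on monomials, multilinear extension produces the desired map $\mathbf T^{x,m}_f : \mathcal X^{\times m} \to C^\infty(\mathbb T^d_\theta)$.
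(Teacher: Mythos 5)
Your overall architecture is sound and is essentially the paper's own strategy: reduce to monomial inputs, run a strong induction on a termination measure, observe that the only genuine ambiguity is which index $i$ is peeled off the rightmost non-plain slot, and resolve that ambiguity by comparing the expansions of $\mathbf T^{x,n}_f(\ldots,\mathbf B_k\mathbf D_i\mathbf D_j,b_{k+1},\ldots,b_n)$ and $\mathbf T^{x,n}_f(\ldots,\mathbf B_k\mathbf D_j\mathbf D_i,b_{k+1},\ldots,b_n)$ using $D_iD_j=D_jD_i$. The gap is at the one step that carries the content: the claim that the confluence check ``reduces to the elementary situation $\mathbf B_{k^*}=b\mathbf D_i\mathbf D_j$ (take $k^*=m$ for concreteness)''. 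When $k^*<m$ this is not what happens. The two first-step expansions via \eqref{eq:def bT 2} (peel $i$ first versus peel $j$ first) do not match summand by summand, and they do not collapse to the three-term identity of the last-slot case after one further unfolding: each of $D_i$ and $D_j$ must be propagated through \emph{all} trailing plain arguments $b_{k^*+1},\ldots,b_m$, producing terms in which $D_i$ and $D_j$ land on two different $b_l$'s, on one $b_l$ and an inserted $D_jx$ (or $D_ix$), or jointly on the same $b_l$ or the same inserted $x$ --- configurations that simply do not occur in the elementary $k^*=m$ picture. Already for $\mathbf T^{x,2}_f(b\mathbf D_i\mathbf D_j,c)$ each of the two expansions has thirteen base terms, five of which involve $D_ic$, $D_jc$ or $D_iD_jc$. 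The equality of the two sides is then a bijection of this larger family of terms under $i\leftrightarrow j$, with the doubly-hit terms invariant by $D_iD_j=D_jD_i$; this classification of where the two derivatives can land is precisely what the paper's proof records, and it is the part your proposal asserts (``a diamond lemma argument reduces the general case to this local one'') rather than proves.

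A secondary, more cosmetic point: the ``standard diamond lemma'' (Newman) is stated for rewriting in which one object is replaced by one object, whereas here a single $\mathbf T$-expression is replaced by a \emph{sum} of $\mathbf T$-expressions, so it does not apply off the shelf; one would need a linear (Bergman-type) version, or, better, no diamond lemma at all. Your strong induction on $\mu$ already does the job: once the induction hypothesis makes every value reachable after the first peel unambiguous, well-definedness follows from the single verification that the two first-step expansions agree --- i.e.\ exactly the $k^*<m$ computation described above, which still needs to be carried out (or at least structured, as in the paper, by listing the possible landing patterns of $D_i$ and $D_j$ and exhibiting the $i\leftrightarrow j$ bijection).
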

\begin{proof}
It suffices to show that the expression defining
\begin{align}\label{eq:weldefinedness LHS}\mathbf T^{x,n}_{f}(\mathbf B_1,\ldots,\mathbf B_{k-1},\mathbf{B}_k\mathbf{D}_i\mathbf{D}_j,b_{k+1},\ldots,b_n)
\end{align}
equals the expression defining
\begin{align}\label{eq:weldefinedness RHS}
\mathbf T^{x,n}_{f}(\mathbf B_1,\ldots,\mathbf B_{k-1},\mathbf{B}_k\mathbf{D}_j\mathbf{D}_i,b_{k+1},\ldots,b_n).
\end{align}
By induction, one can show that \eqref{eq:weldefinedness LHS} is equal to a long expression involving $D_i$ and $D_j$ occuring in the arguments after $\mathbf B_1,\ldots,\mathbf B_k$ in one of the following forms 
$$\ldots,D_ix,\ldots,D_jx,\ldots,\qquad\ldots,D_jx,\ldots,D_ix,\ldots,$$
$$\ldots,D_ib_l,\ldots,D_jx,\ldots,\qquad\ldots,D_jb_l,\ldots,D_ix,\ldots,$$
$$\ldots,D_ix,\ldots,D_jb_m,\ldots,\qquad\ldots,D_jx,\ldots,D_ib_m,\ldots,$$
$$\ldots,D_ib_l,\ldots,D_jb_m,\ldots,\qquad\ldots,D_jb_l,\ldots,D_ib_m,\ldots,$$
$$\ldots,D_iD_jx,\ldots,\qquad\ldots,D_iD_jb_l,\ldots,$$
where the dots signify the list of other arguments $b_{k+1},\ldots,b_n$, cut up at arbitrary places. One sees that the first 8 instances are in bijection with one another after swapping $i$ and $j$. The last 2 instances are invariant under swapping $i$ and $j$ because $D_iD_j=D_jD_i$. Hence \eqref{eq:weldefinedness LHS} is equal to \eqref{eq:weldefinedness RHS}.
\end{proof}

\subsection{The results of our companion paper}\label{defnot section}
In our companion paper \cite{SZ-local} the existence of the asymptotic expansion was proven for the present general class of operators $P$ (in fact, for an even more general assumption on the scalar symbol), and a formula was given for $I_k(P)$. However, this formula was not explicit.

As in \cite{SZ-local}, this formula is stated as a definition of $I_k(P)$ for all $P$ of the form \eqref{eq:P} with $x$ self-adjoint and invertible. If $P$ is in addition self-adjoint, this definition of $I_k(P)$ coincides with the definition in the introduction (see Theorem \ref{companion thm} below).
\begin{defi}
For $s\in\mathbb{R}^d,$ we define
\begin{align*}
V(s)&:=\sum_{i=1}^ds_iA_i,\\
A_i&:=2\lambda_l(x)D_i+\lambda_l(a_i),\quad 1\leq i\leq d,
\end{align*}
as linear operators $C^\infty(\mathbb T^d_\theta)\to L_2(\mathbb T^d_\theta)$ acting (densely) in $L_2(\mathbb T^d_\theta)$.
\end{defi}
As $x$ is positive and invertible, $x|s|^2+z\in C^{\infty}(\mathbb{T}^d_{\theta})$ is invertible in $L_{\infty}(\mathbb{T}^d_{\theta})$ for every $z\in\mathbb{C}\backslash\mathbb{R}_-.$ As the $C^{\infty}(\mathbb{T}^d_{\theta})$ is stable under holomorphic functional calculus, we have $(x|s|^2+z)^{-1}\in C^{\infty}(\mathbb{T}^d_{\theta})$.

%
\begin{defi}\label{fourth resolvent nota} Let $\mathscr{A}\subseteq\mathbb{N}.$ For every $z\in\mathbb{C}\backslash\mathbb{R}_-$ and every $s\in\mathbb{R}^d,$ set
$f_0^{\mathscr{A}}(s,z):=1$ and
$$f_m^{\mathscr{A}}(s,z):=W^\mathscr{A}_j(s)\Big(\frac{1}{x|s|^2+z}f_{m-1}^{\mathscr{A}}(s,z)\Big),\quad m\geq1,$$
where (cf. \eqref{eq:bW})
\begin{align}\label{eq:W notation}
W^\mathscr{A}_j(s):=
\begin{cases}
V(s)\quad &(j\in\mathscr{A});\\
P\quad &(j\notin\mathscr{A}).
\end{cases}
\end{align}
\end{defi}

\begin{defi}\label{fifth resolvent nota}
For every $z\in\mathbb{C}\backslash\mathbb{R}_-$ every $s\in\mathbb{R}^d$ and every $k\in\mathbb{Z}_+$ we set
$${\rm corr}_k(s,z):=(x|s|^2+z)^{-1}\sum_{\frac{k}{2}\leq m\leq k}(-1)^m\sum_{\substack{\mathscr{A}\subseteq\{1,\cdots,m\}\\ |\mathscr{A}|=2m-k}}f_m^{\mathscr{A}}(s,z).$$
\end{defi}

\begin{defi}\label{first exponential notation} For every $s\in\mathbb{R}^d$ and every $k\in\mathbb{Z}_+$ we set
$${\rm Corr}_k(s):=\frac1{2\pi}\int_{-\infty}^\infty{\rm corr}_k(s,i\lambda)e^{i\lambda}\,d\lambda.$$
Here and throughout this paper, $\int_{-\infty}^\infty:=\lim_{N\to\infty}\int_{-N}^N$. In the case above, the limit is with respect to the weak operator topology. The distinction between $\int_{-\infty}^\infty$ and the Lebesgue integral $\int_{\mathbb R}$ is only relevant in the case $k=0$.
\end{defi}
\begin{defi}\label{ik notation} For every $k\in\mathbb{Z}_+,$ we define
$$I_k(P):=\int_{\mathbb{R}^d}{\rm Corr}_k(s)\,ds,$$
as a weak integral in $L_\infty(\mathbb T^d_\theta)$.
\end{defi}

Theorem 1.2 in \cite{SZ-local} asserts the following.
\begin{thm}\label{companion thm} If $P$ is self-adjoint acting in $L_2(\mathbb T^d_\theta)$ of the form \eqref{eq:P} for $x$ positive and invertible, then \eqref{eq:asymptotic expansion} holds with $\{I_k(P)\}_{k\geq0}$ as in Notation \ref{ik notation}, and $I_k(P)\in C^\infty(\mathbb T^d_\theta)$ for all $k\geq0$.
\end{thm}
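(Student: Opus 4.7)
My plan is to prove Theorem \ref{companion thm} via a pseudodifferential-style expansion of the resolvent combined with an inverse-Laplace representation of the heat semigroup, followed by trace-ideal estimates on the remainder. The starting point is the identity $e^{-A}=\frac{1}{2\pi}\int_{\mathbb R}(A+i\lambda)^{-1}e^{i\lambda}\,d\lambda$ for a positive self-adjoint operator $A$, which rewrites $e^{-tP}$ as an integral of resolvents of $P$. Combining this with the Fourier decomposition on $L_2(\mathbb T^d_\theta)$ in the basis $\{U^n\}_{n\in\mathbb Z^d}$ and applying Poisson summation converts the resulting sum over $\mathbb Z^d$ into an integral over a continuous variable $s\in\mathbb R^d$, with the non-zero frequencies contributing only an $O(t^\infty)$ remainder.

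The next step is to expand the conjugated resolvent. Conjugating $P$ by translation in the Fourier direction sends $D_i\mapsto D_i+s_i$, so the conjugated operator takes the form $\lambda_l(x)|s|^2+V(s)+(\text{bounded})$ with $V(s)=\sum_i s_iA_i$. A Neumann-series expansion of the conjugated resolvent around the ``free'' part $(x|s|^2+z)^{-1}$ (left multiplication by this smooth element of $C^\infty(\mathbb T^d_\theta)$) yields precisely the iterated products $f_m^{\mathscr A}(s,z)$ of Definition \ref{fourth resolvent nota}: at stage $m$, the $j$-th factor is either $V(s)$ (if $j\in\mathscr A$) or $P$ (if $j\notin\mathscr A$), and the $(-1)^m$ signs come from the alternating Neumann series. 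Rescaling $s\to s/\sqrt t$ and $\lambda\to\lambda/t$ then makes all $t$-dependence explicit: the Jacobian gives $t^{-d/2}$, each $V(s)$ gains a $t^{1/2}$ since $V$ is linear in $s$, and each free resolvent rescales via $(x|s|^2/t+i\lambda/t)^{-1}=t(x|s|^2+i\lambda)^{-1}$. Grouping terms with $|\mathscr A|=2m-k$ collects precisely the prefactor $t^{(k-d)/2}$, and the residual $\lambda$- and $s$-integrals reproduce $\mathrm{Corr}_k(s)$ and $\tau(yI_k(P))$ as in Definitions \ref{first exponential notation} and \ref{ik notation}.

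The smoothness claim $I_k(P)\in C^\infty(\mathbb T^d_\theta)$ then follows by differentiating under the integral signs in Definition \ref{ik notation}: stability of $C^\infty(\mathbb T^d_\theta)$ under holomorphic functional calculus gives $(x|s|^2+i\lambda)^{-1}\in C^\infty(\mathbb T^d_\theta)$ for every $s,\lambda$, and repeated application of $D_i$ to any summand of $f_m^{\mathscr A}$ yields another smooth element retaining sufficient polynomial decay in $s$ and $\lambda$ to justify interchanging derivatives with the integrations.

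The main obstacle is the quantitative remainder estimate: after truncating the Neumann series at order $k$, one must bound the trace-norm of the error by $O(t^{(k'-d)/2})$ for arbitrarily large $k'>k$. This requires uniform Schatten-ideal estimates on the iterated resolvent products $f_m^{\mathscr A}(s,z)$ in $s$ and $\lambda$, together with delicate commutator manipulations to transport the $D_i$'s through resolvent factors without losing smoothness or decay. The strong ellipticity assumption on $P$ (positivity and invertibility of $x$) is what makes these estimates possible, and this is the technical heart of the argument.
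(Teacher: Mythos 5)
First, a structural point: the paper you were given does not actually prove Theorem \ref{companion thm}; it is imported verbatim from the companion paper \cite{SZ-local} (Theorem 1.2 there), and the present paper only uses it as input for its own computations. So there is no in-paper proof to compare against step by step. That said, your outline is consistent with the strategy that the imported definitions encode: conjugating by the unitaries of right multiplication by $U^n$ does send $D_i\mapsto D_i+n_i$ and turns $P$ into $\lambda_l(x)|n|^2+V(n)+P$, the Neumann expansion of the resolvent around $(x|s|^2+z)^{-1}$ produces exactly the iterated products $f_m^{\mathscr A}$ of Definition \ref{fourth resolvent nota}, and your parabolic-scaling count ($t^{1/2}$ per $V$-factor, $t$ per $P$-factor, so the constraint $|\mathscr A|=2m-k$ collects the prefactor $t^{(k-d)/2}$) is the correct bookkeeping behind Definitions \ref{fifth resolvent nota}--\ref{ik notation}.

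As a proof, however, there is a genuine gap, which you partly concede yourself: everything that makes the theorem nontrivial is deferred. You do not establish that $e^{-tP}$ is trace class, nor the uniform (in $s$, $\lambda$, $t$) trace-norm estimates on the remainder after truncating the Neumann series that would give an $O(t^{(k'-d)/2})$ error, nor the justification of replacing the sum over $\mathbb Z^d$ by the integral over $\mathbb R^d$ for an operator-valued summand; your appeal to Poisson summation with an ``$O(t^\infty)$'' contribution from non-zero frequencies is asserted rather than proved, and this comparison step is also where the absence of odd-order coefficients in \eqref{eq:asymptotic expansion} has to be accounted for. The smoothness argument is likewise too quick: for $k=0$ the $\lambda$-integral in Definition \ref{first exponential notation} is only conditionally convergent (the paper emphasizes the distinction between $\int_{-\infty}^\infty$ and $\int_{\mathbb R}$ precisely in that case), so ``sufficient polynomial decay in $\lambda$'' is not available and one must, e.g., integrate by parts in $\lambda$ or evaluate the $\lambda$-integral explicitly (as in Section \ref{sct:Integral formula S and T}) before differentiating under the $s$-integral. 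These estimates are the technical content of \cite{SZ-local}; without them your text is a plausible roadmap rather than a proof of the statement.
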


In the next three sections we will rewrite the above definition into a computable formula for $I_k(P)$, and thus prove our main theorem.

\section{Recursion at the level of symbols}\label{sct:initial}

Recall that  
$\mathcal{X}:= {\rm span}\big\{b\mathbf{D}^\alpha~\big|~b\in C^\infty(\mathbb{T}^d_\theta),\alpha\in\mathbb Z_+^d\big\}$, a free $C^\infty(\mathbb T^d_\theta)$-module. Similarly, let $X:=\big\{\lambda_l(b)D^\alpha:C^\infty(\mathbb T^d_\theta)\to C^\infty(\mathbb T^d_\theta)~\big|~b\in C^\infty(\mathbb T^d_\theta),\alpha\in\mathbb Z_+^d\big\}$ be the $C^\infty(\mathbb{T}^d_\theta)$-module generated by the operators $D^\alpha=D_1^{\alpha_1}\cdots D_d^{\alpha_d}$, $\alpha\in\mathbb Z_+^d$, seen here simply as linear functions from $C^\infty(\mathbb{T}^d_\theta)$ to $C^\infty(\mathbb{T}^d_\theta)$. 
We define a $C^\infty(\mathbb T^d_\theta)$-module homomorphism $\pi:\mathcal{X}\to X$ by linear extension of 
\begin{align}\label{eq:def pi}
\pi(b\bD^\alpha):=\lambda_l(b)D^\alpha.
\end{align}
Fix a positive invertible $x\in C^\infty(\mathbb T^d_\theta)$. We define multilinear mappings $S_{s,z}^{m}:\mathcal{X}^{\times m}\to C^\infty(\mathbb{T}^d_\theta)$ for every $m\in\mathbb{N}$, $s\in\mathbb R^d\setminus\{0\}$ and $z\in \mathbb C\setminus\mathbb R_-$ by
\begin{align}\label{eq:def S^m_sz}
&S_{s,z}^{m}(\mathbf{B}_1,\ldots,\mathbf{B}_m):=\\
(-1)^m&|s|^{2m}\frac{1}{x|s|^2+z}\pi(\mathbf{B}_1)\Big(\frac{1}{x|s|^2+z}\cdots \pi(\mathbf{B}_m)\Big(\frac{1}{x|s|^2+z}\Big)\cdots\Big),\nonumber
\end{align}
for all $\mathbf{B}_1,\ldots,\mathbf{B}_m\in\mathcal{X}$. The above expression is well-defined because $(x|s|^2+z)^{-1}\in C^\infty(\mathbb T^d_\theta)$ and elements of $X$ preserve $C^\infty(\mathbb T^d_\theta)$. In the following subsection we show that $S_{s,z}^m$ satisfies the same recursive properties as $\mathbf T^{x,m}_{f}$ does by Definition \ref{def:bT} (i.e., \eqref{eq:def bT 2} and \eqref{eq:def bT 3}). Relating $S_{s,z}^{n}(b_1,\ldots,b_n)$ to $\mathbf T^{x,n}_{f}(b_1,\ldots,b_n)$ for $b_i\in C^\infty(\mathbb T^d_\theta)$ (i.e., relating the two base cases of the respective recursions) involves some heavy analysis, and is done in Section \ref{sct:Integral formula S and T}.


\subsection{Recursive formula for $S_{s,z}$}

The following two lemmas show how expressions of the form $S_{s,z}^{m}(\mathbf{B}_1,\ldots,\mathbf{B}_m)$ (where $\mathbf{B}_i\in\mathcal{X}$) can be rewritten in terms of expressions of the form $S_{s,z}^{n}(b_1,\ldots,b_n)$, where $b_i\in C^\infty(\mathbb{T}^d_\theta)$ and $n\geq m$.

\begin{lem}\label{zeroth workhorse lemma}
Let $k,m\in\mathbb N$, $k<m$, $\mathbf B_1,\ldots,\mathbf B_k\in\mathcal{X}$, $b_{k+1},\ldots,b_m\in C^\infty(\mathbb T_\theta^d)\subseteq\mathcal{X}$, and $i\in\{1,\ldots,d\}$. For all $s\in\mathbb R^d\setminus\{0\}$ and $z\in \mathbb C\setminus\mathbb R_-$ we have
\begin{align*}
S_{s,z}^m(\mathbf B_1,\ldots,\mathbf B_{k-1},\mathbf{B}_k\mathbf{D}_i,b_{k+1},\ldots,b_m)&=S_{s,z}^{m+1}(\mathbf B_1,\ldots,\mathbf B_{k},D_ix,b_{k+1},\ldots,b_m)\\
\nonumber&\,\,+S_{s,z}^m(\mathbf B_1,\ldots,\mathbf{B}_k,D_ib_{k+1},b_{k+2},\ldots,b_m)\\
&\,\,+S_{s,z}^m(\mathbf B_1,\ldots,\mathbf{B}_k,b_{k+1}\mathbf D_i,b_{k+2},\ldots,b_m).
\end{align*}
\end{lem}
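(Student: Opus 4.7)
The plan is to unfold the definition of $S^m_{s,z}$ on both sides and close the recursion using a single application of the Leibniz rule for the derivation $D_i$ together with the standard derivative-of-a-resolvent identity. No analytical subtleties arise; the argument is purely algebraic inside $C^\infty(\mathbb T^d_\theta)$.

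First I will set $R:=(x|s|^2+z)^{-1}\in C^\infty(\mathbb T^d_\theta)$ and use that $\pi$ is a $C^\infty(\mathbb T^d_\theta)$-module homomorphism with $\pi(b\mathbf D^\alpha)=\lambda_l(b)D^\alpha$, whence $\pi(\mathbf B_k\mathbf D_i)=\pi(\mathbf B_k)D_i$. Because the hypothesis $b_{k+1},\ldots,b_m\in C^\infty(\mathbb T^d_\theta)$ forbids any formal symbols $\mathbf D_j$ after position $k$, the portion of the LHS lying to the right of this $D_i$ collapses to a plain product $R\,b_{k+1}\,Z\in C^\infty(\mathbb T^d_\theta)$, where $Z:=R\,b_{k+2}\,R\cdots R\,b_m\,R$. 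Thus the LHS equals
\[
(-1)^m|s|^{2m}\,R\,\pi(\mathbf B_1)\bigl\{R\cdots R\,\pi(\mathbf B_k)\bigl\{D_i(R\,b_{k+1}\,Z)\bigr\}\bigr\}.
\]

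The central computation is to expand
\[
D_i(R\,b_{k+1}\,Z)=(D_iR)\,b_{k+1}\,Z+R\,(D_ib_{k+1})\,Z+R\,b_{k+1}\,(D_iZ)
\]
by the Leibniz rule, and to use $D_iR=-R\,(|s|^2D_ix)\,R=-|s|^2\,R\,(D_ix)\,R$, which comes from $D_i(A^{-1})=-A^{-1}(D_iA)A^{-1}$ applied to $A=x|s|^2+z$. Matching term by term: the first term gains an extra factor $-|s|^2$ and inserts $R(D_ix)R$ between positions $k$ and $k+1$, which converts the prefactor $(-1)^m|s|^{2m}$ into $(-1)^{m+1}|s|^{2(m+1)}$ and produces exactly $S^{m+1}_{s,z}(\mathbf B_1,\ldots,\mathbf B_k,D_ix,b_{k+1},\ldots,b_m)$; the second term is immediately $S^m_{s,z}(\mathbf B_1,\ldots,\mathbf B_k,D_ib_{k+1},b_{k+2},\ldots,b_m)$; and the third, after the identification $\lambda_l(b_{k+1})D_i=\pi(b_{k+1}\mathbf D_i)$, is $S^m_{s,z}(\mathbf B_1,\ldots,\mathbf B_k,b_{k+1}\mathbf D_i,b_{k+2},\ldots,b_m)$.

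I do not expect a real obstacle. The only thing that needs attention is the sign-and-factor bookkeeping when the resolvent derivative contributes the extra $-|s|^2$ and the new $R(D_ix)R$; once this is done, the three terms line up exactly with the three on the right-hand side of the lemma.
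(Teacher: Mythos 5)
Your proposal is correct and follows essentially the same route as the paper: unfold the definition of $S^m_{s,z}$, apply the Leibniz rule for $D_i$ to the tail $R\,b_{k+1}\,R\cdots b_m\,R$, and use $D_i\big((x|s|^2+z)^{-1}\big)=-|s|^2(x|s|^2+z)^{-1}(D_ix)(x|s|^2+z)^{-1}$ to identify the three resulting terms with the three $S$-expressions on the right-hand side. The only cosmetic difference is that the paper phrases the bookkeeping by factoring $S^m_{s,z}$ through $S^{m-k}_{s,z}$ of the tail, which your prefactor argument handles equivalently.
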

\begin{proof} 
We first note that from $D_i(u\cdot u^{-1})=0$ it follows that
$$D_i(u^{-1})=-u^{-1}\cdot D_iu\cdot u^{-1}.$$
Thusly, we obtain
$$D_i\Big(\frac{1}{x|s|^2+z}\Big)=-|s|^2\frac{1}{x|s|^2+z}(D_ix)\frac{1}{x|s|^2+z}.$$
By the latter equality and the Leibniz rule we obtain, for any $k\in \mathbb Z$,
\begin{align*}
\pi(\bD_i)&\Big(\frac{1}{x|s|^2+z} b_{k+1}\frac{1}{x|s|^2+z}\cdots b_m\frac{1}{x|s|^2+z}\Big)\\
&=D_i\Big(\frac{1}{x|s|^2+z} b_{k+1}\frac{1}{x|s|^2+z}\cdots b_m\frac{1}{x|s|^2+z}\Big)\\
&=-|s|^2\frac{1}{x|s|^2+z}D_ix\frac{1}{x|s|^2+z}b_{k+1}\frac{1}{x|s|^2+z}\cdots b_n\frac{1}{x|s|^2+z}\\
&\quad+\frac{1}{x|s|^2+z}D_ib_{k+1}\frac{1}{x|s|^2+z}b_{k+2}\frac{1}{x|s|^2+z}\cdots b_n\frac{1}{x|s|^2+z}\\
&\quad+\frac{1}{x|s|^2+z}b_{k+1}D_i\Big(\frac{1}{x|s|^2+z}b_{k+2}\frac{1}{x|s|^2+z}\cdots b_n\frac{1}{x|s|^2+z}\Big)\\
&=-|s|^2\frac{1}{x|s|^2+z}D_ix\frac{1}{x|s|^2+z}b_{k+1}\frac{1}{x|s|^2+z}\cdots b_n\frac{1}{x|s|^2+z}\\
&\quad+\frac{1}{x|s|^2+z}D_ib_{k+1}\frac{1}{x|s|^2+z}b_{k+2}\frac{1}{x|s|^2+z}\cdots b_n\frac{1}{x|s|^2+z}\\
&\quad+\frac{1}{x|s|^2+z}\pi(b_{k+1}\bD_i)\Big(\frac{1}{x|s|^2+z}b_{k+2}\frac{1}{x|s|^2+z}\cdots b_n\frac{1}{x|s|^2+z}\Big).
\end{align*}
After multiplying both sides by $(-1)^{m-k}|s|^{2(m-k)}$, the above equality becomes
\begin{align}\label{eq:S^{m-k}}
\pi(\mathbf D_i)\Big(S^{m-k}_{s,z}(b_{k+1},\ldots,b_m)\Big)=&S^{m-k+1}_{s,z}(D_ix,b_{k+1},\ldots,b_m)\\
&+S^{m-k}_{s,z}(D_ib_{k+1},b_{k+2},\ldots,b_m)\nonumber\\
&+S^{m-k}_{s,z}(b_{k+1}\mathbf D_i,b_{k+2},\ldots,b_m).\nonumber
\end{align}
As
\begin{align*}
&S_{s,z}^{m}(\mathbf B_1,\ldots,\mathbf B_k,\mathbf B_{k+1},\ldots,\mathbf B_m)\\
&=(-1)^{k}|s|^{2k}\frac{1}{x|s|^2+z}\pi(\mathbf B_1)\Big(\cdots \frac{1}{x|s|^2+z} \pi(\mathbf B_k)\Big(S_{s,z}^{m-k}(\mathbf B_{k+1},\ldots,\mathbf B_m)\Big)\cdots \Big),
\end{align*}
the lemma follows from \eqref{eq:S^{m-k}}.

\end{proof}

\begin{lem}\label{first workhorse lemma} Let $m\in\mathbb{N}$, $\mathbf B_1,\ldots,\mathbf B_m\in\mathcal{X}$ and $i\in\{1,\ldots,d\}$. 
For all $s\in\mathbb R^d\setminus\{0\}$ and $z\in \mathbb C\setminus\mathbb R_-$ we have
$$S_{s,z}^{m}(\mathbf B_1,\ldots,\mathbf B_{m-1},\mathbf B_m\mathbf{D}_i):=S_{s,z}^{m+1}(\mathbf B_1,\ldots,\mathbf B_m,D_ix).$$
\end{lem}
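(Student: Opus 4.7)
The plan is to unwind the definition of $S^m_{s,z}$ at its innermost (rightmost) slot. The distinguishing feature of this lemma, compared with Lemma \ref{zeroth workhorse lemma}, is that $\mathbf{B}_m\mathbf{D}_i$ is the \emph{last} argument, so there are no further arguments on the right on which $D_i$ can fall. In the Leibniz expansion one therefore sees only one surviving term, namely the one where $D_i$ differentiates the remaining resolvent $(x|s|^2+z)^{-1}$. This is why the conclusion has a single term on the right, as opposed to three.

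Concretely, I would first note that $\pi$ respects the algebra structure on the $\mathbf{D}$'s, so that writing $\mathbf{B}_m=b_m\mathbf{D}^{\alpha}$ gives $\pi(\mathbf{B}_m\mathbf{D}_i)=\lambda_l(b_m)D^{\alpha}D_i=\pi(\mathbf{B}_m)\circ D_i$. Then I recall (as in the proof of Lemma \ref{zeroth workhorse lemma}) that $D_i(u^{-1})=-u^{-1}(D_iu)u^{-1}$ and hence
\[
D_i\Big(\frac{1}{x|s|^2+z}\Big)=-|s|^2\frac{1}{x|s|^2+z}(D_ix)\frac{1}{x|s|^2+z}.
\]
Applying $\pi(\mathbf{B}_m\mathbf{D}_i)$ to $(x|s|^2+z)^{-1}$ therefore yields
\[
\pi(\mathbf{B}_m\mathbf{D}_i)\Big(\frac{1}{x|s|^2+z}\Big)=-|s|^2\,\pi(\mathbf{B}_m)\Big(\frac{1}{x|s|^2+z}(D_ix)\frac{1}{x|s|^2+z}\Big).
\]
Since $D_ix\in C^\infty(\mathbb{T}^d_\theta)$, we have $\pi(D_ix)=\lambda_l(D_ix)$, which is simply left multiplication, so the right-hand side is exactly $-|s|^2\pi(\mathbf{B}_m)\bigl(\frac{1}{x|s|^2+z}\pi(D_ix)(\frac{1}{x|s|^2+z})\bigr)$.

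Finally I substitute this identity into the definition \eqref{eq:def S^m_sz} of $S^m_{s,z}(\mathbf{B}_1,\ldots,\mathbf{B}_{m-1},\mathbf{B}_m\mathbf{D}_i)$ in the innermost slot, and collect the prefactors $(-1)^m|s|^{2m}\cdot(-|s|^2)=(-1)^{m+1}|s|^{2(m+1)}$, which is precisely the prefactor appearing in $S^{m+1}_{s,z}$. The resulting nested expression matches the definition of $S^{m+1}_{s,z}(\mathbf{B}_1,\ldots,\mathbf{B}_m,D_ix)$ verbatim, completing the proof.

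I do not anticipate any real obstacle here; the argument is a direct computation that is in fact a special case (the terminal one) of the Leibniz expansion used in Lemma \ref{zeroth workhorse lemma}. The only thing worth being careful about is that $\pi(\mathbf{B}_m\mathbf{D}_i)$ must be interpreted as $\pi(\mathbf{B}_m)\circ D_i$ (composition of differential operators), which follows from the definition \eqref{eq:def pi} of $\pi$ on monomials.
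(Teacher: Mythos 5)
Your proof is correct and follows exactly the route the paper intends: the paper's own proof is just the remark that this is an easier version of the proof of Lemma \ref{zeroth workhorse lemma}, and your argument is precisely that computation written out — $D_i$ falls only on the innermost resolvent, the identity $D_i\big((x|s|^2+z)^{-1}\big)=-|s|^2(x|s|^2+z)^{-1}(D_ix)(x|s|^2+z)^{-1}$ produces the single new factor, and the prefactor $(-1)^m|s|^{2m}\cdot(-|s|^2)=(-1)^{m+1}|s|^{2(m+1)}$ matches the definition of $S^{m+1}_{s,z}$.
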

\begin{proof} This is an easier version of the proof of Lemma \ref{zeroth workhorse lemma}.
\end{proof}

\section{Analytical results on multiple operator integrals}
\label{sct:Integral formula S and T}

The purpose of this section is to prove the following theorem.
\begin{thm}\label{ik base of recursion} Let $x\in C^\infty(\mathbb T^d_\theta)$ be positive and invertible. For every $n\in\mathbb N$, $b_1,\ldots,b_n\in C^{\infty}(\mathbb{T}^d_\theta)$, and every $k\in 2\mathbb{Z}_+$ such that $2n-k\geq0$, we have
$$\frac{1}{2\pi}\int_{\mathbb{R}^d}\Big(\int_{-\infty}^\infty|s|^{-k}S^n_{s,i\lambda}(b_1,\ldots,b_n)e^{i\lambda}d\lambda\Big)ds=(-1)^{\frac{k}{2}}\pi^{\frac{d}{2}}\cdot T^{x}_{F_{k,d}^{[n]}}(b_1,\ldots,b_n),$$
where $F_{k,d}$ is any $\frac{k}{2}^{\text{th}}$ primitive of $\alpha\mapsto\alpha^{-\frac{d}{2}}$.
\end{thm}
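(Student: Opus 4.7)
My plan is to reduce the double integral in two stages. First, at fixed $s\neq 0$, I would rewrite the inner $\lambda$-integral as a multiple operator integral with a Gaussian symbol in $x$. Second, I would integrate that symbol in $s$ at the symbol level and recognise the result as $(-1)^{k/2}\pi^{d/2}$ times the $n$-th divided difference of $F_{k,d}$; lifting this pointwise identity of symbols to multiple operator integrals via Definition \ref{def:MOI} then yields the theorem.

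For the first stage, I would use the Laplace representation $\tfrac{1}{x|s|^2+i\lambda}=\int_0^\infty e^{-u(x|s|^2+i\lambda)}\,du$, valid because ${\rm spec}(x)\subseteq(0,\infty)$, to expand each of the $n+1$ resolvent factors in $S^n_{s,i\lambda}$. Upon interchanging the $\lambda$-integration with the resulting $(n+1)$-fold Laplace integration, the $\lambda$-integral $\tfrac{1}{2\pi}\int e^{i\lambda(1-\sum_j u_j)}\,d\lambda$ produces $\delta(1-\sum_j u_j)$ and collapses the $u$-integration to the standard simplex $S^n$. Comparing with the Hermite--Genocchi identity
\[
g_s^{[n]}(\alpha_0,\ldots,\alpha_n)=(-|s|^2)^n\int_{S^n}e^{-|s|^2\sum_j u_j\alpha_j}\,du,\qquad g_s(\alpha):=e^{-|s|^2\alpha},
\]
the prefactor $(-1)^n|s|^{2n}$ in the definition of $S^n_{s,i\lambda}$ cancels, and Definition \ref{def:MOI} recognises the resulting expression as $T^x_{g_s^{[n]}}(b_1,\ldots,b_n)$.

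For the second stage, I would evaluate $\int_{\mathbb R^d}|s|^{-k}g_s^{[n]}(\alpha_0,\ldots,\alpha_n)\,ds$ pointwise in the $\alpha_j$. Inserting the Hermite--Genocchi representation of $g_s^{[n]}$ and passing to polar coordinates in $s$ gives
\[
(-1)^n\,{\rm vol}(\mathbb S^{d-1})\int_{S^n}\!\int_0^\infty r^{2n-k+d-1}e^{-r^2\sum_j u_j\alpha_j}\,dr\,du.
\]
The assumption $2n\geq k$, together with $d\geq2$, guarantees $2n-k+d>0$, so the radial integral converges at $r=0$ and equals $\tfrac12\Gamma(n-\tfrac k2+\tfrac d2)(\sum_j u_j\alpha_j)^{-(n-k/2+d/2)}$. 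Using only that $F_{k,d}^{(k/2)}(\alpha)=\alpha^{-d/2}$, direct differentiation gives $F_{k,d}^{(n)}(\alpha)=(-1)^{n-k/2}\tfrac{\Gamma(d/2+n-k/2)}{\Gamma(d/2)}\alpha^{-(n-k/2+d/2)}$ for all $n\geq k/2$, so applying Hermite--Genocchi in reverse (noting that divided differences of order $n$ are unaffected by polynomial corrections of degree $<n$, which renders the choice of primitive immaterial) identifies the simplex integral with $\tfrac{(-1)^{n-k/2}\Gamma(d/2)}{\Gamma(d/2+n-k/2)}F_{k,d}^{[n]}$. Inserting ${\rm vol}(\mathbb S^{d-1})=2\pi^{d/2}/\Gamma(d/2)$, all gamma factors cancel and the signs combine to $(-1)^{k/2}$, yielding the desired symbol identity.

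The main obstacle will be the rigorous justification of the various interchanges of integration. For $n\geq 1$, the resolvent bound $\|(x|s|^2+i\lambda)^{-1}\|\leq|\lambda|^{-1}$ yields $\|S^n_{s,i\lambda}(b_1,\ldots,b_n)\|=O(|\lambda|^{-(n+1)})$, so the $\lambda$-integral is absolutely convergent; the only case of conditional convergence, $n=0$ with $k=0$, reduces to the standard identity $\tfrac{1}{2\pi}\int e^{i\lambda}(a+i\lambda)^{-1}d\lambda=e^{-a}$ for $a>0$ and is handled directly. The more delicate point is swapping the $\lambda$-integral with the $(n+1)$-fold unbounded Laplace $u$-integral of the first stage, which is not covered by Fubini in the obvious way; I would handle it either by truncating in $u$ and passing to a limit, or, more cleanly, by first establishing the resulting identity for commuting scalar $\alpha_j>0$ and then invoking the functorial independence of $T^x_\phi$ from the particular separation-of-variables representation \eqref{eq:sep of variables phi} of $\phi$ (Lemma 4.3 of \cite{ACDS}).
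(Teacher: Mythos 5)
Your two-stage plan reaches the same intermediate identity as the paper but by different means at both stages, and the symbol computations you propose are correct. For the inner $\lambda$-integral, the paper sidesteps the Laplace/delta-function manipulation that you flag as delicate: it notes that the product of resolvents is, up to the factor $(-1)^n|s|^{2n}$, the $n$-th divided difference of $\alpha\mapsto(\alpha|s|^2+i\lambda)^{-1}$, so the (absolutely convergent, since $n\geq1$) $\lambda$-integral passes through the divided difference and one lands directly on \eqref{eq:S and T} with $f(\alpha)=e^{-\alpha}$; you may prefer this to truncating the $u$-integration. For the $s$-integration of the symbol, your pointwise computation (polar coordinates, the Gamma integral, $F_{k,d}^{(n)}(\alpha)=(-1)^{n-k/2}\tfrac{\Gamma(d/2+n-k/2)}{\Gamma(d/2)}\alpha^{-(n-k/2+d/2)}$, Hermite--Genocchi in reverse, and the observation that the choice of primitive is immaterial because polynomials of degree $<n$ have vanishing $n$-th divided differences) is a legitimately more explicit alternative to the paper's Lemma \ref{Fk lemma}, which avoids computing $F_{k,d}^{(n)}$ by differentiating under the integral sign; the constants do combine to $(-1)^{k/2}\pi^{d/2}$ as you say.

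The genuine gap is the step you dispose of in one clause: ``lifting this pointwise identity of symbols to multiple operator integrals via Definition \ref{def:MOI}.'' After stage one, the left-hand side of the theorem is $\int_{\mathbb R^d}|s|^{-k}T^{x}_{(\sigma_{|s|^{-2}}f)^{[n]}}(b_1,\ldots,b_n)\,ds$, an integral of operator-valued functions, and the pointwise identity $\int_{\mathbb R^d}|s|^{-k}(\sigma_{|s|^{-2}}f)^{[n]}\,ds=(-1)^{k/2}\pi^{d/2}F_{k,d}^{[n]}$ does not by itself allow you to pull the $s$-integral inside $T^x$. That interchange is the analytic heart of the paper's proof (Lemma \ref{lem:MOI bound Banach}, Lemma \ref{bochner wiener lemma} and Proposition \ref{wiener moi fact}: $f\mapsto T^x_{f^{[n]}}(b_1,\ldots,b_n)$ is bounded from $(\dot W^{n,2}\cap\dot W^{n+1,2})(I)$ to $L_\infty(\mathbb T^d_\theta)$ and $s\mapsto|s|^{-k}(\sigma_{|s|^{-2}}f)|_I$ is Bochner integrable into that space), yet your list of delicate interchanges names only the $\lambda$--$u$ swap, not this one. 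In your Gaussian-specific setting the gap can be closed without the Sobolev machinery: with $c=\inf{\rm spec}(x)>0$ one has $\|T^x_{g_s^{[n]}}(b_1,\ldots,b_n)\|_\infty\leq\frac{|s|^{2n}}{n!}e^{-c|s|^2}\prod_j\|b_j\|_\infty$, so the $s$-integral converges absolutely, and the whole double integral can be realised as a single multiple operator integral over the finite measure space $\mathbb R^d\times S^n$ with measure proportional to $|s|^{2n-k}e^{-c|s|^2/2}\,ds\,du$ and bounded kernels $e^{-|s|^2u_j(\alpha_j-c/2)}$, after which the representation-independence of \cite[Lemma 4.3]{ACDS} that you invoke elsewhere identifies it with $T^x_{F_{k,d}^{[n]}}$ up to the computed constant. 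Some argument of this kind (or the paper's Bochner-space argument) must be supplied; as written, your proposal stops one step short of the statement.
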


For any open interval $I\subseteq \mathbb R$ and any $n\in\mathbb N$ we will use the space
$$\dot W^{n,2}(I):=\{f\in \mathcal S'(I):~f^{(n)}\in L_2(I)\}$$
(where $\mathcal S'(I)$ denotes the tempered distributions on $I$) with associated seminorm
$$\|f\|_{\dot W^{n,2}(I)}:=\|f^{(n)}\|_{L_2(I)}\,.$$
By slight abuse of notation, we denote by $(\dot W^{n,2}\cap \dot W^{n+1,2})(I)$ the space of equivalence classes of functions in $\dot W^{n,2}(I)\cap \dot W^{n+1,2}(I)$ modulo polynomials of degree at most $n-1$. We omit the notation for `the equivalence class of'. We equip $(\dot W^{n,2}\cap \dot W^{n+1,2})(I)$ with the norm
$$\|f\|_{(\dot W^{n,2}\cap \dot W^{n+1,2})(I)}:=\|f^{(n)}\|_{L_2(I)}+\|f^{(n+1)}\|_{L_2(I)}\,.$$
This space $(\dot W^{n,2}\cap \dot W^{n+1,2})(I)$ is a Banach space, as can be shown by standard techniques. Note also that any $f\in\dot{W}^{n,2}(I)$ is a continuous function, because $f^{(n)}$ is locally integrable. Hence, any representative of a class in $(\dot W^{n,2}\cap \dot W^{n+1,2})(I)$ is a continuous function.

\begin{lem}\label{lem:MOI bound Banach}
Let $x\in L_\infty(\mathbb T^d_\theta)$ be self-adjoint and let $I$ be an open interval containing ${\rm spec}(x).$ Let $f$ be a Schwartz function on $\mathbb R$. For all $b_1,\ldots,b_n\in L_\infty(\mathbb{T}^d_\theta)$ we have
$$\|T^{x}_{f^{[n]}}(b_1,\cdots,b_n)\|_{\infty}\leq c_{n,x,I}\|f\|_{(\dot{W}^{n,2}\cap \dot{W}^{n+1,2})(I)}\prod_{l=1}^n\|b_l\|_{\infty}.$$
\end{lem}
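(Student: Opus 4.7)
The plan is to reduce the claim to the standard $L^1$-Fourier bound for multiple operator integrals: from the Fourier representation \eqref{eq:third id dd} of $g^{[n]}$ and the defining formula \eqref{eq:MOI} of the MOI, one reads off
\[
\|T^x_{g^{[n]}}(b_1,\ldots,b_n)\|_\infty \le \tfrac{1}{n!}\,\|\widehat{g^{(n)}}\|_{L_1(\mathbb R)}\prod_{l=1}^n\|b_l\|_\infty
\]
for every Schwartz function $g$ on $\mathbb R$. It therefore suffices to exhibit a Schwartz $g$ with $T^x_{f^{[n]}}=T^x_{g^{[n]}}$ and to estimate $\|\widehat{g^{(n)}}\|_{L_1(\mathbb R)}\le c_{n,x,I}\|f\|_{(\dot W^{n,2}\cap \dot W^{n+1,2})(I)}$.

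First I would pick an auxiliary open interval $J$ with ${\rm spec}(x)\subset J\subset \overline{J}\subset I$ and $\overline{J}$ compact, together with a cutoff $\chi\in C_c^\infty(I)$ satisfying $\chi\equiv 1$ on $J$. Since $T^x_{f^{[n]}}$ factors through the restriction of $f$ to any neighbourhood of ${\rm spec}(x)$ (a point already used in the discussion after Definition \ref{def:MOI}), we have $T^x_{f^{[n]}}=T^x_{g^{[n]}}$ where $g:=\chi f$ is Schwartz and compactly supported in $I$. The Cauchy--Schwarz inequality against the weight $(1+t^2)^{-1}$ combined with Plancherel yields
\[
\|\widehat{g^{(n)}}\|_{L_1(\mathbb R)}^{\,2}\le \pi\bigl(\|g^{(n)}\|_{L_2(\mathbb R)}^2+\|g^{(n+1)}\|_{L_2(\mathbb R)}^2\bigr),
\]
and Leibniz's rule bounds $\|g^{(k)}\|_{L_2(\mathbb R)}\le \sum_{j=0}^k \binom{k}{j}\|\chi^{(k-j)}\|_\infty\|f^{(j)}\|_{L_2(K)}$, where $K:={\rm supp}\,\chi$ is compact in $I$. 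The contributions from $j\in\{n,n+1\}$ are directly controlled by $\|f\|_{(\dot W^{n,2}\cap\dot W^{n+1,2})(I)}$.

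The main obstacle will be the lower-order terms $j<n$, since the declared Sobolev seminorm does not see $f^{(j)}$ for such $j$. This is precisely where the quotient by polynomials of degree $\le n-1$ enters: fixing any basepoint $t_0\in K$, I would pick the unique representative of the equivalence class of $f$ with $f^{(j)}(t_0)=0$ for $0\le j\le n-1$. Iterating the fundamental theorem of calculus over a bounded interval containing $K$ then gives the Poincar\'e-type estimate $\|f^{(j)}\|_{L_\infty(K)}\le C_{K,n}\|f^{(n)}\|_{L_2(I)}$, and hence $\|f^{(j)}\|_{L_2(K)}\le C'_{K,n}\|f^{(n)}\|_{L_2(I)}$ for all $j<n$. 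Assembling the estimates yields the desired bound, with a constant $c_{n,x,I}$ that depends only on $n$ and on the choices of $\chi$, $K$, $J$ (which in turn depend only on ${\rm spec}(x)$ and $I$). The whole argument is consistent with the quotient space structure, since adding a polynomial of degree $\le n-1$ to $f$ leaves the right-hand side unchanged by definition and leaves $T^x_{f^{[n]}}$ unchanged because order-$n$ divided differences annihilate such polynomials.
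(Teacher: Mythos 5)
Your argument is correct, but it places the cutoff differently from the paper, and that is the one genuine point of divergence. The paper never multiplies $f$ by a cutoff: it uses the representation $f^{[n]}(\alpha_0,\ldots,\alpha_n)=\int_{S^n}f^{(n)}\big(\sum_{j}\lambda_j\alpha_j\big)\,d\lambda$ and observes that for $\alpha_0,\ldots,\alpha_n\in{\rm spec}(x)$ the convex combination stays in $J=[\inf{\rm spec}(x),\sup{\rm spec}(x)]$, so inside the simplex integral one may replace $f^{(n)}$ by $f^{(n)}\phi$, with $\phi$ smooth, supported in $I$ and equal to $1$ on $J$; Fourier expansion of $f^{(n)}\phi$ and the same Cauchy--Schwarz/Plancherel step then give $\|T^x_{f^{[n]}}(b_1,\ldots,b_n)\|_\infty\le\frac{1}{n!}\|\widehat{(f^{(n)}\phi)}\|_{L_1(\mathbb R)}\prod_{l}\|b_l\|_\infty$, and the Leibniz rule applied to $f^{(n)}\phi$ only ever produces $f^{(n)}$ and $f^{(n+1)}$, so the seminorm appears immediately. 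You instead cut off $f$ itself, which drags the derivatives $f^{(j)}$ with $j<n$ into the Leibniz expansion; your fix --- subtracting the degree-$(n-1)$ Taylor polynomial at a basepoint (legitimate, since $n$-th divided differences annihilate such polynomials and the homogeneous seminorm ignores them) and then controlling the low-order derivatives on the compact support of the cutoff by an iterated fundamental-theorem-of-calculus/Poincar\'e bound against $\|f^{(n)}\|_{L_2(I)}$ --- is sound, with constants depending only on $n$, ${\rm spec}(x)$ and $I$, and the convexity of $I$ guarantees the interval used in the Poincar\'e step lies in $I$. The paper's placement of the cutoff buys exactly the elimination of that normalization-and-Poincar\'e detour; your route is slightly longer but makes explicit why the quotient modulo polynomials of degree at most $n-1$ is the natural setting for the estimate. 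One small caveat: the normalized representative (namely $f$ minus its Taylor polynomial) is no longer Schwartz, so the reduction should be phrased, as you in effect do, as an equality of symbols on ${\rm spec}(x)^{n+1}$, the $\frac{1}{n!}\|\widehat{g^{(n)}}\|_{L_1(\mathbb R)}$ estimate being applied only to the smooth compactly supported function $g$.
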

\begin{proof} Let $J=[\inf{\rm spec}(x),\sup{\rm spec}(x)]\subseteq I.$ Let $\phi$ be a smooth function supported in $I$ such that $\phi$ equals $1$ on $J.$ 
We have
$$f^{[n]}(\alpha_0,\cdots,\alpha_n)=\int_{S^n}f^{(n)}\bigg(\sum_{j=0}^n\lambda_j\alpha_j\bigg)d\lambda,$$
where the integration is taken with respect to the standard measure on the simplex $S^n=\{\lambda\in\mathbb R_{\geq0}^{n+1}:~\sum_{j=0}^n\lambda_j=1\}.$ If $\alpha_0,\cdots,\alpha_n\in{\rm spec}(x),$ then
$\sum_{j=0}^n\lambda_j\alpha_j\in J.$ Therefore, denoting the Fourier transform of the Schwartz function $f^{(n)}\phi$ by $\widehat{(f^{(n)}\phi)}$, we have
\begin{align*}
f^{[n]}(\alpha_0,\cdots,\alpha_n)&=\int_{S^n}(f^{(n)}\phi)\bigg(\sum_{j=0}^n\lambda_j\alpha_j\bigg)d\lambda\\
&=\int_{S^n}\int_\mathbb{R}\widehat{(f^{(n)}\phi)}(t)e^{it\lambda_0\alpha_0}\cdots e^{it\lambda_n\alpha_n}dt\,d\lambda,
\end{align*}
whenever $\alpha_0,\cdots,\alpha_n\in{\rm spec}(x).$ Thus,
$$\|T^{x}_{f^{[n]}}(b_1,\cdots,b_n)\|_{\infty}\leq\frac{1}{n!}\|\widehat{(f^{(n)}\phi)}\|_{L_1(\mathbb{R})}\prod_{l=1}^n\|b_l\|_{\infty}.$$
Note that
$$\|\widehat{(f^{(n)}\phi)}\|_{L_1(\mathbb{R})}\leq\sqrt{2}\big(\|f^{(n)}\phi\|_{L_2(I)}+\|(f^{(n)}\phi)'\|_{L_2(I)}\big).$$
By the Leibniz rule, we deduce
$$\|\widehat{(f^{(n)}\phi)}\|_{L_1(\mathbb{R})}\leq \|f^{(n)}\|_{L_2(I)}(\|\phi\|_{\infty}+\|\phi'\|_{\infty})+\|f^{(n+1)}\|_{L_2(I)}\|\phi\|_{\infty}.$$
Since $\phi$ depends only on $x$ and $I,$ the assertion follows.
\end{proof}

\subsection{Integration over the symbol of a multiple operator integral}
In this subsection we prove the following general result. We let $(\sigma_tf)(\alpha):=f(\alpha/t)$ denote the dilation operator.

\begin{thm}\label{moi integration thm} Let $x\in L_\infty(\mathbb T^d_\theta)$ be positive and invertible. Let $f$ be a Schwartz function on $\mathbb{R}.$ Let $k\in 2\mathbb{Z}_+$ and $n\in\mathbb{N}$ be such that $2n\geq k.$ For all $b_1,\ldots,b_n\in L_\infty(\mathbb{T}^d_\theta)$ we have
$$\int_{\mathbb{R}^d}|s|^{-k}T^{x}_{(\sigma_{|s|^{-2}}f)^{[n]}}(b_1,\ldots,b_n)\,ds=\int_{\mathbb{R}^d}f^{(\frac{k}{2})}(|s|^2)ds\cdot T^{x}_{F_{k,d}^{[n]}}(b_1,\ldots,b_n),$$
where the left-hand side is a Bochner integral taking values in $L_\infty(\mathbb T^d_\theta)$, and $F_{k,d}$ is any $\frac{k}{2}^{\text{th}}$ primitive of $\alpha\mapsto\alpha^{-\frac{d}{2}}$ on $(0,\infty)$ (cf. Remark \ref{rem:Fkd}).
\end{thm}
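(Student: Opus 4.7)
The plan is to reduce the claim to a pointwise identity at the level of symbols and then lift this to the operator identity by using that $h\mapsto T^x_{h^{[n]}}(b_1,\ldots,b_n)$ is a bounded linear map out of a suitable Sobolev-type space of scalar functions. The symbol-level identity asserts that, for all $\alpha_0,\ldots,\alpha_n\in\mathrm{spec}(x)$,
\[
\int_{\mathbb R^d}|s|^{-k}(\sigma_{|s|^{-2}}f)^{[n]}(\alpha_0,\ldots,\alpha_n)\,ds=\Big(\int_{\mathbb R^d}f^{(k/2)}(|s|^2)\,ds\Big)F_{k,d}^{[n]}(\alpha_0,\ldots,\alpha_n),
\]
from which the theorem will follow by applying the linear map $T^x_{(\cdot)^{[n]}}(b_1,\ldots,b_n)$.

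To establish this scalar identity, the first step is to use $(\sigma_{|s|^{-2}}f)(\alpha)=f(|s|^2\alpha)$ and the scaling of divided differences to write $(\sigma_{|s|^{-2}}f)^{[n]}(\alpha_0,\ldots,\alpha_n)=|s|^{2n}f^{[n]}(|s|^2\alpha_0,\ldots,|s|^2\alpha_n)$, then combine with the simplex representation
\[
f^{[n]}(|s|^2\alpha_0,\ldots,|s|^2\alpha_n)=\int_{S^n}f^{(n)}\Big(|s|^2\sum_{j=0}^n\lambda_j\alpha_j\Big)d\lambda.
\]
Switching order of integration (Fubini), passing to polar coordinates $s=r\omega$, and substituting $u=r^2\beta$ with $\beta:=\sum_j\lambda_j\alpha_j$ reduces the inner radial integral to $\tfrac12\mathrm{vol}(\mathbb S^{d-1})\,\beta^{(k-d)/2-n}\int_0^\infty u^{n+(d-k)/2-1}f^{(n)}(u)\,du$. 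Integration by parts $n-k/2$ times, legitimate because $f$ is Schwartz and all boundary terms vanish, turns $f^{(n)}$ into $f^{(k/2)}$ and produces the constant $(-1)^{n-k/2}(d/2)_{n-k/2}$. Since differentiating $F_{k,d}$ a further $n-k/2$ times past $F_{k,d}^{(k/2)}(\alpha)=\alpha^{-d/2}$ gives $F_{k,d}^{(n)}(\alpha)=(-1)^{n-k/2}(d/2)_{n-k/2}\alpha^{(k-d)/2-n}$, the remaining integral $\int_{S^n}\beta^{(k-d)/2-n}d\lambda$ reassembles with this constant into $F_{k,d}^{[n]}(\alpha_0,\ldots,\alpha_n)$, and the scalar identity follows.

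To lift this to the operator identity, I would set $E:=(\dot W^{n,2}\cap\dot W^{n+1,2})(I)$ for an open interval $I\supseteq\mathrm{spec}(x)$ bounded away from $0$, so that Lemma \ref{lem:MOI bound Banach} makes $h\mapsto T^x_{h^{[n]}}(b_1,\ldots,b_n)$ a bounded linear map $E\to L_\infty(\mathbb T^d_\theta)$. Writing $g_s(\alpha):=|s|^{-k}f(|s|^2\alpha)$, the theorem reduces to the Bochner-integral identity $\int_{\mathbb R^d}g_s\,ds=\big(\int_{\mathbb R^d}f^{(k/2)}(|s|^2)\,ds\big)F_{k,d}|_I$ in $E$, after which applying $T$ to both sides yields the claim. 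The required Bochner integrability $s\mapsto g_s\in L^1(\mathbb R^d;E)$ is checked by a direct estimate on $g_s^{(n)}(\alpha)=|s|^{2n-k}f^{(n)}(|s|^2\alpha)$ (and $g_s^{(n+1)}$) in $L_2(I)$ via the substitution $u=|s|^2\alpha$, using Schwartz decay of $f^{(n)},f^{(n+1)}$ to control $|s|\to\infty$ and the hypotheses $2n\geq k$ and $d\geq 2$ to control $|s|\to 0$. This lifting step, and in particular reconciling the Bochner integral in $E$ with the scalar pointwise identity from Step 1 (via continuity of pointwise evaluation on $E$ at points bounded away from zero), is the main technical obstacle.
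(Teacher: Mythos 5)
Your proposal is correct in substance, and its lifting half coincides with the paper's: you use the same space $E=(\dot W^{n,2}\cap\dot W^{n+1,2})(I)$, the same boundedness of $h\mapsto T^x_{h^{[n]}}(b_1,\ldots,b_n)$ (Lemma \ref{lem:MOI bound Banach}, Proposition \ref{wiener moi fact}), and the same Bochner-integrability estimates as Lemma \ref{bochner wiener lemma}. Where you genuinely diverge is in identifying the integrated symbol. The paper works with the function itself: since $|s|^{-k}f(|s|^2\alpha)$ need not be integrable in $s$ near $0$, it subtracts the Taylor polynomial of $f$ at $0$ (times a cutoff) to get a pointwise-integrable representative of the class in $E$, and identifies the result by differentiating $\frac k2$ times under the integral sign and integrating back (Lemmas \ref{Fk lemma} and \ref{exact integral lemma}). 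You instead compute directly at the level of $n$-th divided differences via Hermite--Genocchi, scaling, polar coordinates, and $n-\frac k2$ integrations by parts; this is correct (the boundary exponents are all at least $\frac d2>0$, so the boundary terms do vanish, and the constant $(-1)^{n-k/2}\tfrac d2(\tfrac d2+1)\cdots(\tfrac d2+n-\tfrac k2-1)$ matches $F_{k,d}^{(n)}$, also in the logarithmic case $d$ even, $k\geq d$, since only $F_{k,d}^{(k/2)}(\alpha)=\alpha^{-d/2}$ is used), and it has the pleasant side effect that the $s$-integrand is $O(|s|^{2n-k})$ near $0$, so no Taylor regularisation is needed — your route buys a more explicit computation, the paper's buys a statement about the primitive itself rather than its divided differences. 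One point in your last step needs repair: elements of $E$ are classes modulo polynomials of degree $<n$, so ``pointwise evaluation on $E$'' is not defined. To reconcile your scalar identity with the Bochner integral you should instead use the functionals $h\mapsto h^{[n]}(\alpha_0,\ldots,\alpha_n)$ (equivalently $h\mapsto\int h^{(n)}\psi$ for test functions $\psi$), which are bounded on $E$ because $h^{(n)}\in W^{1,2}_{loc}(I)\subseteq L_\infty^{loc}(I)$, kill the polynomial ambiguity, separate the points of $E$, commute with the Bochner integral, and are exactly what your Step 1 computes; two elements of $E$ with equal $n$-th divided differences coincide. With that substitution the argument closes.
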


\begin{lem}\label{Fk lemma} Let $k\in 2\mathbb{Z}_+.$ Let $f$ be a Schwartz function on $\mathbb{R}$ and let $\phi$ be a Schwartz function on $\mathbb{R}^d$ that equals 1 on a neighbourhood of $0.$ There exists a $\frac{k}{2}^\text{th}$ order primitive $F_{k,d}$ of $\alpha\mapsto\alpha^{-\frac{d}{2}}$ such that, for all $\alpha>0$,
$$\int_{\mathbb{R}^d}|s|^{-k}\Big(f(\alpha|s|^2)-\sum_{j=0}^{\frac{k}{2}-1}\frac{f^{(j)}(0)}{j!}(\alpha|s|^2)^{j}\cdot\phi(s)\Big)ds=\int_{\mathbb{R}^d}f^{(\frac{k}{2})}(|s|^2)ds\cdot F_{k,d}(\alpha).$$
\end{lem}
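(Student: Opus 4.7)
Denote the left-hand side by $G(\alpha)$ and set $C := \int_{\mathbb R^d} f^{(k/2)}(|s|^2)\,ds$. The strategy is to compute $G^{(k/2)}(\alpha)$ explicitly, observe that it equals $C\alpha^{-d/2}$, and then realise $G$ itself as $C$ times a suitable $\frac{k}{2}$-th order primitive of $\alpha \mapsto \alpha^{-d/2}$.

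First I would verify that $G(\alpha)$ is well-defined and smooth on $(0,\infty)$. Near $s = 0$, where $\phi \equiv 1$, Taylor's theorem at the origin applied to $u \mapsto f(\alpha u)$ gives
\[
f(\alpha|s|^2) - \sum_{j=0}^{k/2-1} \frac{f^{(j)}(0)}{j!}(\alpha|s|^2)^{j} = O\big((\alpha|s|^2)^{k/2}\big),
\]
so the factor $|s|^{-k}$ is absorbed and the integrand is locally bounded. At infinity, $f(\alpha|s|^2)$ decays to infinite order in $|s|$ by the Schwartz property of $f$, and the subtracted polynomials in $s$ are made Schwartz in $s$ by their factor $\phi(s)$. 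The same Taylor-remainder estimate, now applied to $f^{(m)}$ and truncated to order $k/2 - m$, shows that $\partial_\alpha^m$ of the integrand is integrable and locally uniformly dominated in $\alpha$ on any compact sub-interval of $(0,\infty)$, justifying differentiation under the integral up to order $k/2$.

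Second, I would compute the $\frac{k}{2}$-th $\alpha$-derivative. Each term $\frac{f^{(j)}(0)}{j!}(\alpha|s|^2)^{j}\phi(s)$ with $j < k/2$ has $\alpha$-degree strictly below $k/2$ and is therefore annihilated, while $\partial_\alpha^{k/2}\,f(\alpha|s|^2) = |s|^{k}\,f^{(k/2)}(\alpha|s|^2)$; multiplication by $|s|^{-k}$ yields
\[
G^{(k/2)}(\alpha) = \int_{\mathbb R^d} f^{(k/2)}(\alpha|s|^2)\,ds,
\]
and the radial dilation $s = t/\sqrt{\alpha}$ (with Jacobian $\alpha^{-d/2}$) gives $G^{(k/2)}(\alpha) = C\,\alpha^{-d/2}$.

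Finally, fix any $\frac{k}{2}$-th order primitive $F_0$ of $\alpha \mapsto \alpha^{-d/2}$ on $(0,\infty)$ (for instance the one exhibited in Remark \ref{rem:Fkd}). The difference $G(\alpha) - C\,F_0(\alpha)$ then has vanishing $\frac{k}{2}$-th derivative on the connected interval $(0,\infty)$ and is therefore a polynomial $P(\alpha)$ of degree strictly less than $k/2$. Setting $F_{k,d} := F_0 + P/C$ gives a $\frac{k}{2}$-th order primitive of $\alpha \mapsto \alpha^{-d/2}$ satisfying the claimed identity, since adding a polynomial of degree $< k/2$ leaves the $\frac{k}{2}$-th derivative unchanged. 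The main technical obstacle is the uniform-in-$\alpha$ control of the integrand and each of its $\alpha$-derivatives needed to pass $\partial_\alpha^{k/2}$ inside the integral on $\mathbb R^d$ near the origin; this is precisely why the polynomial subtraction is designed to cancel the first $k/2$ Taylor coefficients of $f$ at $0$, and is handled uniformly by the Taylor-remainder and Schwartz-decay estimates from the second paragraph.
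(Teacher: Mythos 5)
Your proof is correct and follows essentially the same route as the paper's: check convergence, differentiate under the integral $\tfrac{k}{2}$ times, use the rescaling $s=t/\sqrt{\alpha}$ to get $G^{(\frac k2)}(\alpha)=\alpha^{-\frac d2}\int_{\mathbb R^d}f^{(\frac k2)}(|s|^2)\,ds$, and then integrate $\tfrac{k}{2}$ times, absorbing the resulting polynomial of degree $<\tfrac k2$ into the choice of the primitive $F_{k,d}$. Your last step of adding $P/C$ tacitly assumes $C=\int_{\mathbb R^d}f^{(\frac k2)}(|s|^2)\,ds\neq 0$, but the paper's one-line conclusion ``integrating $\tfrac k2$ times'' rests on the same implicit assumption, so this is not a genuine deviation from the paper's argument.
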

\begin{proof} The left-hand side integral converges because, as $s\to 0$, the expression between brackets is $\mathcal{O}(|s|^k)$. Similarly this integral converges after differentiating the integrand with respect to $\alpha$. Denoting the left-hand side by $G(\alpha)$, we have
$$G^{(\frac{k}{2})}(\alpha)=\int_{\mathbb{R}^d}f^{(\frac{k}{2})}(\alpha |s|^2)ds=\alpha^{-\frac{d}{2}}\int_{\mathbb{R}^d}f^{(\frac{k}{2})}(|s|^2)ds.$$
Integrating $\frac{k}{2}$ times, we complete the proof.
\end{proof}

We have the following simple but powerful proposition.

\begin{prop}\label{wiener moi fact} Let $n\in \mathbb N$ and let $x\in L_\infty(\mathbb T^d_\theta)$ be self-adjoint. Let $I$ be an open interval containing the spectrum of $x$. Let $s\mapsto h_s$ $(s\in\mathbb{R}^d)$ be a Bochner integrable mapping taking values in $(\dot{W}^{n,2}\cap \dot{W}^{n+1,2})(I).$ Denote its Bochner integral by
$$h=\int_{\mathbb{R}^d}h_s\,ds.$$
Then $h\in (\dot{W}^{n,2}\cap \dot{W}^{n+1,2})(I)$ and
$$\int_{\mathbb{R}^d}T^{x}_{h_s^{[n]}}(b_1,\ldots,b_n)\,ds=T^{x}_{h^{[n]}}(b_1,\ldots,b_n),$$
where the left-hand side is a Bochner integral with values in $L_\infty(\mathbb T^d_\theta)$.
\end{prop}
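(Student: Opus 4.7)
The plan is to realize both sides of the identity as applications of a single bounded linear operator on the Banach space $(\dot W^{n,2}\cap\dot W^{n+1,2})(I)$, and then invoke the standard commutation of bounded linear maps with Bochner integration. Specifically, for fixed $b_1,\ldots,b_n\in L_\infty(\mathbb T^d_\theta)$, I would view the assignment $f\mapsto T^x_{f^{[n]}}(b_1,\ldots,b_n)$ as a linear map $\Phi$ initially defined on Schwartz functions. By Lemma~\ref{lem:MOI bound Banach}, this $\Phi$ is bounded with respect to the $(\dot W^{n,2}\cap\dot W^{n+1,2})(I)$-norm, with values in $L_\infty(\mathbb T^d_\theta)$. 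Combined with density of Schwartz functions in $(\dot W^{n,2}\cap\dot W^{n+1,2})(I)$ (see below), this gives a unique bounded linear extension $\widetilde\Phi:(\dot W^{n,2}\cap\dot W^{n+1,2})(I)\to L_\infty(\mathbb T^d_\theta)$, which is what we interpret as $T^x_{h^{[n]}}(b_1,\ldots,b_n)$ for general $h$ in the Sobolev space; consistency with the original definition on smooth $h$ is automatic from continuity.

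Once $\widetilde\Phi$ is in place, the conclusion is immediate. The first assertion, $h\in(\dot W^{n,2}\cap\dot W^{n+1,2})(I)$, is the defining property of the Bochner integral, which always lands in the Banach space of the integrand. For the identity, I apply the fundamental fact that bounded linear maps commute with Bochner integrals, namely $L(\int g)=\int L\circ g$ whenever $L$ is a bounded linear map between Banach spaces and $g$ is Bochner integrable. Taking $L=\widetilde\Phi$ and $g(s)=h_s$ yields
$$T^x_{h^{[n]}}(b_1,\ldots,b_n)=\widetilde\Phi(h)=\int_{\mathbb R^d}\widetilde\Phi(h_s)\,ds=\int_{\mathbb R^d}T^x_{h_s^{[n]}}(b_1,\ldots,b_n)\,ds,$$
the right-hand integral being Bochner in $L_\infty(\mathbb T^d_\theta)$ (its strong measurability and norm-integrability are inherited from those of $s\mapsto h_s$ by boundedness of $\widetilde\Phi$).

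The main obstacle will be the density step underpinning the extension of $\Phi$. The approach I would take is to start with an arbitrary representative $f$ of a class in $(\dot W^{n,2}\cap\dot W^{n+1,2})(I)$, approximate its top derivative $f^{(n)}\in L_2(I)$ simultaneously with $f^{(n+1)}\in L_2(I)$ by smooth compactly supported functions (via a joint mollification-and-truncation argument in $L_2(I)$), then integrate the approximants $n$ times. The resulting smooth functions differ from $f$ by an error that tends to $0$ in both $\dot W^{n,2}(I)$ and $\dot W^{n+1,2}(I)$, up to polynomials of degree below $n$ that get quotiented out; extending these smooth approximants to Schwartz functions on $\mathbb R$ is then a routine matter. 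Everything else reduces to Lemma~\ref{lem:MOI bound Banach} and textbook Bochner-integration facts.
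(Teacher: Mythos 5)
Your core argument is exactly the paper's: Lemma \ref{lem:MOI bound Banach} makes $f\mapsto T^{x}_{f^{[n]}}(b_1,\ldots,b_n)$ a bounded linear map with values in $L_\infty(\mathbb T^d_\theta)$, the first claim is the definition of the Bochner integral, and the identity follows because bounded linear maps commute with Bochner integration. The difference is your detour through Schwartz functions: the paper simply regards the map as defined and bounded on all of $(\dot W^{n,2}\cap\dot W^{n+1,2})(I)$ (the proof of Lemma \ref{lem:MOI bound Banach} only uses $f^{(n)},f^{(n+1)}\in L_2(I)$ to put $\widehat{(f^{(n)}\phi)}$ in $L_1(\mathbb R)$, so it applies verbatim to every element of the space, and the map factors through the quotient because $f^{[n]}$ annihilates polynomials of degree less than $n$), whereas you define it only on Schwartz functions and extend by density.

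The one point you gloss over is precisely the cost of that detour. The proposition's two sides involve the genuine multiple operator integrals $T^{x}_{h_s^{[n]}}$ and $T^{x}_{h^{[n]}}$ of Definition \ref{def:MOI}, and the $h_s$ are arbitrary elements of $(\dot W^{n,2}\cap\dot W^{n+1,2})(I)$, not restrictions of Schwartz functions. Continuity of your extension $\widetilde\Phi$ does not by itself identify $\widetilde\Phi(h_s)$ with $T^{x}_{h_s^{[n]}}(b_1,\ldots,b_n)$: writing $\widetilde\Phi(h_s)=\lim_j T^{x}_{g_j^{[n]}}(b_1,\ldots,b_n)$ for Schwartz approximants $g_j$, you still need $T^{x}_{g_j^{[n]}}\to T^{x}_{h_s^{[n]}}$, i.e. the estimate of Lemma \ref{lem:MOI bound Banach} applied to the non-Schwartz functions $g_j-h_s$ --- which is exactly the extension of the lemma that would have made your density argument unnecessary in the first place. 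So your sentence ``consistency with the original definition on smooth $h$ is automatic from continuity'' is not justified as stated (and the $h_s$ need not even be smooth). The repair is easy: either observe that the lemma's proof gives the bound for every $f$ in the Sobolev space and apply the bounded map directly, as the paper does, or keep your extension but add the identification step explicitly. With that repair your proof coincides with the paper's; the density construction itself is fine but superfluous.
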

\begin{proof}
By Lemma \ref{lem:MOI bound Banach}, the map
$$T:(\dot{W}^{n,2}\cap \dot{W}^{n+1,2})(I)\to L_\infty(\mathbb T^d_\theta),\qquad f\mapsto T^{x}_{f^{[n]}}(b_1,\ldots,b_n)$$
is a continuous linear map between Banach spaces. Hence $s\mapsto T(h_s)$ is Bochner integrable over $\mathbb R^d$ and $\int_{\mathbb R^d}T(h_s)ds=T(\int_{\mathbb R^d} h_sds)=T(h)$.
\end{proof}

The function playing the role of $h_s$ in the above proposition will be $h_s=|s|^{-k}(\sigma_{|s|^{-2}}f)|_I$, where $(\sigma_tf)(\alpha):=f(\alpha/t)$ denotes the dilation operator.

\begin{lem}\label{bochner wiener lemma} Let $f$ be a Schwartz function on $\mathbb{R}.$ Let $I$ be a bounded open interval separated from 0. If $n\in\mathbb N$ and $k\in\mathbb Z_+$ satisfy $2n\geq k$, then the mapping
$$s\mapsto |s|^{-k}(\sigma_{|s|^{-2}}f)|_I,\quad 0\neq s\in\mathbb{R}^d.$$
is Bochner integrable to $(\dot{W}^{n,2}\cap \dot{W}^{n+1,2})(I).$
\end{lem}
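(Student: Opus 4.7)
By Bochner's theorem, I need to verify two things: (i) strong measurability of the map $s\mapsto h_s:=|s|^{-k}(\sigma_{|s|^{-2}}f)|_I$ as a function $\mathbb{R}^d\setminus\{0\}\to(\dot W^{n,2}\cap\dot W^{n+1,2})(I)$, and (ii) integrability of the real-valued function $s\mapsto\|h_s\|_{(\dot W^{n,2}\cap\dot W^{n+1,2})(I)}$ over $\mathbb R^d$.

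For (i), strong measurability will follow from Pettis's theorem. The map is continuous on $\mathbb R^d\setminus\{0\}$, which is routine by dominated convergence applied to the pointwise expression $h_s^{(j)}(\alpha)=|s|^{2j-k}f^{(j)}(\alpha|s|^2)$ for $j=n,n+1$. The target space is separable because $f\mapsto(f^{(n)},f^{(n+1)})$ is an isometric embedding into the separable space $L_2(I)\times L_2(I)$; the kernel is exactly polynomials of degree at most $n-1$, which have been quotiented out.

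For (ii), direct differentiation gives $h_s^{(j)}(\alpha)=|s|^{2j-k}f^{(j)}(\alpha|s|^2)$, and the substitution $\beta=\alpha|s|^2$ yields
\begin{equation*}
\|h_s^{(j)}\|_{L_2(I)}^2 = |s|^{4j-2k-2}\int_{|s|^2I}|f^{(j)}(\beta)|^2\,d\beta,\qquad j=n,n+1.
\end{equation*}
I then split the $s$-integral into the regions $|s|\leq 1$ and $|s|>1$. Since $I$ is bounded and separated from zero, I fix $0<c_1<c_2$ with $I\subseteq\{c_1\leq|\alpha|\leq c_2\}$, so that $|s|^2I$ has Lebesgue measure $(c_2-c_1)|s|^2$. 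In the region $|s|\leq 1$, I estimate the integrand by $\|f^{(j)}\|_\infty^2$, obtaining $\|h_s^{(j)}\|_{L_2(I)}\leq C_j|s|^{2j-k}$. The hypothesis $2n\geq k$ ensures the exponent $2j-k\geq 2n-k\geq 0$ is nonnegative, so $|s|^{2j-k}$ is bounded near the origin and hence integrable over $\{|s|\leq 1\}\subseteq\mathbb R^d$. In the region $|s|>1$, the domain $|s|^2I$ is pushed to infinity, where Schwartz decay of $f^{(j)}$ (estimating $|f^{(j)}(\beta)|\leq C_N(1+|\beta|)^{-N}$ with $N$ arbitrarily large) gives rapid polynomial decay of $\|h_s^{(j)}\|_{L_2(I)}$. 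Summing the two regimes for both $j=n$ and $j=n+1$ yields finiteness of the integral of the norm, and Bochner's theorem concludes.

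\textbf{Main obstacle.} The only delicate point is the interplay between the singular prefactor $|s|^{-k}$ and the vanishing of $f(\alpha|s|^2)$ as $|s|\to 0$. The $L_2$ normalization of $h_s^{(j)}$ on a domain shrinking at rate $|s|^2$ supplies the compensating factor $|s|^{2j-k}$, and the assumption $2n\geq k$ is exactly what renders this nonnegative, and hence integrable at the origin in any dimension $d\geq 1$.
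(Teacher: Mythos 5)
Your proposal is correct and follows essentially the same route as the paper: measurability via continuity of $s\mapsto h_s$ into the homogeneous Sobolev space, then the change of variables giving $\|h_s^{(j)}\|_{L_2(I)}=|s|^{2j-k-1}\|f^{(j)}\|_{L_2(|s|^2I)}$, with $2n\geq k$ handling the origin and Schwartz decay on $|s|^2I$ (using that $I$ is separated from $0$) handling $|s|\to\infty$. The only cosmetic difference is that you split explicitly into $|s|\leq 1$ and $|s|>1$ with a sup-norm bound, whereas the paper phrases the same estimates as asymptotics at $0$ and $\infty$.
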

\begin{proof} It is immediate that
\begin{align*}
(|s|^{-k}\sigma_{|s|^{-2}}f)^{(n)}&=|s|^{2n-k}\sigma_{|s|^{-2}}f^{(n)},\\
(|s|^{-k}\sigma_{|s|^{-2}}f)^{(n+1)}&=|s|^{2n+2-k}\sigma_{|s|^{-2}}f^{(n+1)}.
\end{align*}
These functions of $s$ are continuous from $\mathbb{R}^d\backslash\{0\}$ to $L_2(\mathbb{R}).$ Hence, the mapping $s\mapsto |s|^{-k}\sigma_{|s|^{-2}}f$ is continuous from $\mathbb{R}^d\backslash\{0\}$ to $(\dot{W}^{n,2}\cap \dot{W}^{n+1,2})(\mathbb{R}).$ Hence, the mapping $s\mapsto |s|^{-k}(\sigma_{|s|^{-2}}f)|_I$ is continuous from $\mathbb{R}^d\backslash\{0\}$ to $(\dot{W}^{n,2}\cap \dot{W}^{n+1,2})(I)$, and therefore Bochner measurable.
	
Regarding absolute integrability, we have
$$\|(|s|^{-k}\sigma_{|s|^{-2}}f)^{(n)}\|_{L_2(I)}=|s|^{2n-k-1}\|f^{(n)}\|_{L_2(|s|^2I)}\,,$$
$$\|(|s|^{-k}\sigma_{|s|^{-2}}f)^{(n+1)}\|_{L_2(I)}=|s|^{2n-k+1}\|f^{(n+1)}\|_{L_2(|s|^2I)}\,.$$
Thus,
$$\||s|^{-k}\sigma_{|s|^{-2}}f\|_{(\dot{W}^{n,2}\cap \dot{W}^{n+1,2})(I)}\leq (|s|^{2n-k-1}+|s|^{2n-k+1})\|f\|_{(\dot{W}^{n,2}\cap \dot{W}^{n+1,2})(|s|^2I)}\,.$$
As $I$ is bounded away from 0, and $f$ is Schwartz, the latter expression decays rapidly as $|s|\to\infty$. Moreover, as $2n-k\geq0$, the factor $(|s|^{2n-k-1}+|s|^{2n-k+1})$ is of order $\mathcal{O}(|s|^{-1})$ as $|s|\to0$. As $I$ is bounded, and $f^{(n)},f^{(n+1)}$ are continuous at $0$, the factor $\|f\|_{(\dot{W}^{n,2}\cap \dot{W}^{n+1,2})(|s|^2I)}$ is of order $\mathcal O(|s|)$ as $|s|\to0$. 
Hence, the mapping $s\to |s|^{-k}(\sigma_{|s|^{-2}}f)|_I$ is absolutely integrable with respect to $(\dot{W}^{n,2}\cap \dot{W}^{n+1,2})(I)$ and the assertion follows. 
\end{proof}

The following lemma gives a simplified expression for $h=\int_{\mathbb R^d}h_s\,ds$.
\begin{lem}\label{exact integral lemma} Let $f$ be a Schwartz function on $\mathbb{R}.$ Let $n\in\mathbb N$ and $k\in2\mathbb Z_+$ such that $2n\geq k.$ Let $I\subseteq(0,\infty)$ be a bounded open interval separated from $0.$ There exists a $\frac{k}{2}^\text{th}$ order primitive $F_{k,d}$ of $\alpha\mapsto\alpha^{-\frac{d}{2}}$ such that
$$\int_{\mathbb{R}^d}|s|^{-k}(\sigma_{|s|^{-2}}f)|_I\,ds=\int_{\mathbb{R}^d}f^{(\frac{k}{2})}(|s|^2)ds\cdot F_{k,d}|_I\,,$$
where the left-hand side is a Bochner integral with values in $(\dot{W}^{n,2}\cap \dot{W}^{n+1,2})(I),$ and the right-hand side is interpreted as an element of $(\dot{W}^{n,2}\cap \dot{W}^{n+1,2})(I)$ as well.
\end{lem}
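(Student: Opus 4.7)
Denote the Bochner integral on the left-hand side by $H$ and set $C := \int_{\mathbb{R}^d} f^{(k/2)}(|s|^2)\,ds$. The strategy is to identify $H$ with the class of $C\cdot F_{k,d}|_I$ in the quotient space $(\dot{W}^{n,2}\cap\dot{W}^{n+1,2})(I)$ by comparing $n$-th and $(n+1)$-th derivatives in $L_2(I)$. Since the quotient is formed modulo polynomials of degree at most $n-1$ and the defining seminorm is precisely $g\mapsto\|g^{(n)}\|_{L_2(I)}+\|g^{(n+1)}\|_{L_2(I)}$, such a comparison pins down the class of $H$ uniquely.

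Bochner integrability of $s\mapsto h_s := |s|^{-k}(\sigma_{|s|^{-2}} f)|_I$ is immediate from Lemma~\ref{bochner wiener lemma}. The key observation is that, for any Schwartz $\phi$ on $\mathbb{R}^d$ equal to $1$ near $0$, the subtracted Taylor polynomial appearing in Lemma~\ref{Fk lemma},
\[
p_s(\alpha) \;:=\; |s|^{-k}\phi(s)\sum_{j=0}^{k/2-1}\frac{f^{(j)}(0)}{j!}(\alpha|s|^2)^{j},
\]
is, for each fixed $s\neq 0$, a polynomial in $\alpha$ of degree at most $k/2-1\leq n-1$ (using $2n\geq k$). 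Hence $p_s$ represents the zero class in the quotient space, so $h_s$ and $h_s-p_s$ are identical as elements of $(\dot{W}^{n,2}\cap\dot{W}^{n+1,2})(I)$; moreover $p_s^{(n)}=p_s^{(n+1)}=0$ pointwise.

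I would then apply the bounded linear map $\partial^n:(\dot{W}^{n,2}\cap\dot{W}^{n+1,2})(I)\to L_2(I)$, which commutes with the Bochner integral, to get $H^{(n)} = \int_{\mathbb{R}^d} h_s^{(n)}\,ds$ in $L_2(I)$. Pairing against an arbitrary $\psi\in L_2(I)$ and invoking Fubini (justified by the absolute integrability estimates of Lemma~\ref{bochner wiener lemma}) yields the pointwise identification $H^{(n)}(\alpha) = \int_{\mathbb{R}^d} h_s^{(n)}(\alpha)\,ds$ for almost every $\alpha\in I$, where $h_s^{(n)}(\alpha) = |s|^{2n-k} f^{(n)}(\alpha|s|^2)$. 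On the other hand, differentiating the pointwise formula of Lemma~\ref{Fk lemma} $n$ times in $\alpha$ under the integral sign annihilates $p_s$ (since $\deg p_s<n$) and produces $\int_{\mathbb{R}^d} |s|^{2n-k} f^{(n)}(\alpha|s|^2)\,ds = C\cdot F_{k,d}^{(n)}(\alpha)$ for every $\alpha>0$; this differentiation is legitimized by dominated convergence, using Schwartz decay of $f^{(n)}$ at infinity and local integrability near $s=0$ (which in turn uses $2n-k\geq 0$). Combining gives $H^{(n)} = C\cdot F_{k,d}^{(n)}$ in $L_2(I)$; the same argument with $\partial^{n+1}$ gives the analogous equality in degree $n+1$, so $H = C\cdot F_{k,d}|_I$ in the quotient.

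The principal obstacle is the two-sided justification of differentiation under the integral sign --- once abstractly, to commute $\partial^n$ past the Bochner integral, and once pointwise, to differentiate the identity of Lemma~\ref{Fk lemma}. Both reductions use the same dominated estimates that underlie Lemma~\ref{bochner wiener lemma}, so no new analytic input is needed, but the bookkeeping of polynomial degrees, together with the careful handling of the singularity at $|s|=0$ where the Taylor subtraction in Lemma~\ref{Fk lemma} is essential for convergence, requires attention; this is precisely where the hypothesis $2n\geq k$ enters.
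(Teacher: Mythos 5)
Your proposal is correct and follows essentially the same route as the paper: subtract the Taylor polynomial (which is of degree at most $\frac{k}{2}-1\leq n-1$ and hence vanishes in the quotient), use the Bochner integrability from Lemma \ref{bochner wiener lemma} to exchange differentiation with the integral, and identify the resulting pointwise integral via Lemma \ref{Fk lemma}. The only cosmetic difference is that you compare $n$-th derivatives in $L_2(I)$ through the bounded map $\partial^n$, while the paper exhibits $\alpha\mapsto\int\tilde h_s(\alpha)\,ds$ directly as a representative of the Bochner integral; the two formulations are equivalent.
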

\begin{proof} Note that elements of $(\dot{W}^{n,2}\cap \dot{W}^{n+1,2})(I)$ are not exactly functions, but functions modulo polynomials of degree $<n.$ For every $s\in\mathbb R^d\setminus\{0\}$, a particular representative of $|s|^{-k}(\sigma_{|s|^{-2}}f)|_I\in (\dot{W}^{n,2}\cap \dot{W}^{n+1,2})(I)$ is given by the function
$$\tilde h_s:I\to\mathbb R,\quad \tilde h_s(\alpha):=|s|^{-k}\Big(f(\alpha|s|^2)-\sum_{j=0}^{\frac{k}{2}-1}\frac{f^{(j)}(0)}{j!}(\alpha|s|^2)^{j}\cdot\phi(s)\Big).$$
Here, $\phi$ is a Schwartz function on $\mathbb{R}^d$ that equals 1 on a neighbourhood of $0.$
For a given $\alpha\in I,$ we have
$$\big|\tilde h_s(\alpha)\big|=|s|^{-k}\cdot\mathcal{O}((|s|^2)^\frac{k}{2})=\mathcal{O}(|s|^0).$$
Consequently, $s\mapsto \tilde h_s(\alpha)$ is integrable for every $\alpha\in I,$ and the same holds for $s\mapsto \tilde h_s^{(j)}(\alpha)$, $j\leq n$. Recall that $s\mapsto|s|^{-k}(\sigma_{|s|^{-2}}f)|_I$ is Bochner integrable by Lemma \ref{bochner wiener lemma}. 
By using the definition of $(\dot{W}^{n,2}\cap \dot{W}^{n+1,2})(I)$, and subsequently using dominated convergence on $\alpha\mapsto\int \tilde h_s^{(j)}(\alpha)ds$, we obtain for almost every $\alpha\in I$,
\begin{align*}
\Big(\int_{\mathbb{R}^d} |s|^{-k}(\sigma_{|s|^{-2}}f)|_I \,ds\Big)^{(n)}(\alpha)=\int_{\mathbb{R}^d} \tilde h_s^{(n)}(\alpha)\,ds
=\frac{d^n}{d\alpha^n}\Big(\int_{\mathbb{R}^d} \tilde h_s(\alpha)\,ds\Big).
\end{align*}
Therefore, $\alpha\mapsto\int \tilde h_s(\alpha)ds$ is a representative of $\int |s|^{-k}(\sigma_{|s|^{-2}}f)|_I ds$. By Lemma \ref{Fk lemma} we have
$$\int_{\mathbb{R}^d} \tilde h_s(\alpha)\,ds=\int_{\mathbb R^d}f^{(\frac{k}{2})}(|s|^2)ds\cdot F_{k,d}(\alpha)\qquad(\alpha\in I),$$
and so the proof is complete.
\end{proof}

\begin{proof}[Proof of Theorem \ref{moi integration thm}]
Let $I=(\frac{1}{2}\inf{\rm spec}(x),2\sup{\rm spec}(x))$ and $h_s=|s|^{-k}(\sigma_{|s|^{-2}}f)|_I$, $s\in\mathbb{R}^d.$ By Lemma \ref{bochner wiener lemma}, the conditions in Proposition \ref{wiener moi fact} are met for the mapping $s\mapsto h_s.$ Using Proposition \ref{wiener moi fact} (and the fact that $g\mapsto T^x_{g^{[n]}}$ factors through $g\mapsto g|_I$) we have
\begin{align*}
\int_{\mathbb{R}^d}|s|^{-k}T^{x}_{(\sigma_{|s|^{-2}}f)^{[n]}}(b_1,\ldots,b_n)\,ds&=\int_{\mathbb{R}^d}T^{x}_{h_s^{[n]}}(b_1,\ldots,b_n)\,ds\\
&=T^{x}_{h^{[n]}}(b_1,\ldots,b_n),
\end{align*}
where
$$h=\int_{\mathbb{R}^d}h_s\,ds=\int_{\mathbb{R}^d}|s|^{-k}(\sigma_{|s|^{-2}}f)|_I\,ds.$$
By Lemma \ref{exact integral lemma}, we have
$$h=\int_{\mathbb{R}^d}f^{(\frac{k}{2})}(|s|^2)ds\cdot F_{k,d}|_I,$$
completing the proof.
\end{proof}

\subsection{Integral formula relating the base cases of recursion}

We can now prove the main theorem of Section \ref{sct:Integral formula S and T}.
\begin{proof}[Proof of Theorem \ref{ik base of recursion}.] By definition of $S_{s,z}^{n}$ (equation \eqref{eq:def S^m_sz}) we have, for $s\neq0,$
\begin{align*}
&\frac1{2\pi}\int_{-\infty}^\infty|s|^{-k}S_{s,i\lambda}^{n}(b_1,\ldots,b_n)e^{i\lambda}\,d\lambda\\
&=(-1)^n|s|^{2n-k}\cdot\frac1{2\pi} \int_{-\infty}^\infty\frac{1}{x|s|^2+i\lambda}b_1\frac{1}{x|s|^2+i\lambda}\cdots b_n\frac{1}{x|s|^2+i\lambda}e^{i\lambda}\,d\lambda.
\end{align*}
Next,
$$\frac1{2\pi}\int_{-\infty}^\infty\frac{1}{x|s|^2+i\lambda}b_1\frac{1}{x|s|^2+i\lambda}\cdots b_n\frac{1}{x|s|^2+i\lambda}e^{i\lambda}d\lambda=T_{\Psi_s}^{x}(b_1,\ldots,b_n),$$
where
$$\Psi_s(\alpha_0,\ldots,\alpha_n):=\frac1{2\pi}\int_{-\infty}^\infty\frac{1}{\alpha_0|s|^2+i\lambda}\frac{1}{\alpha_1|s|^2+i\lambda}\cdots \frac{1}{\alpha_n|s|^2+i\lambda}e^{i\lambda}d\lambda,$$
for all $\alpha_0,\ldots,\alpha_n>0$. Let $f$ be any Schwartz function that on $(0,\infty)\subseteq\mathbb R$ is defined by
$$f(\alpha):=\frac1{2\pi}\int_{-\infty}^\infty\frac{1}{\alpha+i\lambda}e^{i\lambda}d\lambda=e^{-\alpha},\quad \alpha>0.$$
Using the dilation $(\sigma_{|s|^{-2}}f)(\alpha)=f(|s|^2\alpha)$ and computing the divided differences of
$\alpha\mapsto\frac{1}{\alpha|s|^2+i\lambda},$
 we obtain
$${\Psi_s}|_{(0,\infty)^{n+1}}=(-1)^n|s|^{-2n}(\sigma_{|s|^{-2}}f)^{[n]}|_{(0,\infty)^{n+1}}.$$
Therefore,
\begin{align}\label{eq:S and T}
\frac1{2\pi}\int_{-\infty}^\infty |s|^{-k}S_{s,i\lambda}^{n}(b_1,\ldots,b_n)e^{i\lambda}d\lambda=|s|^{-k}T^{x}_{(\sigma_{|s|^{-2}}f)^{[n]}}(b_1,\ldots,b_n).
\end{align}
By Theorem \ref{moi integration thm}, and the fact that
$$\int_{\mathbb R^d}f^{(\frac{k}{2})}(|s|^2)ds=(-1)^{\frac{k}{2}}\int_{\mathbb R^d}e^{-|s|^2}\,ds=(-1)^\frac{k}{2}\pi^{\frac{d}{2}},$$
the assertion follows.
\end{proof}

\section{Proof of the main theorem}\label{sct:pf main thm}
We can summarise the previous section in the following way.
\begin{cor}\label{cor:S and bT}
Let $x\in C^\infty(\mathbb T^d_\theta)$ be positive and invertible. For all $m\in\mathbb N$, $\mathbf B_1,\ldots,\mathbf B_m\in\mathcal{X}$ and $k\in 2\mathbb Z_+$ such that $2m\geq k$, we have
\begin{align*}
\frac{1}{2\pi}\int_{\mathbb{R}^d}\Big(\int_{-\infty}^\infty|s|^{-k}S^m_{s,i\lambda}(\mathbf B_1,\ldots,\mathbf B_m)e^{i\lambda}d\lambda\Big)ds=(-1)^{\frac{k}{2}}\pi^{\frac{d}{2}}\cdot \mathbf T^{x,m}_{F_{k,d}}(\mathbf B_1,\ldots,\mathbf B_m),
\end{align*}
where $F_{k,d}$ is any $\frac{k}{2}^{\text{th}}$ primitive of $\alpha\mapsto\alpha^{-\frac{d}{2}}$.
\end{cor}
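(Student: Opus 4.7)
The plan is a straightforward induction matching the recursive structure on both sides. By Lemmas \ref{zeroth workhorse lemma} and \ref{first workhorse lemma}, $S^m_{s,z}$ obeys exactly the same recursive rules that define $\mathbf{T}^{x,m}_{F_{k,d}}$ in Definition \ref{def:bT}, and at the ``leaf level'' (all $\mathbf{B}_j \in C^\infty(\mathbb{T}^d_\theta)$) the desired identity is precisely Theorem \ref{ik base of recursion}. First I would use multilinearity on both sides to reduce to the case $\mathbf{B}_j = b_j \mathbf{D}^{\alpha_j}$ with $b_j \in C^\infty(\mathbb{T}^d_\theta)$ and $\alpha_j \in \mathbb{Z}_+^d$. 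Then I would induct along the same well-founded order as in Lemma \ref{lem:T well defined} — concretely, the lexicographic pair (total $\mathbf{D}$-degree $\sum_j |\alpha_j|_1$, $m$ minus the index of the rightmost slot carrying a $\mathbf{D}$): applications of Lemmas \ref{zeroth workhorse lemma} and \ref{first workhorse lemma} either strictly decrease the total degree or push a $\mathbf{D}$ one slot to the right, so this order is well-founded.

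The base case $\sum_j|\alpha_j|_1 = 0$ is exactly Theorem \ref{ik base of recursion}, since Definition \ref{def:bT} stipulates $\mathbf{T}^{x,m}_{F_{k,d}}(b_1,\ldots,b_m) = T^x_{F_{k,d}^{[m]}}(b_1,\ldots,b_m)$ on $C^\infty(\mathbb{T}^d_\theta)^{\times m}$. For the inductive step, I would locate the rightmost slot $\ell$ whose $\mathbf{B}_\ell$ carries a $\mathbf{D}$, writing $\mathbf{B}_\ell = \tilde{\mathbf{B}}_\ell \mathbf{D}_i$; by choice of $\ell$, the trailing entries $\mathbf{B}_{\ell+1},\ldots,\mathbf{B}_m$ automatically lie in $C^\infty(\mathbb{T}^d_\theta)$. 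Applying Lemma \ref{zeroth workhorse lemma} (or Lemma \ref{first workhorse lemma} when $\ell = m$) pointwise in $(s,\lambda)$ decomposes $S^m_{s,i\lambda}(\mathbf{B}_1,\ldots,\mathbf{B}_m)$ into three (resp. one) terms of strictly smaller rank. Multiplying by $|s|^{-k}e^{i\lambda}/(2\pi)$, exchanging the finite sum with the iterated integral $\int_{\mathbb{R}^d}\int_{-\infty}^\infty$, and applying the inductive hypothesis to each resulting term produces $(-1)^{k/2}\pi^{d/2}$ times the right-hand side of \eqref{eq:def bT 2} (resp. \eqref{eq:def bT 3}), which reassembles by definition to $(-1)^{k/2}\pi^{d/2}\mathbf{T}^{x,m}_{F_{k,d}}(\mathbf{B}_1,\ldots,\mathbf{B}_m)$.

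The only point requiring care — and which I expect to be routine rather than a genuine obstacle — is justifying the interchange of the finite three-term sum with the iterated integral. Since the sum has at most three terms, this reduces to confirming that each summand yields a convergent iterated integral; this follows a posteriori from the inductive hypothesis, since each summand is identified with a well-defined element of $C^\infty(\mathbb{T}^d_\theta)$. A priori integrability can alternatively be read off by propagating the Bochner integrability estimates from Lemma \ref{bochner wiener lemma} and Proposition \ref{wiener moi fact}, which already drive the proof of Theorem \ref{ik base of recursion}. Either way, no analytical input beyond what was developed in Section \ref{sct:Integral formula S and T} should be needed.
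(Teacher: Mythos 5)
Your proposal is correct and follows essentially the same route as the paper, whose proof simply invokes the matching recursive properties of $S^m_{s,z}$ (Lemmas \ref{zeroth workhorse lemma} and \ref{first workhorse lemma}) and $\mathbf T^{x,m}_{F_{k,d}}$ (Definition \ref{def:bT}) together with the base case, Theorem \ref{ik base of recursion}. You merely make explicit the well-founded induction and the (harmless) interchange of the finite sum with the iterated integral, details the paper leaves implicit.
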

\begin{proof}
This follows from the matching recursive properties of $\mathbf T^{x,m}_f$ (Definition \ref{def:bT}) and $S^m_{s,z}$ (Lemmas \ref{zeroth workhorse lemma} and \ref{first workhorse lemma}) and the base case, Theorem \ref{ik base of recursion}.
\end{proof}
The above corollary is the final ingredient needed for the proof of our main theorem.
\begin{proof}[Proof of Theorem \ref{thm: recursive formula}.]

By the definition of $I_k(P)$ as given in Section \ref{defnot section} we have
\begin{align*}
I_k(P)=&\frac{1}{2\pi}\int_{\mathbb R^d}\Big(\int_{-\infty}^\infty\sum_{\frac{k}{2}\leq m\leq k}(-1)^m\sum_{\substack{\mathscr{A}\subseteq\{1,\ldots,m\}\\|\mathscr{A}|=2m-k}}\frac{1}{x|s|^2+z}\\
&\cdot W^\mathscr{A}_1(s)\Big(\frac{1}{x|s|^2+z}\cdots W^\mathscr{A}_m(s)\Big(\frac{1}{x|s|^2+z}\Big)\cdots\Big)e^{i\lambda}\,d\lambda\Big) ds.
\end{align*}
Using the definition of $S_{s,z}^m$ (see \eqref{eq:def S^m_sz}), and introducing elements $\mathbf W_j^\mathscr{A}(s)\in\mathcal{X}$ for which $\pi(\mathbf W_j^\mathscr{A}(s))=W_j^\mathscr{A}(s)$, we rewrite the latter expression as
$$I_k(P)=\frac{1}{2\pi}\int_{\mathbb R^d}\Big(\int_{-\infty}^\infty\sum_{\frac{k}{2}\leq m\leq k}\sum_{\substack{\mathscr{A}\subseteq\{1,\ldots,m\}\\|\mathscr{A}|=2m-k}} \frac{1}{|s|^{2m}}S^m_{s,i\lambda}(\mathbf{W}_1^\mathscr{A}(s),\ldots,\mathbf{W}_m^\mathscr{A}(s))e^{i\lambda}\,d\lambda\Big) ds.$$
By expressing $\mathbf W_j^\mathscr{A}(s)$ in terms of $\mathbf W_j^{\mathscr{A},\iota}$ of \eqref{eq:bW} (see also \eqref{eq:W notation}) we obtain
$$S^m_{s,z}(\mathbf{W}_1^\mathscr{A}(s),\ldots,\mathbf{W}_m^\mathscr{A}(s))=\sum_{\iota:\mathscr{A}\to\{1,\ldots,d\}}\Big(\prod_{j\in\mathscr{A}}s_{\iota(j)}\Big)S^m_{s,z}(\mathbf{W}_1^{\mathscr{A},\iota},\ldots,\mathbf{W}_m^{\mathscr{A},\iota}).$$
Thus,
\begin{align*}
I_k(P)=&\frac{1}{2\pi}\sum_{\frac{k}{2}\leq m\leq k}\,\sum_{\substack{\mathscr{A}\subseteq\{1,\ldots,m\}\\|\mathscr{A}|=2m-k}}\,\sum_{\iota:\mathscr{A}\to\{1,\ldots,d\}}\\
&\qquad\qquad\qquad\quad\int_{\mathbb R^d}\Big(\frac{\prod_{j\in\mathscr{A}}s_{\iota(j)}}{|s|^{2m}}\int_{-\infty}^\infty S^m_{s,i\lambda}(\mathbf{W}_1^{\mathscr{A},\iota},\ldots,\mathbf{W}_m^{\mathscr{A},\iota})e^{i\lambda}\,d\lambda\Big) ds.
\end{align*}

Since the mapping
$$s\mapsto \int_{-\infty}^\infty S^m_{s,i\lambda}(\mathbf{W}_1^{\mathscr{A},\iota},\ldots,\mathbf{W}_m^{\mathscr{A},\iota})e^{i\lambda}d\lambda,\quad s\in\mathbb{R}^d,$$
is a function of $|s|,$ we can apply the general formula
\begin{align}\label{polar salpha formula}
\int_{\mathbb{R}^d}\frac{s^{n}}{|s|^{|n|_1}}g(|s|)ds&=\frac1{{\rm Vol}(\mathbb{S}^{d-1})}\int_{\mathbb{S}^{d-1}}u^{n}du\cdot \int_{\mathbb{R}^d}g(|s|)ds\\
&\equiv c_d^{(\iota)}\int_{\mathbb{R}^d}g(|s|)ds,\nonumber
\end{align}
for $n\in\mathbb Z_+^d$ satisfying $n_j=|\iota^{-1}(\{j\})|$, and find that
\begin{align*}
I_k(P)=&\sum_{\frac{k}{2}\leq m\leq k}\,\sum_{\substack{\mathscr{A}\subseteq\{1,\ldots,m\}\\|\mathscr{A}|=2m-k}}\,\sum_{\iota:\mathscr{A}\to\{1,\ldots,d\}}c_d^{(\iota)}\frac{1}{2\pi}\\
&\cdot\int_{\mathbb R^d}\Big(\int_{-\infty}^\infty |s|^{-k}S^m_{s,i\lambda}(\mathbf{W}_1^{\mathscr{A},\iota},\ldots,\mathbf{W}_m^{\mathscr{A},\iota})e^{i\lambda}\,d\lambda\Big) ds.
\end{align*}
By applying Corollary \ref{cor:S and bT}, we obtain our main theorem.
\end{proof}

\section{The case $k=2$: `scalar curvature'}\label{sct:k=2}
Although the formula in Corollary \ref{cor:main thm k=2} can be obtained directly from the computer (as done in Appendix \ref{appendix A}) we will first give an explicit proof by hand, demonstrating the simplicity of the algorithm. We fix $d\in\mathbb N_{\geq2}$ and $F_{2,d}$ as in Remark \ref{rem:Fkd}

\begin{lem}\label{short expression for I2} For any $P$ of the form \eqref{eq:P} with positive invertible $x$, \eqref{eq:main thm} gives
\begin{equation}\label{sei2 eq}
-\pi^{-\frac{d}{2}}I_2(P)=\mathbf T^{x,1}_{F_{2,d}}(\mathbf P)+\frac1d\sum_{i=1}^d\mathbf T^{x,2}_{F_{2,d}}(\mathbf A_i,\mathbf A_i).
\end{equation}
\end{lem}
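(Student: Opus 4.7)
The plan is simply to specialize Theorem~\ref{thm: recursive formula} to $k=2$ and enumerate the terms. Since $(-1)^{k/2}=-1$ and the outer sum runs over $\frac{k}{2}\le m\le k$, only $m=1$ and $m=2$ contribute, so the task reduces to identifying, for each of these two values, the admissible subsets $\mathscr A\subseteq\{1,\ldots,m\}$ with $|\mathscr A|=2m-k$, the admissible maps $\iota:\mathscr A\to\{1,\ldots,d\}$, and the corresponding coefficients $c_d^{(\iota)}$.

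For $m=1$ one has $|\mathscr A|=0$, so $\mathscr A=\emptyset$ and $\iota$ is the empty function. In this case the product appearing in the definition of $c_d^{(\iota)}$ is empty, hence $c_d^{(\iota)}=1$, and by \eqref{eq:bW} we have $\mathbf W_1^{\emptyset,\iota}=\mathbf P$. This contributes a single term $\mathbf T^{x,1}_{F_{2,d}}(\mathbf P)$.

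For $m=2$ one has $|\mathscr A|=2$, so $\mathscr A=\{1,2\}$ and $\iota$ runs over all maps $\{1,2\}\to\{1,\ldots,d\}$. By Remark~\ref{rem:c_d}, $c_d^{(\iota)}$ vanishes unless all multiplicities $n_j=|\iota^{-1}(\{j\})|$ are even; since $\sum_j n_j=2$, the only surviving case is $n_i=2$ for some $i$ and $n_j=0$ otherwise, i.e. $\iota(1)=\iota(2)=i$. For such $\iota$, Remark~\ref{rem:c_d} gives
$$c_d^{(\iota)}=\frac{(d-2)!!\,(2-1)!!\,\prod_{j\neq i}(-1)!!}{(2+d-2)!!}=\frac{(d-2)!!}{d!!}=\frac{1}{d}.$$
Since $\mathbf W_1^{\mathscr A,\iota}=\mathbf W_2^{\mathscr A,\iota}=\mathbf A_i$ by \eqref{eq:bW}, the $m=2$ contribution is $\frac{1}{d}\sum_{i=1}^d \mathbf T^{x,2}_{F_{2,d}}(\mathbf A_i,\mathbf A_i)$.

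Summing the two contributions and dividing by $-\pi^{d/2}$ yields exactly \eqref{sei2 eq}. There is no real obstacle; the only mild point of care is the bookkeeping of the definition of $c_d^{(\iota)}$ (in particular the convention $(-1)!!=1$) when computing the $m=2$ constant.
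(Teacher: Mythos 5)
Your proposal is correct and follows essentially the same route as the paper: specialize Theorem \ref{thm: recursive formula} to $k=2$, note that $m=1$ forces $\mathscr A=\emptyset$ with $c_d^{(\emptyset)}=1$ and $\mathbf W_1=\mathbf P$, while $m=2$ forces $\mathscr A=\{1,2\}$ with $c_d^{(i,j)}=\delta_{i,j}/d$ (the paper writes this Kronecker-delta form directly from Remark \ref{rem:c_d}, which is exactly your parity argument), giving the terms $\frac1d\mathbf T^{x,2}_{F_{2,d}}(\mathbf A_i,\mathbf A_i)$. The bookkeeping of the double factorials, including $(-1)!!=1$, matches the paper's computation.
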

\begin{proof} Our main theorem (Theorem \ref{thm: recursive formula}) in the case $k=2$ becomes
\begin{align}\label{eq:I_2 0}
I_2(P)=-\sum_{1\leq m\leq 2}\,\sum_{\substack{\mathscr A\subseteq\{1,\ldots,m\}\\|\mathscr A|=2m-2}}\,\sum_{\iota:\mathscr{A}\to\{1,\ldots,d\}} c_d^{(\iota)}\pi^{\frac{d}{2}}\mathbf T^{x,m}_{F_{2,d}}(\mathbf W^{\mathscr{A},\iota}_1,\ldots,\mathbf W^{\mathscr{A},\iota}_m).
\end{align}
If $m=1$ then $|\mathscr{A}|=2m-2$ implies $\mathscr{A}=\emptyset$; we then denote the unique function $\iota:\mathscr{A}\to\{1,\ldots,d\}$ by $\iota=\emptyset$. If $m=2$ then $|\mathscr{A}|=2m-2$ implies $\mathscr{A}=\{1,2\}$; we then identify $\iota:\{1,2\}\to\{1,\ldots,d\}$ with $(i,j)=(\iota(1),\iota(2))$ for $i,j\in\{1,\ldots,d\}$.
We obtain
\begin{align*}
I_2(P)=&-c_d^{(\emptyset)}\pi^{\frac{d}{2}}\mathbf T^{x,1}_{F_{2,d}}(\mathbf W^{\emptyset,\emptyset}_1)-\sum_{i,j=1}^d c_d^{(i,j)}\pi^{\frac{d}{2}}\mathbf T^{x,2}_{F_{2,d}}(\mathbf W^{\mathscr{A},(i,j)}_1,\mathbf W^{\mathscr{A},(i,j)}_2)\\
=&-c_d^{(\emptyset)}\pi^{\frac{d}{2}}\mathbf T^{x,1}_{F_{2,d}}(\mathbf P)-\sum_{i,j=1}^d c_d^{(i,j)}\pi^{\frac{d}{2}}\mathbf T^{x,2}_{F_{2,d}}(\mathbf A_i,\mathbf A_j).
\end{align*}
Remark \ref{rem:c_d} gives $c_d^{(\emptyset)}=\frac{(d-2)!!}{(0+d-2)!!}=1$ and $c_d^{(i,j)}=\delta_{i,j}\frac{(d-2)!!}{(2+d-2)!!}=\delta_{i,j}\frac{1}{d}$. We therefore obtain \eqref{sei2 eq}.
\end{proof}

\begin{proof}[Proof of Corollary \ref{cor:main thm k=2}]
We work out the first term on the right hand side of \eqref{sei2 eq} by using \eqref{eq:bf Ai and bf P} and Definition \ref{def:bT}. In a similar way to Example \ref{ex:bT} we obtain
\begin{align*}
\mathbf T^{x,1}_{F_{2,d}}(\mathbf P)=&\mathbf T^{x,1}_{F_{2,d}}(x\sum_{i=1}^d\mathbf D_i^2+\sum_{i=1}^da_i\mathbf D_i+a)\\
=&\sum_{i=1}^d\Big(\mathbf T^{x,2}_{F_{2,d}}(x\mathbf D_i,D_ix)+\mathbf T^{x,2}_{F_{2,d}}(a_i,D_ix)\Big)+\mathbf T^{x,1}_{F_{2,d}}(a)\\
=&\sum_{i=1}^d\Big(\mathbf T^{x,3}_{F_{2,d}}(x,D_ix,D_ix)+\mathbf T^{x,2}_{F_{2,d}}(x,D_iD_ix)+\mathbf T^{x,2}_{F_{2,d}}(x,(D_ix)\mathbf D_i)\\
&+T^{x}_{F_{2,d}^{[2]}}(a_i,D_ix)\Big)+T^{x}_{F_{2,d}^{[1]}}(a)\\
=&\sum_{i=1}^d\Big(2T^{x}_{F_{2,d}^{[3]}}(x,D_ix,D_ix)+T^{x}_{F_{2,d}^{[2]}}(a_i,D_ix)\Big)+T^{x}_{F_{2,d}^{[2]}}(x,\Delta x)+T^{x}_{F_{2,d}^{[1]}}(a).
\end{align*}
Once you get the hang of it, working out the second term is child's play. For all $i\in\{1,\ldots,d\}$ we obtain
\begin{align*}
&\mathbf T^{x,2}_{F_{2,d}}(\mathbf A_i,\mathbf A_i)\\
&=\mathbf T^{x,2}_{F_{2,d}}(2x\mathbf D_i+a_i,2x\mathbf D_i+a_i)\\
&=4\mathbf T^{x,3}_{F_{2,d}}(x\mathbf D_i,x,D_ix)+2\mathbf T^{x,2}_{F_{2,d}}(x\mathbf D_i,a_i)\\
&\quad+2T^x_{F_{2,d}^{[3]}}(a_i,x,D_ix)+T^x_{F_{2,d}^{[2]}}(a_i,a_i)\\
&=4\Big(\mathbf T^{x,4}_{F_{2,d}}(x,D_ix,x,D_ix)+\mathbf T^{x,3}_{F_{2,d}}(x,D_ix,D_ix)+\mathbf T^{x,4}_{F_{2,d}}(x,x,D_ix,D_ix)\\
&\quad+\mathbf T^{x,3}_{F_{2,d}}(x,x,D_iD_ix)+\mathbf T^{x,4}_{F_{2,d}}(x,x,D_ix,D_ix)\Big)+2\Big(\mathbf T^{x,3}_{F_{2,d}}(x,D_ix,a_i)\\
&\quad+\mathbf T^{x,2}_{F_{2,d}}(x,D_ia_i)+\mathbf T^{x,3}_{F_{2,d}}(x,a_i,D_ix)\Big)+2T^x_{F_{2,d}^{[3]}}(a_i,x,D_ix)+T^x_{F_{2,d}^{[2]}}(a_i,a_i)\\
&=4T^{x}_{F_{2,d}^{[4]}}(x,D_ix,x,D_ix)+4T^{x}_{F_{2,d}^{[3]}}(x,D_ix,D_ix)+8T^{x}_{F_{2,d}^{[4]}}(x,x,D_ix,D_ix)\\
&\quad+4T^{x}_{F_{2,d}^{[3]}}(x,x,D_i^2x)+2T^{x}_{F_{2,d}^{[3]}}(x,D_ix,a_i)+2T^{x}_{F_{2,d}^{[2]}}(x,D_ia_i)\\
&\quad+2T^{x}_{F_{2,d}^{[3]}}(x,a_i,D_ix)+2T^x_{F_{2,d}^{[3]}}(a_i,x,D_ix)+T^x_{F_{2,d}^{[2]}}(a_i,a_i).
\end{align*}
Inserting both results into \eqref{sei2 eq} yields Corollary \ref{cor:main thm k=2}.
\end{proof}

\subsection{Conjugation property}
The goal of this subsection is to show that $I_2$ satisfies the conjugation property of Proposition \ref{lem:conj property}, which is needed in the proof of Theorem \ref{cor:main thm k=d=2}.

Firstly, we define a right action of $C^\infty(\mathbb T^d_\theta)$ on $\mathcal X$ by extending
\begin{align}\label{eq:right action}
\mathbf D_i b:=D_ib+b\mathbf D_i\qquad (b\in C^\infty(\mathbb T^d_\theta)),
\end{align}
to a $C^\infty(\mathbb T^d_\theta)$-bimodule structure on $\mathcal X$ in the obvious way. (To be precise, $b\mathbf D^\alpha c=\sum_{\beta+\gamma=\alpha}bD^\beta c\mathbf D^\gamma$ if $\alpha\in\{0,1\}^d$, which one can assume without loss of generality.) One easily checks well-definedness of this structure. Moreover, using the notation introduced in \eqref{eq:def pi}, one easily checks that
\begin{align}\label{eq:pi and bimodule structure}
\pi(b\mathbf D^\alpha c)=\lambda_l(b)D^\alpha\lambda_l(c),\end{align}
for all $b,c\in C^\infty(\mathbb T^d_\theta)$ and $\alpha\in\mathbb Z_+^d$.

\begin{lem}\label{17 first lemma} Let $x,y\in C^\infty(\mathbb T^d_\theta)$ with $x$ self-adjoint, $y$ invertible, and $[x,y]=0$. We have
\begin{align*}
\mathbf{T}_{F_{2,d}}^{x,2}(y^{-1}x\mathbf{D}_iy,y^{-1}x\mathbf{D}_iy)=&y^{-1}\mathbf{T}_{F_{2,d}}^{x,2}(x\mathbf{D}_i,x\mathbf{D}_i)y+\frac12y^{-1}\cdot x^2F_{2,d}''(x)\cdot D_i^2y\\
&+y^{-1}\Big(\mathbf T^{x,2}_{F_{2,d}}(x\mathbf D_i,x)+\mathbf T^{x,2}_{F_{2,d}}(x,x\mathbf D_i)\Big)\cdot D_iy.
\end{align*}
\end{lem}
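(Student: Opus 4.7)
The starting observation is that the bimodule relation $\mathbf{D}_i y = D_i y + y\mathbf{D}_i$, combined with $[x, y] = 0$ (so $y^{-1} xy = x$), yields
$$y^{-1} x \mathbf{D}_i y = x \mathbf{D}_i + xz, \qquad z := y^{-1} D_i y \in C^\infty(\mathbb T^d_\theta),$$
and in particular $D_i y = yz$. My plan is to expand both sides of the asserted identity into sums of ordinary multiple operator integrals $T^x_{F_{2,d}^{[n]}}(\dots)$ with arguments in $C^\infty(\mathbb T^d_\theta)$, using the recursive rules of Definition \ref{def:bT} (as was done in the proof of Corollary \ref{cor:main thm k=2}), and then match the two expansions term by term.

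On the left-hand side, bilinearity splits the expression into $\mathbf T^{x,2}_{F_{2,d}}(x\mathbf{D}_i, x\mathbf{D}_i)$ plus three cross terms $\mathbf T^{x,2}_{F_{2,d}}(x\mathbf{D}_i, xz)$, $\mathbf T^{x,2}_{F_{2,d}}(xz, x\mathbf{D}_i)$, and $\mathbf T^{x,2}_{F_{2,d}}(xz, xz)$. The critical cross term $\mathbf T^{x,2}_{F_{2,d}}(x\mathbf{D}_i, xz)$ expands via Definition \ref{def:bT} to include $T^x_{F_{2,d}^{[2]}}(x, D_i(xz))$; since $D_i z = y^{-1} D_i^2 y - z^2$, this is the sole source of a $D_i^2 y$-contribution. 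On the right-hand side, the outer $y^{-1}(\,\cdot\,)y$ can be pushed through each MOI in the expansion of $\mathbf T^{x,2}_{F_{2,d}}(x\mathbf{D}_i, x\mathbf{D}_i)$ via the conjugation formula
$$y^{-1} T^x_\phi(V_1, \dots, V_n) y = T^x_\phi(y^{-1} V_1 y, \dots, y^{-1} V_n y),$$
which is valid because $[x, y] = 0$. Differentiating $[x, y] = 0$ then gives $y^{-1}(D_i x) y = D_i x + [z, x]$ (using $D_i y = yz$), together with a longer but similar identity for $y^{-1}(D_i^2 x) y$.

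Matching the two expansions then proceeds by repeated use of the MOI commutator identities \eqref{eq:MOI comm 3}. At each order in $z$ (equivalently, in $D_i y$), the contributions have to conspire: the pieces involving $[z, x]$ from $y^{-1} \mathbf T^{x,2}_{F_{2,d}}(x\mathbf{D}_i, x\mathbf{D}_i) y$ combine with the cross terms to form the factor $y^{-1}\bigl[\mathbf T^{x,2}_{F_{2,d}}(x\mathbf{D}_i, x) + \mathbf T^{x,2}_{F_{2,d}}(x, x\mathbf{D}_i)\bigr] D_i y$, while all $(D_i y)^2$-type contributions (coming from $\mathbf T^{x,2}_{F_{2,d}}(xz,xz)$ and from squaring the $[z,x]$ corrections) must cancel, leaving only the $D_i^2 y$-contribution. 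This residual collapses via the diagonal identity $F_{2,d}^{[2]}(\alpha, \alpha, \alpha) = \tfrac12 F_{2,d}''(\alpha)$ (so that $T^x_{F_{2,d}^{[2]}}(x, x) = \tfrac12 x^2 F_{2,d}''(x)$) to produce exactly $\tfrac12 y^{-1} x^2 F_{2,d}''(x) D_i^2 y$. The main obstacle is the combinatorial bookkeeping: both sides unfold into many MOIs with divided differences $F_{2,d}^{[n]}$ for $n \in \{2, 3, 4\}$, and verifying the above cancellations and reassemblies requires patient case analysis alongside repeated application of the commutator identities.
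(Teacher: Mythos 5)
Your overall strategy (expand both sides into ordinary MOIs via Definition \ref{def:bT}, push $y^{-1}(\cdot)y$ through using $[x,y]=0$, and match terms with the commutator rules \eqref{eq:MOI comm 3}) is plausible in principle, but as written it is a plan rather than a proof: the entire content of the lemma is exactly the reassembly you describe with the phrases ``have to conspire'' and ``must cancel'', and none of it is carried out, which makes the argument circular --- you infer the cancellations from the identity you are trying to prove. Moreover, at least one of the concrete claims you do make is wrong as stated: the $D_i^2y$-contribution does not ``collapse via the diagonal identity'' to $\tfrac12 y^{-1}x^2F_{2,d}''(x)D_i^2y$. In your expansion the second derivative sits \emph{inside} the operator integral, as $T^x_{F_{2,d}^{[2]}}(x,\,x\,y^{-1}D_i^2y)$, which equals $T^x_{\psi}(y^{-1}D_i^2y)$ with $\psi(\alpha_0,\alpha_1)=\alpha_0^2F_{2,d}^{[2]}(\alpha_0,\alpha_0,\alpha_1)$; since $y^{-1}D_i^2y$ does not commute with $x$, this differs from $\tfrac12x^2F_{2,d}''(x)\cdot y^{-1}D_i^2y$ by a correction of the form $T^x_{\,\cdot\,}([x,y^{-1}D_i^2y])$ coming from \eqref{eq:MOI comm 3}. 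That correction must in turn cancel against commutator terms produced when you conjugate $\mathbf T^{x,3}_{F_{2,d}}(x,x,D_i^2x)$ (note that $y^{-1}(D_i^2x)y$ also brings in $y^{-1}D_i^2y$ through $D_iz$), so the bookkeeping is strictly heavier than your sketch acknowledges; the same issue arises for every $z$-type argument that has to be moved to the rightmost position to produce the factors $D_iy$ standing outside the MOIs.

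For comparison, the paper's proof avoids all term-matching by working at the level of $S^2_{s,z}$ (see \eqref{eq:def S^m_sz}): because $[x,y]=0$, the factors $y^{-1}$ and $y$ slide past the resolvents, giving $S^2_{s,z}(y^{-1}x\mathbf D_iy,\,y^{-1}x\mathbf D_iy)=y^{-1}S^2_{s,z}(x\mathbf D_i,\,x\mathbf D_iy)$, and a single application of the Leibniz rule to $D_i\big(\frac{x}{x|s|^2+z}D_i\frac{y}{x|s|^2+z}\big)$ produces exactly the four terms of the lemma with $D_iy$ and $D_i^2y$ already standing to the \emph{right} of the $S$-expressions; integrating and invoking Corollary \ref{cor:S and bT}, together with $T^x_{F_{2,d}^{[2]}}(x,x)=\tfrac12x^2F_{2,d}''(x)$, then finishes the proof. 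If you want to keep your purely algebraic route, you must actually perform the full expansion of both sides into MOIs and exhibit the cancellations, including the extra commutator terms described above; as it stands, the proposal has a genuine gap at precisely the step that constitutes the lemma.
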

\begin{proof}

Using \eqref{eq:pi and bimodule structure} in the definition of $S_{s,z}^2$ (i.e., \eqref{eq:def S^m_sz}), it follows that
$$S^2_{s,z}(y^{-1}x\mathbf{D}_iy,y^{-1}x\mathbf{D}_iy)=y^{-1}S^2_{s,z}(x\mathbf{D}_i,x\mathbf{D}_iy).$$
Again using the definition of $S_{s,z}^2,$ we find
$$S^2_{s,z}(x\mathbf{D}_i,x\mathbf{D}_i y)=\frac{|s|^4x}{x|s|^2+z}D_i\Big(\frac{x}{x|s|^2+z}D_i\frac{y}{x|s|^2+z}\Big).$$
By the Leibniz rule, we have
\begin{align*}
D_i&\Big(\frac{x}{x|s|^2+z}D_i\frac{y}{x|s|^2+z}\Big)\\
&=D_i\Big(\frac{x}{(x|s|^2+z)^{2}}\cdot D_iy\Big)+D_i\Big(\frac{x}{x|s|^2+z}D_i\Big(\frac1{x|s|^2+z}\Big)y\Big)\\
&=\frac{x}{(x|s|^2+z)^{2}}\cdot D_i^2y+D_i\Big(\frac{x}{(x|s|^2+z)^{2}}\Big)\cdot D_iy+\\
&\quad+\frac{x}{x|s|^2+z}D_i\Big(\frac1{x|s|^2+z}\Big)\cdot D_iy+D_i\Big(\frac{x}{x|s|^2+z}D_i\Big(\frac1{x|s|^2+z}\Big)\Big)\cdot y.
\end{align*}
Again appealing to the definition \eqref{eq:def S^m_sz} of $S_{s,z}^2,$ we write
\begin{align*}
&S^2_{s,z}(x\mathbf{D}_i,x\mathbf{D}_iy)\\
&\quad=S^2_{s,z}(x,x)D_i^2y+S^2_{s,z}(x\mathbf D_i,x)D_iy+S^2_{s,z}(x,x\mathbf D_i)D_iy+S^2_{s,z}(x\mathbf D_i,x\mathbf D_i)y.
\end{align*}
By doubly integrating both sides of the above equality, applying Corollary \ref{cor:S and bT} to the resulting terms, and using that
$$T_{F_{2,d}^{[2]}}^x(x,x)=x^2\frac{1}{2}F_{2,d}''(x),$$
the lemma follows.
\end{proof}

\begin{lem}\label{17 second lemma} Let $x,y\in C^\infty(\mathbb T^d_\theta)$ be invertible with $x\geq0$ and $[x,y]=0$. We have
$$\mathbf{T}_{F_{2,d}}^{x,1}(y^{-1}x\mathbf{\Delta}y)=y^{-1}xF_{2,d}'(x)\Delta y+2\sum_{i=1}^dy^{-1}\mathbf{T}_{F_{2,d}}^{x,1}(x\mathbf D_i)D_iy+y^{-1}\mathbf T_{F_{2,d}}^{x,1}(x\mathbf{\Delta})y.$$
\end{lem}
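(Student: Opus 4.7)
The plan is to adapt the strategy of Lemma \ref{17 first lemma} to the single-argument, Laplacian-valued case. Set $R:=(x|s|^2+z)^{-1}$, which lies in $C^\infty(\mathbb T^d_\theta)$ and commutes with $y$ since $[x,y]=0$. Using \eqref{eq:def S^m_sz} and \eqref{eq:pi and bimodule structure}, I would first pull $y^{-1}$ outside the left-most resolvent:
$$S^1_{s,z}(y^{-1}x\mathbf\Delta y) = -|s|^2 R\,y^{-1}x\,\Delta(yR) = y^{-1}\cdot S^1_{s,z}(x\mathbf\Delta y).$$

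Next, I would expand $\Delta(Ry)=\sum_i D_i^2(Ry)$ by the Leibniz rule, taking care to always keep $y$ (when undifferentiated) on the \emph{right} of the resolvents. Two applications of Leibniz yield
$$\Delta(Ry) = (\Delta R)\,y + 2\sum_{i=1}^d(D_iR)(D_iy) + R\,\Delta y.$$
Substituting this into $S^1_{s,z}(x\mathbf\Delta y)=-|s|^2 Rx\,\Delta(Ry)$ and inserting $D_iR=-|s|^2 R(D_ix)R$ into the middle sum gives the three-term decomposition
$$S^1_{s,z}(x\mathbf\Delta y) = S^1_{s,z}(x\mathbf\Delta)\cdot y + 2\sum_{i=1}^d S^2_{s,z}(x,D_ix)\cdot D_iy - |s|^2 RxR\cdot\Delta y.$$

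I would then integrate each of these three terms against $\frac{1}{2\pi}|s|^{-2}e^{i\lambda}\,d\lambda\,ds$. Corollary \ref{cor:S and bT} directly handles the first two, producing $-\pi^{d/2}\mathbf T^{x,1}_{F_{2,d}}(x\mathbf\Delta)\cdot y$ and $-2\pi^{d/2}\sum_i\mathbf T^{x,2}_{F_{2,d}}(x,D_ix)\cdot D_iy$; the latter equals $-2\pi^{d/2}\sum_i\mathbf T^{x,1}_{F_{2,d}}(x\mathbf D_i)\cdot D_iy$ by \eqref{eq:def bT 3}. The third term is a pure function of $x$ multiplied by $\Delta y$, so Corollary \ref{cor:S and bT} does not apply directly (the hypothesis $2m\geq k$ fails for $m=0$, $k=2$); instead I would compute it by hand, using the standard identity $\frac{1}{2\pi}\int_{-\infty}^\infty e^{i\lambda}(x|s|^2+i\lambda)^{-2}d\lambda = e^{-|s|^2 x}$ followed by the Gaussian $\int_{\mathbb R^d}e^{-|s|^2 x}\,ds = \pi^{d/2}x^{-d/2}$, to obtain $-\pi^{d/2}xF'_{2,d}(x)\cdot\Delta y$ (using $F'_{2,d}(\alpha)=\alpha^{-d/2}$).

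Combining the three integrated contributions, multiplying by $y^{-1}$ on the left, and applying Corollary \ref{cor:S and bT} to the left-hand side to identify the integral of $S^1_{s,z}(y^{-1}x\mathbf\Delta y)$ with $-\pi^{d/2}\mathbf T^{x,1}_{F_{2,d}}(y^{-1}x\mathbf\Delta y)$, the common factor $-\pi^{d/2}$ cancels and the claim follows. The one delicate point is the Leibniz bookkeeping in step~2: although $[y,x]=[y,R]=0$ would permit commuting $y$ to the left at various intermediate stages, doing so prematurely would absorb the right-most $y$ and destroy the conjugation $y^{-1}(\cdot)y$ visible in the final formula. Keeping every undifferentiated $y$ to the right of the resolvents and of $x$ throughout the expansion is precisely what produces that conjugation.
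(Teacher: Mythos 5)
Your proof is correct and follows essentially the same route as the paper: pull $y^{-1}$ out, expand $\Delta\big((x|s|^2+z)^{-1}y\big)$ by the Leibniz rule, recognise the three resulting terms as $S$-expressions, and integrate via Corollary \ref{cor:S and bT}. The only (harmless) deviation is the third term: since it equals $S^1_{s,z}(x)\cdot\Delta y$, Corollary \ref{cor:S and bT} does apply there with $m=1$, $k=2$ (giving $\mathbf T^{x,1}_{F_{2,d}}(x)=xF_{2,d}'(x)$ directly, which is how the paper handles it), and your by-hand contour/Gaussian computation reproduces exactly the same value, so your remark that the corollary fails for that term is a mis-assessment but costs nothing.
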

\begin{proof} By using \eqref{eq:pi and bimodule structure} in the definition of $S_{s,z}^1,$ we find
$$S^1_{s,z}(y^{-1}x\mathbf{\Delta}y)=-\frac{|s|^2y^{-1}x}{x|s|^2+z}\Delta\Big(\frac{y}{x|s|^2+z}\Big).$$
By the Leibniz rule, we have
$$\Delta\Big(\frac{y}{x|s|^2+z}\Big)=\frac1{x|s|^2+z}\Delta y+2\sum_{i=1}^dD_i\Big(\frac1{x|s|^2+z}\Big)\cdot D_iy+\Delta\Big(\frac1{x|s|^2+z}\Big)\cdot y.$$
Thus,
$$S^1_{s,z}(y^{-1}x\mathbf{\Delta}y)=y^{-1}S^1_{s,z}(x)\Delta y+2\sum_{i=1}^dy^{-1}S^1_{s,z}(x\mathbf{D}_i) D_iy+y^{-1} S^1_{s,z}(x\mathbf{\Delta}) y.$$
Like in the previous proof, the assertion follows by appealing to Corollary \ref{cor:S and bT}.	
\end{proof}

\begin{lem}\label{17 third lemma} Let $x\in C^\infty(\mathbb T^d_\theta)$ be positive and invertible. We have
\begin{align}\label{eq:some MOIs k=2}
d\mathbf{T}_{F_{2,d}}^{x,1}(x\mathbf{D}_i)+2\mathbf T^{x,2}_{F_{2,d}}(x\mathbf D_i,x)+2\mathbf T^{x,2}_{F_{2,d}}(x,x\mathbf D_i)=0.
\end{align}
\end{lem}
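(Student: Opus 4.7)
The plan is to reduce the identity, via Corollary \ref{cor:S and bT} with $k=2$, to a single application of the divergence theorem in $s\in\mathbb{R}^d$. Since $2m\geq 2=k$ for each term on the left-hand side, the corollary applies and reduces the claim to
\begin{equation*}
\int_{\mathbb{R}^d}\int_{-\infty}^{\infty}|s|^{-2}\bigl[d\,S^1_{s,i\lambda}(x\mathbf{D}_i)+2\,S^2_{s,i\lambda}(x\mathbf{D}_i,x)+2\,S^2_{s,i\lambda}(x,x\mathbf{D}_i)\bigr]e^{i\lambda}\,d\lambda\,ds=0.
\end{equation*}

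First I would compute each $S^m_{s,z}$ explicitly. Writing $R:=(x|s|^2+z)^{-1}$ and repeatedly using the identity $D_iR=-|s|^2R(D_ix)R$ together with the commutativity $xR=Rx$, the definition \eqref{eq:def S^m_sz} yields
\begin{align*}
S^1_{s,z}(x\mathbf{D}_i)&=|s|^4\,xR^2(D_ix)R,\\
S^2_{s,z}(x\mathbf{D}_i,x)&=-|s|^6\,xR^2(D_ix)xR^2+|s|^4\,xR^2(D_ix)R-|s|^6\,x^2R^3(D_ix)R,\\
S^2_{s,z}(x,x\mathbf{D}_i)&=-|s|^6\,x^2R^3(D_ix)R.
\end{align*}
Multiplying by $|s|^{-2}$ and assembling with the coefficients $d,2,2$, the combined integrand becomes
\begin{equation*}
(d+2)|s|^2\,xR^2(D_ix)R-2|s|^4\,xR^2(D_ix)xR^2-4|s|^4\,x^2R^3(D_ix)R.
\end{equation*}

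Second, I would recognise this expression as a total divergence in $s$. Setting $u=|s|^2$ and $H(u):=xR^2(D_ix)R$, the chain rule together with $\partial_uR=-xR^2$ gives $\partial_uH=-2x^2R^3(D_ix)R-xR^2(D_ix)xR^2$; since $|s|^2H$ depends on $s$ only through $u$, one has
\begin{equation*}
\sum_{j=1}^{d}\partial_{s_j}\bigl(s_j\,|s|^2H\bigr)=(d+2)|s|^2H+2|s|^4\partial_uH,
\end{equation*}
which matches the integrand above term-for-term. The divergence theorem therefore makes the $s$-integral vanish for each fixed $\lambda$ (boundary terms decay like $|s|^{-3}$ at infinity and are bounded near $0$), whence the full double integral is zero.

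The main delicacy lies in the first step: because $D_ix$ commutes with neither $x$ nor $R$, the position of $D_ix$ inside each resolvent product must be carefully tracked through the Leibniz expansion. Once this bookkeeping is done correctly, the three $S^m$-expressions conspire to match precisely the elementary divergence identity $\sum_j\partial_{s_j}(s_jG)=dG+2u\partial_uG$ applied to $G=|s|^2H$, and this conspiracy is what forces the clean rational coefficients $d,2,2$ appearing in the lemma.
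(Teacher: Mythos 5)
Your algebraic preparation is correct and is a genuinely different route from the paper's. The explicit formulas for $S^1_{s,z}(x\mathbf D_i)$, $S^2_{s,z}(x\mathbf D_i,x)$ and $S^2_{s,z}(x,x\mathbf D_i)$ check out, and with $H(u)=xR^2(D_ix)R$, $R=(xu+z)^{-1}$, the combined integrand is indeed $\sum_j\partial_{s_j}\bigl(s_j|s|^2H(|s|^2)\bigr)$. The paper instead expands the three terms via Definition \ref{def:bT} into integrals $T^x_{F_{2,d}^{[n]}}$, rewrites the sum as a single $T^x_{\phi}(D_ix)$ with a homogeneous two-variable symbol $\phi$, and verifies $\phi(1,\alpha)=0$ by elementary divided-difference manipulations using $F_{2,d}'(\alpha)=\alpha^{-d/2}$; your observation that the identity is ``really'' a total divergence in the momentum variable $s$ is a nice structural insight that the paper's symbol computation hides.

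However, the analytic step has a genuine gap. You apply the divergence theorem at fixed $\lambda$ and claim the boundary terms vanish because the field decays like $|s|^{-3}$; but the flux through $\partial B_N$ carries the surface factor $N^{d-1}$, and since $H(u)=O(u^{-3})$ the boundary term is of size $N^{d-1}\cdot N^{3}\cdot N^{-6}=N^{d-4}$. For $d=4$ this tends to a nonzero limit (essentially ${\rm vol}(\mathbb S^{3})\,x^{-1}(D_ix)x^{-1}$ up to constants) and for $d>4$ it diverges; consistently, the fixed-$\lambda$ integrand is only $O(|s|^{-4})$ at infinity, so for $d\geq4$ the $s$-integral at fixed $\lambda$ is not absolutely convergent and certainly not zero. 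In addition, Corollary \ref{cor:S and bT} places the $\lambda$-integral inside and the $s$-integral outside, so concluding ``the $s$-integral vanishes for each fixed $\lambda$, whence the double integral is zero'' tacitly interchanges the two integrals, which Fubini cannot justify in exactly the regime $d\geq4$ where the argument already fails. As written, your proof only covers $d=2,3$. The natural repair within your approach is to integrate in $\lambda$ first (as in the proof of Theorem \ref{ik base of recursion}, this converts the resolvent products into divided differences of $\alpha\mapsto e^{-|s|^2\alpha}$, giving rapid decay in $s$), justify commuting $\partial_{s_j}$ with the absolutely convergent $\lambda$-integral, and only then apply the divergence theorem to the $\lambda$-integrated, rapidly decaying field; alternatively, follow the paper's reduction to the scalar identity $\phi(1,\alpha)=0$, which avoids the asymptotic analysis altogether.
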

\begin{proof} By the recursive definition of $\mathbf T^{x,m}_\phi$ (Definition \ref{def:bT}), we have
\begin{align}
d\mathbf{T}_{F_{2,d}}^{x,1}(x\mathbf{D}_i)&=dT_{F_{2,d}^{[2]}}^x(x,D_ix),\quad 2\mathbf T^{x,2}_{F_{2,d}}(x,x\mathbf D_i)=2T^x_{F_{2,d}^{[3]}}(x,x,D_ix),\label{eq:some MOIs k=2 1}\\
2\mathbf T^{x,2}_{F_{2,d}}(x\mathbf D_i,x)&=2\mathbf{T}^{x,3}_{F_{2,d}}(x,D_ix,x)+2\mathbf{T}^{x,2}_{F_{2,d}}(x,D_ix)+2\mathbf{T}^{x,2}_{F_{2,d}}(x,x\mathbf{D}_i)\nonumber\\
&=2T^x_{F_{2,d}^{[3]}}(x,D_ix,x)+2T^x_{F_{2,d}^{[2]}}(x,D_ix)+2T^x_{F_{2,d}^{[3]}}(x,x,D_ix)\label{eq:some MOIs k=2 2}.
\end{align}
By the definition of the multiple operator integral, we may rewrite the above terms as $T_\phi(D_ix)$, for instance, for any function $f$ we may rewrite
$$T_{f}^x(x,D_ix,x)=T^x_{\phi_1}(D_ix),\qquad\text{where}\qquad \phi_1(\alpha_0,\alpha_1)=\alpha_0f(\alpha_0,\alpha_0,\alpha_1,\alpha_1)\alpha_1.$$
In the same way, by using \eqref{eq:some MOIs k=2 1} and \eqref{eq:some MOIs k=2 2}, the left-hand side of \eqref{eq:some MOIs k=2} equals $T_{\phi}^x(D_ix),$ where
\begin{align*}
\phi(\alpha_0,\alpha_1)=&d\alpha_0F_{2,d}^{[2]}(\alpha_0,\alpha_0,\alpha_1)+2\alpha_0\alpha_1F_{2,d}^{[3]}(\alpha_0,\alpha_0,\alpha_1,\alpha_1)\\
&+2\alpha_0F_{2,d}^{[2]}(\alpha_0,\alpha_0,\alpha_1)+4\alpha_0^2F_{2,d}^{[3]}(\alpha_0,\alpha_0,\alpha_0,\alpha_1).
\end{align*}

Note that $\phi$ is homogeneous. Thus, it suffices to prove that $\phi(1,\alpha)=0.$ In other words, we need to show
$$(d+2)F_{2,d}^{[2]}(1,1,\alpha)+2\alpha F_{2,d}^{[3]}(1,1,\alpha,\alpha)+4F_{2,d}^{[3]}(1,1,1,\alpha)=0.$$
This equality is an elementary exercise, albeit rather long, and its proof is omitted.
\calculations{
{\color{red} for OUR convenience, the proof is TEMPORARILY put here.}

For $\alpha=1,$ we have
$$\phi(1,1)=(d+2)\cdot\frac12 F_{2,d}''(1)+F_{2,d}'''(1)=(\frac{d}{2}+1)\cdot (-\frac{d}{2})+(-\frac{d}{2})\cdot(-\frac{d}{2}-1)=0.$$

Multiplying by $\alpha-1,$ we rewrite this equality as
$$(d+2)F_{2,d}^{[1]}(1,\alpha)-(d+2)F_{2,d}^{[1]}(1,1)+2\alpha F_{2,d}^{[2]}(1,\alpha,\alpha)-2\alpha F_{2,d}^{[2]}(1,1,\alpha)+$$
$$+4F_{2,d}^{[2]}(1,1,\alpha)-4F_{2,d}^{[2]}(1,1,1)=0.$$
Clearly,
$$dF_{2,d}^{[1]}(1,1)+4F_{2,d}^{[2]}(1,1,1)=dF_{2,d}'(1)+2F_{2,d}''(1)=0.$$
So, we rewrite the equality as
$$(d+2)F_{2,d}^{[1]}(1,\alpha)+2\alpha F_{2,d}^{[2]}(1,\alpha,\alpha)+(4-2\alpha )F_{2,d}^{[2]}(1,1,\alpha)=2.$$
Clearly,
$$(2-2\alpha)F_{2,d}^{[2]}(1,1,\alpha)=-2(\alpha-1)F_{2,d}^{[2]}(1,1,\alpha)=$$
$$=-2F_{2,d}^{[1]}(1,\alpha)+2F_{2,d}^{[1]}(1,1)=-2F_{2,d}^{[1]}(1,\alpha)+2.$$
So, we rewrite the equality as
$$dF_{2,d}^{[1]}(1,\alpha)+2\alpha F_{2,d}^{[2]}(1,\alpha,\alpha)+2F_{2,d}^{[2]}(1,1,\alpha)=0.$$

Multiplying by $\alpha-1,$ we rewrite the equality as
$$dF_{2,d}(\alpha)-dF_{2,d}(1)+2\alpha F_{2,d}^{[1]}(\alpha,\alpha)-2\alpha F_{2,d}^{[1]}(1,\alpha)+2F_{2,d}^{[1]}(1,\alpha)-2F_{2,d}^{[1]}(1,1)=0.$$
Clearly,
$$-2\alpha F_{2,d}^{[1]}(1,\alpha)+2F_{2,d}^{[1]}(1,\alpha)=2F_{2,d}(1)-2F_{2,d}(\alpha).$$
So, we rewrite the equality as
$$(d-2)F_{2,d}(\alpha)-(d-2)F_{2,d}(1)+2\alpha F_{2,d}'(\alpha)-2F_{2,d}'(1)=0.$$
In other words,
$$(d-2)F_{2,d}(\alpha)+2\alpha F_{2,d}'(\alpha)=(d-2)F_{2,d}(1)+2F_{2,d}'(1).$$

Recall that $F_{2,d}=\alpha^{1-\frac{d}{2}}$ for $d\geq 3$ (up to a constant factor) and $F_{2,2}=\log.$ Thus,
$$(d-2)F_{2,d}(\alpha)+2\alpha F_{2,d}'(\alpha)=(d-2)\alpha^{1-\frac{d}{2}}+2\alpha\cdot(1-\frac{d}{2})\alpha^{-\frac{d}{2}}=0.$$
Hence, the equality holds true.
}
\end{proof}

\begin{prop}\label{lem:conj property} Let $x,y\in C^\infty(\mathbb T^d_\theta)$ be invertible with $x\geq0$ and $[x,y]=0$. We have
$$I_2(\lambda_l(y^{-1}x)\Delta\lambda_l(y))=y^{-1}\cdot I_2(\lambda_l(x)\Delta)\cdot y.$$
\end{prop}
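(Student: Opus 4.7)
The plan is to directly compute $I_2(P)$ for $P = \lambda_l(y^{-1}x)\Delta\lambda_l(y)$ using Lemma \ref{short expression for I2}, and then apply the conjugation-type identities in Lemmas \ref{17 first lemma}, \ref{17 second lemma}, \ref{17 third lemma} to show the result equals $y^{-1}I_2(\lambda_l(x)\Delta)y$.

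First, I will bring $P$ into the form \eqref{eq:P}. Expanding $\Delta\lambda_l(y)$ via the Leibniz rule gives
\[
\Delta\lambda_l(y) = \lambda_l(\Delta y) + 2\sum_{i=1}^d\lambda_l(D_iy)D_i + \lambda_l(y)\Delta,
\]
and then $\lambda_l(y^{-1}x)\lambda_l(y) = \lambda_l(x)$ (using $[x,y]=0$). Hence $P$ is of the form \eqref{eq:P} with the same $x$, with $a_i = 2y^{-1}xD_iy$, and with $a = y^{-1}x\Delta y$. Using the right action \eqref{eq:right action} one checks the formal identities $y^{-1}x\mathbf{D}_iy = x\mathbf{D}_i + y^{-1}xD_iy$ and (summing the analogous calculation for $\mathbf{D}_i^2$) $y^{-1}x\mathbf{\Delta}y = x\mathbf{\Delta} + 2\sum_i y^{-1}xD_iy\,\mathbf{D}_i + y^{-1}x\Delta y$. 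Thus, in the notation \eqref{eq:bf Ai and bf P} for the operator $P$, we have $\mathbf{A}_i = 2y^{-1}x\mathbf{D}_iy$ and $\mathbf{P} = y^{-1}x\mathbf{\Delta}y$. Lemma \ref{short expression for I2} then gives
\[
-\pi^{-\frac{d}{2}}I_2(P) = \mathbf{T}^{x,1}_{F_{2,d}}(y^{-1}x\mathbf{\Delta}y) + \frac{4}{d}\sum_{i=1}^d \mathbf{T}^{x,2}_{F_{2,d}}(y^{-1}x\mathbf{D}_iy,\,y^{-1}x\mathbf{D}_iy).
\]

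Next, I apply Lemma \ref{17 first lemma} and Lemma \ref{17 second lemma} to extract $y^{-1}(\,\cdot\,)y$ from each term. The expression splits into three types of contributions: (a) the piece $y^{-1}\big[\mathbf{T}^{x,1}_{F_{2,d}}(x\mathbf{\Delta}) + \tfrac{4}{d}\sum_i\mathbf{T}^{x,2}_{F_{2,d}}(x\mathbf{D}_i,x\mathbf{D}_i)\big]y$, which by Lemma \ref{short expression for I2} (applied to the operator $\lambda_l(x)\Delta$, for which $a_i=0$ and $a=0$) equals exactly $y^{-1}(-\pi^{-d/2}I_2(\lambda_l(x)\Delta))y$; (b) terms multiplied on the right by $D_iy$; (c) terms multiplied on the right by $D_i^2y$ (or, after summing in $i$, $\Delta y$). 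The conjugation identity is proven once contributions (b) and (c) are shown to vanish.

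For (b), the prefactor of $y^{-1}(\cdot)D_iy$ after collecting everything is
\[
\tfrac{2}{d}\bigl[d\mathbf{T}^{x,1}_{F_{2,d}}(x\mathbf{D}_i) + 2\mathbf{T}^{x,2}_{F_{2,d}}(x\mathbf{D}_i,x) + 2\mathbf{T}^{x,2}_{F_{2,d}}(x,x\mathbf{D}_i)\bigr],
\]
which vanishes identically by Lemma \ref{17 third lemma}. For (c), after summing over $i$, the coefficient of $\Delta y$ is $xF_{2,d}'(x) + \tfrac{2}{d}x^2F_{2,d}''(x)$. Since $F_{2,d}'(\alpha)=\alpha^{-d/2}$ and $F_{2,d}''(\alpha)=-\tfrac{d}{2}\alpha^{-d/2-1}$, this reduces to $\alpha^{-d/2+1}-\alpha^{-d/2+1}\equiv 0$ when evaluated on the spectrum of $x$. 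Therefore both cancellations occur and the identity follows.

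The main obstacles are purely bookkeeping: carrying $\mathbf{A}_i$ and $\mathbf{P}$ correctly through the formal bimodule structure on $\mathcal{X}$, and making sure that the specific coefficients coming out of Lemmas \ref{17 first lemma}--\ref{17 second lemma} align precisely with those needed for Lemma \ref{17 third lemma} to apply (this is why the factor $4/d$ from $(2x\mathbf{D}_i)^{\otimes 2}$ is crucial). The final ``miracle'' is the ODE $\alpha F'_{2,d}(\alpha)+\tfrac{2}{d}\alpha^2 F''_{2,d}(\alpha)=0$, which is automatic from $F_{2,d}'(\alpha)=\alpha^{-d/2}$ but is the conceptual reason why conjugation by $y$ works at this order.
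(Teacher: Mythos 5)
Your proposal is correct and follows essentially the same route as the paper: reduce $P=\lambda_l(y^{-1}x)\Delta\lambda_l(y)$ to the form \eqref{eq:P} so that $\mathbf A_i=2y^{-1}x\mathbf D_iy$ and $\mathbf P=y^{-1}x\mathbf\Delta y$, apply Lemma \ref{short expression for I2}, then peel off $y^{-1}(\cdot)y$ via Lemmas \ref{17 first lemma} and \ref{17 second lemma}, killing the $D_iy$ terms with Lemma \ref{17 third lemma} and the $\Delta y$ terms with the identity $xF_{2,d}'(x)+\tfrac2d x^2F_{2,d}''(x)=0$. The coefficients you track ($4/d$, the $\tfrac2d$ prefactor in (b), and the vanishing ODE in (c)) all match the paper's computation.
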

\begin{proof} From \eqref{eq:right action} we obtain
$$y^{-1}x\mathbf{\Delta} y=x\mathbf{\Delta}+\sum_{i=1}^da_i\mathbf{D}_i+a,$$	
$$a_i=2y^{-1}xD_i(y),\quad 1\leq i\leq d,\quad a=y^{-1}x\Delta y.$$
From this one can derive that 
$$\mathbf{A}_i=2y^{-1}x\mathbf{D}_iy.$$
By Lemma \ref{short expression for I2} we have
$$-\pi^{-\frac{d}{2}}I_2(\lambda_l(y^{-1}x)\Delta\lambda_l(y))=\mathbf T^{x,1}_{F_{2,d}}(y^{-1}x\mathbf{\Delta} y)+\frac4d\sum_{i=1}^d\mathbf T^{x,2}_{F_{2,d}}(y^{-1}x\mathbf{D}_iy,y^{-1}x\mathbf{D}_iy).$$
Using Lemma \ref{17 first lemma} and Lemma \ref{17 second lemma}, we write
\begin{align*}
-&\pi^{-\frac{d}{2}}I_2(\lambda_l(y^{-1}x)\Delta\lambda_l(y))\\
=&y^{-1}\Big(\mathbf T^{x,1}_{F_{2,d}}(x\mathbf{\Delta})+\frac4d\sum_{i=1}^d\mathbf T^{x,2}_{F_{2,d}}(x\mathbf{D}_i,x\mathbf{D}_i)\Big)+y^{-1}\Big(xF_{2,d}'(x)+\frac{2}{d}x^2F_{2,d}''(x)\Big)\Delta y+\\
&+\sum_{i=1}^dy^{-1}\Big(2\mathbf{T}_{F_{2,d}}^{x,1}(x\mathbf{D}_i)+\frac{4}{d}\mathbf T^{x,2}_{F_{2,d}}(x\mathbf D_i,x)+\frac4d\mathbf T^{x,2}_{F_{2,d}}(x,x\mathbf D_i)\Big) D_iy.
\end{align*}

Since $F_{2,d}$ is the primitive of $\alpha\to\alpha^{-\frac{d}{2}},$ it follows that
$$xF_{2,d}'(x)+\frac{2}{d}x^2F_{2,d}''(x)=x\cdot x^{-\frac{d}{2}}+\frac{2x^2}{d}\cdot(-\frac{d}{2}x^{-1-\frac{d}{2}})=0.$$
So, the second summand on the right hand side vanishes. Third summand on the right hand side vanishes by Lemma \ref{17 third lemma}. This completes the proof.
\end{proof}

\subsection{Proof of Theorem \ref{cor:main thm k=d=2}}

\begin{lem}\label{I2 conformal first exact} Let $x\in C^\infty(\mathbb T^d_\theta)$ be positive and invertible. Let $d\geq 2.$ We have
$$-\pi^{-\frac{d}{2}}I_2(\lambda_l(x^{\frac12})\Delta\lambda_l(x^{\frac12}))=T^x_{\Phi}(\Delta x)+\sum_{i=1}^d T^x_{\Psi}(D_ix,D_ix),$$
for the symbols $\Phi,\Psi$ defined by $F_{2,d}$, a first order primitive of $\alpha\mapsto\alpha^{-\frac{d}{2}}$, as
\begin{align*}
\Phi(\alpha_0,\alpha_1):=&\Big(\frac{\alpha_1}{\alpha_0}\Big)^{\frac12}\Big(\alpha_0F_{2,d}^{[2]}(\alpha_0,\alpha_0,\alpha_1)+\frac4d\alpha_0^2 F_{2,d}^{[3]}(\alpha_0,\alpha_0,\alpha_0,\alpha_1)\Big),\\
\Psi(\alpha_0,\alpha_1,\alpha_2):=&(\frac{\alpha_2}{\alpha_0})^{\frac12}\Big(\frac4d\alpha_0\alpha_1 F_{2,d}^{[4]}(\alpha_0,\alpha_0,\alpha_1,\alpha_1,\alpha_2)\\
&+(2+\frac4d)\alpha_0 F_{2,d}^{[3]}(\alpha_0,\alpha_0,\alpha_1,\alpha_2)+\frac8d\alpha_0^2 F_{2,d}^{[4]}(\alpha_0,\alpha_0,\alpha_0,\alpha_1,\alpha_2)\Big),
\end{align*}
for $\alpha_0,\alpha_1,\alpha_2,\alpha>0$.
\end{lem}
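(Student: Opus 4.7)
The plan is to reduce everything to $I_2(\lambda_l(x)\Delta)$ and then unpack via Corollary~\ref{cor:main thm k=2}. First, set $y:=x^{\frac12}$. Then $y$ is positive, invertible and commutes with $x$, and $y^{-1}x = x^{\frac12} = y$, so $\lambda_l(y^{-1}x)\Delta\lambda_l(y)=\lambda_l(x^{\frac12})\Delta\lambda_l(x^{\frac12})$. Proposition~\ref{lem:conj property} therefore gives
\begin{equation*}
I_2\big(\lambda_l(x^{\frac12})\Delta\lambda_l(x^{\frac12})\big) = x^{-\frac12}\cdot I_2(\lambda_l(x)\Delta)\cdot x^{\frac12}.
\end{equation*}
Next, Corollary~\ref{cor:main thm k=2} applied with $a_i=0$ and $a=0$ expresses $-\pi^{-d/2}I_2(\lambda_l(x)\Delta)$ as an explicit sum of multiple operator integrals in $x$, $D_ix$ and $\Delta x$: exactly the four ``$D_ix,D_ix$''-type terms ($2T^x_{F_{2,d}^{[3]}}(x,D_ix,D_ix)$, $\frac{4}{d}T^x_{F_{2,d}^{[4]}}(x,D_ix,x,D_ix)$, $\frac{4}{d}T^x_{F_{2,d}^{[3]}}(x,D_ix,D_ix)$, $\frac{8}{d}T^x_{F_{2,d}^{[4]}}(x,x,D_ix,D_ix)$) and the two ``$\Delta x$''-type terms ($T^x_{F_{2,d}^{[2]}}(x,\Delta x)$ and $\frac{4}{d}T^x_{F_{2,d}^{[3]}}(x,x,\Delta x)$).

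The key technical observation I would use is the \emph{absorption rule}: if $V_j=g(x)$ commutes with $x$, then directly from Definition~\ref{def:MOI} one has
\begin{equation*}
T^x_\phi(V_1,\ldots,V_{j-1},g(x),V_{j+1},\ldots,V_n)=T^x_{\tilde\phi}(V_1,\ldots,V_{j-1},V_{j+1},\ldots,V_n),
\end{equation*}
where $\tilde\phi(\alpha_0,\ldots,\alpha_{n-1})=g(\alpha_{j-1})\phi(\alpha_0,\ldots,\alpha_{j-1},\alpha_{j-1},\alpha_j,\ldots,\alpha_{n-1})$. Applied with $g(\alpha)=\alpha$, this converts every $x$-argument into a repeated entry of $F_{2,d}^{[n]}$ and an extra factor of the corresponding spectral variable. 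Likewise, left/right multiplication by $x^{\pm\frac12}$ can be absorbed into the symbol, producing an overall prefactor $(\alpha_n/\alpha_0)^{1/2}$, where $n$ is the number of remaining arguments.

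I would then carry out the absorption systematically on each of the six MOIs above, always merging $x$'s to the left. For instance, $T^x_{F_{2,d}^{[4]}}(x,D_ix,x,D_ix)$ becomes $T^x_{\tilde\phi}(D_ix,D_ix)$ with $\tilde\phi(\alpha_0,\alpha_1,\alpha_2)=\alpha_0\alpha_1 F_{2,d}^{[4]}(\alpha_0,\alpha_0,\alpha_1,\alpha_1,\alpha_2)$, while $T^x_{F_{2,d}^{[4]}}(x,x,D_ix,D_ix)$ becomes $T^x_{\alpha_0^2F_{2,d}^{[4]}(\alpha_0,\alpha_0,\alpha_0,\alpha_1,\alpha_2)}(D_ix,D_ix)$, and similarly for the remaining terms. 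Summing these contributions (combining the two $T^x_{F_{2,d}^{[3]}}(x,D_ix,D_ix)$ pieces with coefficient $2+\frac{4}{d}$) and then multiplying by $(\alpha_2/\alpha_0)^{1/2}$ yields exactly the stated $\Psi$. The same procedure applied to the $\Delta x$-terms produces $\Phi$, collecting $\alpha_0 F_{2,d}^{[2]}(\alpha_0,\alpha_0,\alpha_1)$ (coefficient $1$) and $\alpha_0^2F_{2,d}^{[3]}(\alpha_0,\alpha_0,\alpha_0,\alpha_1)$ (coefficient $\frac{4}{d}$), then multiplied by $(\alpha_1/\alpha_0)^{1/2}$.

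The main obstacle is purely bookkeeping: one must be careful that the absorption rule is applied with the correct variable-index conventions so that the repeated entries land in the symmetric slots predicted by $\Phi$ and $\Psi$. Since divided differences are symmetric in all arguments, and $x$ commutes freely with functions of itself, there is no analytic difficulty beyond checking that the coefficients $2$, $\frac{4}{d}$, $\frac{4}{d}$, $\frac{8}{d}$, $1$, $\frac{4}{d}$ from Corollary~\ref{cor:main thm k=2} match the coefficients $2+\frac{4}{d}$, $\frac{4}{d}$, $\frac{8}{d}$, $1$, $\frac{4}{d}$ appearing in $\Phi,\Psi$, which is an elementary verification.
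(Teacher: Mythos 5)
Your proposal is correct and follows exactly the paper's own route: apply Corollary \ref{cor:main thm k=2} with $a_i=a=0$ to get the six multiple operator integrals for $-\pi^{-d/2}I_2(\lambda_l(x)\Delta)$, conjugate via Proposition \ref{lem:conj property} with $y=x^{1/2}$, and then absorb the $x$-arguments and the factors $x^{\mp 1/2}$ into the symbols. The paper states this last step only as "computing the resulting symbols"; your absorption rule (which the paper itself uses, e.g.\ in the proof of Lemma \ref{17 third lemma}) carries it out correctly and reproduces $\Phi$ and $\Psi$ with the right coefficients.
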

\begin{proof}
Using Corollary \ref{cor:main thm k=2} we find
\begin{align*}
-\pi^{-\frac{d}{2}}I_2(\lambda_l(x)\Delta)=&T^x_{F_{2,d}^{[2]}}(x,\Delta x)+\frac{4}{d}T^x_{F_{2,d}^{[3]}}(x,x,\Delta x)+\sum_{i=1}^2\bigg(
\frac4dT^x_{F_{2,d}^{[4]}}(x,D_ix,x,D_ix)\\
&+(2+\frac4d)T^x_{F_{2,d}^{[3]}}(x,D_ix,D_ix)+\frac{8}{d}T^x_{F_{2,d}^{[4]}}(x,x,D_ix,D_ix)\bigg).
\end{align*}
Combining the above formula with Proposition \ref{lem:conj property} and computing the resulting symbols yields the lemma.
\end{proof}

The rest of this section consists purely of algebraically rewriting the above formulas for $\Phi$ and $\Psi$ into a more concise form, and thusly derives Theorem \ref{cor:main thm k=d=2}.
Let us again fix $d\geq 2$ and $F_{2,d}$ as in Remark \ref{rem:Fkd}.

\begin{lem}\label{concrete Phi lemma} For $\Phi$ as in Lemma \ref{I2 conformal first exact}, we have
$$\Phi(\alpha_0,\alpha_1)=\frac{2(\alpha_0\alpha_1)^{\frac12}}{d}\cdot\frac{\alpha_0F_{2,d}^{[2]}(\alpha_0,\alpha_0,\alpha_1)-\alpha_1F_{2,d}^{[2]}(\alpha_0,\alpha_1,\alpha_1)}{\alpha_1-\alpha_0},\quad \alpha_0,\alpha_1>0.$$
\end{lem}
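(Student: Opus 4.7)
The plan is a purely algebraic manipulation of divided differences, reducing the identity to an Euler-type homogeneity relation for $F_{2,d}$. Both expressions for $\Phi$ share the prefactor $(\alpha_1/\alpha_0)^{1/2}$, so after cancelling it the claim reduces to
\[
\alpha_0 F_{2,d}^{[2]}(\alpha_0,\alpha_0,\alpha_1)+\frac{4}{d}\alpha_0^2 F_{2,d}^{[3]}(\alpha_0,\alpha_0,\alpha_0,\alpha_1)=\frac{2\alpha_0}{d}\cdot\frac{\alpha_0 F_{2,d}^{[2]}(\alpha_0,\alpha_0,\alpha_1)-\alpha_1 F_{2,d}^{[2]}(\alpha_0,\alpha_1,\alpha_1)}{\alpha_1-\alpha_0}.
\]

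The first step is to eliminate the third divided difference using the recursion \eqref{eq:second id dd}, which gives $(\alpha_0-\alpha_1)F_{2,d}^{[3]}(\alpha_0,\alpha_0,\alpha_0,\alpha_1)=\tfrac12 F_{2,d}''(\alpha_0)-F_{2,d}^{[2]}(\alpha_0,\alpha_0,\alpha_1)$. Multiplying the desired identity by $(\alpha_1-\alpha_0)$, substituting this relation, dividing by $\alpha_0$, and using $\alpha_0 F_{2,d}''(\alpha_0)=-\tfrac{d}{2}F_{2,d}'(\alpha_0)$ (an immediate consequence of $F_{2,d}'(\alpha)=\alpha^{-d/2}$), the problem collapses to proving
\[
dF_{2,d}^{[1]}(\alpha_0,\alpha_1)+2\alpha_0 F_{2,d}^{[2]}(\alpha_0,\alpha_0,\alpha_1)+2\alpha_1 F_{2,d}^{[2]}(\alpha_0,\alpha_1,\alpha_1)=0.
\]

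To establish this reduced identity, I would apply the recursion once more. Together with the symmetry of divided differences, it gives
\[
(\alpha_0-\alpha_1)F_{2,d}^{[2]}(\alpha_0,\alpha_0,\alpha_1)=F_{2,d}'(\alpha_0)-F_{2,d}^{[1]}(\alpha_0,\alpha_1),
\]
and the analogous formula with the roles of $\alpha_0$ and $\alpha_1$ swapped. Combining these, the reduced identity rearranges to the Euler-type relation $2(\alpha_0 F_{2,d}'(\alpha_0)-\alpha_1 F_{2,d}'(\alpha_1))=(2-d)(F_{2,d}(\alpha_0)-F_{2,d}(\alpha_1))$, which follows immediately from $\alpha F_{2,d}'(\alpha)=\alpha^{1-d/2}$ by substituting $F_{2,d}(\alpha)=\alpha^{1-d/2}/(1-d/2)$ for $d\neq 2$, or $F_{2,d}=\log$ for $d=2$ (in which case both sides vanish). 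The only obstacle is careful bookkeeping: tracking signs when swapping arguments of divided differences and confirming that the $(\alpha_1-\alpha_0)$ cancellations go through cleanly. No analytic input is required; the proof is entirely symbolic.
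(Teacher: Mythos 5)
Your proposal is correct and follows essentially the same route as the paper's proof: both arguments repeatedly multiply by $\alpha_1-\alpha_0$, use the divided-difference recursion \eqref{eq:second id dd} together with $F_{2,d}'(\alpha)+\frac{2\alpha}{d}F_{2,d}''(\alpha)=0$, and conclude with the explicit power form of $F_{2,d}'$. The only difference is directional bookkeeping --- the paper derives the right-hand side forward from the definition of $\Phi$, while you reduce the difference of the two sides to the (choice-of-primitive independent) Euler-type relation --- so nothing essential is changed.
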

\begin{proof} By definition of $\Phi$ in Lemma \ref{I2 conformal first exact}, we have ($\alpha_0,\alpha_1>0$ as always)
$$(\alpha_0\alpha_1)^{-\frac12}\Phi(\alpha_0,\alpha_1)=F_{2,d}^{[2]}(\alpha_0,\alpha_0,\alpha_1)+\frac4d\alpha_0 F_{2,d}^{[3]}(\alpha_0,\alpha_0,\alpha_0,\alpha_1).$$
Thus,
\begin{align*}
&(\alpha_1-\alpha_0)\cdot (\alpha_0\alpha_1)^{-\frac12}\Phi(\alpha_0,\alpha_1)\\
&=F_{2,d}^{[1]}(\alpha_0,\alpha_1)-F_{2,d}^{[1]}(\alpha_0,\alpha_0)+\frac{4\alpha_0}{d}F_{2,d}^{[2]}(\alpha_0,\alpha_0,\alpha_1)-\frac{4\alpha_0}{d}F_{2,d}^{[2]}(\alpha_0,\alpha_0,\alpha_0).
\end{align*}
By definition,
$$F_{2,d}^{[1]}(\alpha_0,\alpha_0)+\frac{4\alpha_0}{d}F_{2,d}^{[2]}(\alpha_0,\alpha_0,\alpha_0)=F_{2,d}'(\alpha_0)+\frac{2\alpha_0}{d}F_{2,d}''(\alpha_0)=0.$$
Thus,
$$(\alpha_1-\alpha_0)\cdot (\alpha_0\alpha_1)^{-\frac12}\Phi(\alpha_0,\alpha_1)=F_{2,d}^{[1]}(\alpha_0,\alpha_1)+\frac{4\alpha_0}{d}F_{2,d}^{[2]}(\alpha_0,\alpha_0,\alpha_1)$$
and
\begin{align*}
&(\alpha_1-\alpha_0)^2\cdot (\alpha_0\alpha_1)^{-\frac12}\Phi(\alpha_0,\alpha_1)\\
&=F_{2,d}(\alpha_1)-F_{2,d}(\alpha_0)+\frac{4\alpha_0}{d}F_{2,d}^{[1]}(\alpha_0,\alpha_1)-\frac{4\alpha_0}{d}F_{2,d}^{[1]}(\alpha_0,\alpha_0)\\
&=(1-\frac{2}{d})\big(F_{2,d}(\alpha_1)-F_{2,d}(\alpha_0)\big)+\frac{2(\alpha_0+\alpha_1)}{d}F_{2,d}^{[1]}(\alpha_0,\alpha_1)-\frac{4\alpha_0}{d}F_{2,d}'(\alpha_0).
\end{align*}
The right-hand side, clearly, equals
\begin{align*}
&\frac{2}{d}\Big((\alpha_0+\alpha_1)F_{2,d}^{[1]}(\alpha_0,\alpha_1)-(\alpha_0^{1-\frac{d}{2}}+\alpha_1^{1-\frac{d}{2}})\Big)\\
&=\frac{2}{d}\Big(\alpha_0(F_{2,d}^{[1]}(\alpha_0,\alpha_1)-F_{2,d}'(\alpha_0))+\alpha_1(F_{2,d}^{[1]}(\alpha_0,\alpha_1)-F_{2,d}'(\alpha_1))\Big)\\
&=\frac2d\Big(\alpha_0(\alpha_1-\alpha_0)F_{2,d}^{[2]}(\alpha_0,\alpha_0,\alpha_1)-\alpha_1(\alpha_1-\alpha_0)F_{2,d}^{[2]}(\alpha_0,\alpha_1,\alpha_1)\Big).
\end{align*}
Thus,
\begin{align*}
(\alpha_1-\alpha_0)\cdot (\alpha_0\alpha_1)^{-\frac12}\Phi(\alpha_0,\alpha_1)=\frac2d\Big(\alpha_0F_{2,d}^{[2]}(\alpha_0,\alpha_0,\alpha_1)-\alpha_1F_{2,d}^{[2]}(\alpha_0,\alpha_1,\alpha_1)\Big).
\end{align*}

\end{proof}

\begin{lem}\label{abstract psi lemma} Let $f,g\in C^\infty((0,\infty))$ be such that $f+g'=0.$ If
$$\psi(\alpha,\beta)=f^{[2]}(\alpha,\alpha,\beta)+2g^{[3]}(\alpha,\alpha,\alpha,\beta),$$
then
$$\psi(\alpha,\beta)=-g^{[3]}(\alpha,\alpha,\beta,\beta).$$
\end{lem}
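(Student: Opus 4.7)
The plan is to substitute the hypothesis $f=-g'$ into the definition of $\psi$ so that all three divided differences appearing in the claim can be expressed as integrals of the single function $h:=g'''$ via the simplex representation \eqref{eq:third id dd}. Explicitly,
$$f^{[2]}(\alpha,\alpha,\beta)=-\int_{S^2}h((1-\lambda_2)\alpha+\lambda_2\beta)\,d\lambda,$$
$$g^{[3]}(\alpha,\alpha,\alpha,\beta)=\int_{S^3}h((1-\lambda_3)\alpha+\lambda_3\beta)\,d\lambda,$$
$$g^{[3]}(\alpha,\alpha,\beta,\beta)=\int_{S^3}h((1-\lambda_2-\lambda_3)\alpha+(\lambda_2+\lambda_3)\beta)\,d\lambda,$$
where in the last line I have used $\lambda_0+\lambda_1=1-\lambda_2-\lambda_3$ on $S^3$.

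The next step is to reduce each of these simplex integrals to a one-variable integral on $[0,1]$ by integrating out the coordinates that do not appear in the argument of $h$. A routine calculation with the flat measure on the simplex (of total mass $1/n!$) gives the standard formulas
$$\int_{S^2}F(\lambda_2)\,d\lambda=\int_0^1 F(u)(1-u)\,du,\qquad \int_{S^3}F(\lambda_3)\,d\lambda=\int_0^1 F(u)\tfrac{(1-u)^2}{2}\,du,$$
while the change of variables $w=\lambda_2+\lambda_3$ (treating $\lambda_0,\lambda_1$ separately by the same slicing) yields
$$\int_{S^3}F(\lambda_2+\lambda_3)\,d\lambda=\int_0^1 F(w)\,w(1-w)\,dw.$$

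After inserting these reductions, both $\psi(\alpha,\beta)$ and $-g^{[3]}(\alpha,\alpha,\beta,\beta)$ become integrals over $u\in[0,1]$ of the common integrand $h((1-u)\alpha+u\beta)$ against explicit scalar weights, and the claim reduces to the pointwise algebraic identity
$$-(1-u)+2\cdot\tfrac{(1-u)^2}{2}=-u(1-u),\qquad u\in[0,1],$$
which is immediate after expanding. The main (mild) obstacle is purely clerical: one has to keep track of the correct factorial normalizations and of the factor $(1-w)$ coming from the leftover $\lambda_2$-integration in the last simplex reduction. An alternative algebraic route also works: apply the recursion \eqref{eq:second id dd} to express $(\alpha-\beta)$ times each side as combinations of $g^{[2]}$-values and $g^{[2]}(\alpha,\alpha,\alpha)=\tfrac12g''(\alpha)$, then use the elementary identity $g^{[2]}(\alpha,\alpha,\beta)+g^{[2]}(\alpha,\beta,\beta)=g'^{[1]}(\alpha,\beta)$ (easily verified from $g^{[2]}=\partial_\alpha g^{[1]}=-\partial_\beta g^{[1]}$) to close the argument.
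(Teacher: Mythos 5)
Your main argument is correct, but it takes a genuinely different route from the paper. You use the Hermite--Genocchi (simplex) representation $f^{[n]}(\alpha_0,\ldots,\alpha_n)=\int_{S^n}f^{(n)}(\sum_j\lambda_j\alpha_j)\,d\lambda$ (which the paper itself invokes in the proof of Lemma \ref{lem:MOI bound Banach}, so it is available), write all three divided differences as integrals of $h=g'''$ against the same affine argument $(1-u)\alpha+u\beta$, and integrate out the redundant simplex coordinates; your marginal densities $(1-u)$, $\tfrac{(1-u)^2}{2}$ and $w(1-w)$ are correct (they integrate to $1/2!$, $1/3!$, $1/3!$ respectively, as they must), and the identity then reduces to $-(1-u)+(1-u)^2=-u(1-u)$, which is right. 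The paper instead proceeds purely algebraically: starting from $f=-g'$ it repeatedly applies the recursion \eqref{eq:second id dd}, using $(g')^{[1]}(\alpha,\alpha)=g''(\alpha)=2g^{[2]}(\alpha,\alpha,\alpha)$, and telescopes to $\frac{1}{\beta-\alpha}\bigl(g^{[2]}(\alpha,\alpha,\beta)-g^{[2]}(\alpha,\beta,\beta)\bigr)=-g^{[3]}(\alpha,\alpha,\beta,\beta)$ — essentially the ``alternative algebraic route'' you sketch at the end. Your integral method is arguably more transparent (everything becomes a polynomial identity in the weight, and it generalises mechanically to similar identities with other coincidence patterns), while the paper's computation stays entirely within the divided-difference calculus and needs no smoothness beyond what the recursion itself requires. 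One small slip in your aside: the parenthetical ``$g^{[2]}=\partial_\alpha g^{[1]}=-\partial_\beta g^{[1]}$'' is not correct as stated, since $\partial_{\alpha}g^{[1]}(\alpha,\beta)=g^{[2]}(\alpha,\alpha,\beta)$ while $\partial_{\beta}g^{[1]}(\alpha,\beta)=g^{[2]}(\alpha,\beta,\beta)$ (no sign, and the two differ for $\alpha\neq\beta$); the identity you actually need, $g^{[2]}(\alpha,\alpha,\beta)+g^{[2]}(\alpha,\beta,\beta)=(g')^{[1]}(\alpha,\beta)$, is nevertheless true and follows directly from the recursion together with $g^{[1]}(\alpha,\alpha)=g'(\alpha)$. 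This does not affect your primary proof.
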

\begin{proof}
As $f=-g'$, we have
\begin{align*}
\psi(\alpha,\beta)&=-(g')^{[2]}(\alpha,\alpha,\beta)+2g^{[3]}(\alpha,\alpha,\alpha,\beta)\\
&=-\frac{(g')^{[1]}(\alpha,\beta)-(g')^{[1]}(\alpha,\alpha)}{\beta-\alpha}+2\frac{g^{[2]}(\alpha,\alpha,\beta)-\frac{1}{2}g^{(2)}(\alpha)}{\beta-\alpha}\\
&=\frac{1}{\beta-\alpha}\Big(2g^{[2]}(\alpha,\alpha,\beta)-(g')^{[1]}(\alpha,\beta)\Big)\\
&=\frac{1}{\beta-\alpha}\Big(2\frac{g^{[1]}(\alpha,\alpha)-g^{[1]}(\alpha,\beta)}{\alpha-\beta}-\frac{g^{[1]}(\alpha,\alpha)-g^{[1]}(\beta,\beta)}{\alpha-\beta}\Big)\\
&=\frac{1}{\beta-\alpha}\Big(\frac{g^{[1]}(\alpha,\alpha)-g^{[1]}(\alpha,\beta)}{\alpha-\beta}-\frac{g^{[1]}(\alpha,\beta)-g^{[1]}(\beta,\beta)}{\alpha-\beta}\Big)\\
&=\frac{1}{\beta-\alpha}(g^{[2]}(\alpha,\alpha,\beta)-g^{[2]}(\alpha,\beta,\beta)),
\end{align*}
concluding the proof.
\end{proof}

\begin{lem}\label{concrete Psi lemma} For $\Psi$ as in Lemma \ref{I2 conformal first exact}, we have
$$\Psi(\alpha,1,\beta)=-\frac{4}{d}(\alpha\beta)^{\frac12}g^{[3]}(\alpha,\alpha,\beta,\beta),\quad \alpha,\beta>0.$$
Here,
$$g(\alpha)=F_{2,d}(\alpha)+F_{2,d}^{[1]}(1,\alpha),\quad \alpha>0.$$
\end{lem}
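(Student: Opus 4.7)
The plan is to reduce the claim to a single algebraic identity among divided differences of $F_{2,d}$ and verify it using the defining property $F_{2,d}'(\alpha)=\alpha^{-d/2}$. Substituting $(\alpha_0,\alpha_1,\alpha_2)=(\alpha,1,\beta)$ into the formula for $\Psi$ from Lemma \ref{I2 conformal first exact}, rewriting the prefactor $(\beta/\alpha)^{1/2}\alpha$ as $(\alpha\beta)^{1/2}$, and expanding $g^{[3]}(\alpha,\alpha,\beta,\beta)$ via $(F_{2,d}^{[1]}(1,\cdot))^{[n]}(\alpha_0,\ldots,\alpha_n)=F_{2,d}^{[n+1]}(1,\alpha_0,\ldots,\alpha_n)$ together with the symmetry of divided differences, the claim reduces to proving the identity
\begin{align*}
F_{2,d}^{[4]}(\alpha,\alpha,1,1,\beta)&+(d/2+1)F_{2,d}^{[3]}(\alpha,\alpha,1,\beta)+2\alpha F_{2,d}^{[4]}(\alpha,\alpha,\alpha,1,\beta)\\
&+F_{2,d}^{[3]}(\alpha,\alpha,\beta,\beta)+F_{2,d}^{[4]}(\alpha,\alpha,1,\beta,\beta)=0.
\end{align*}

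To establish this identity I would combine two ingredients. The first is the defining relation $\alpha F_{2,d}'(\alpha)=(1-d/2)F_{2,d}(\alpha)+c$ for some constant $c$; taking its third divided difference at $(1,\alpha,\alpha,\beta)$, via the product rule $(\alpha g)^{[n]}(\alpha_0,\ldots,\alpha_n)=\alpha_0g^{[n]}(\alpha_0,\ldots,\alpha_n)+g^{[n-1]}(\alpha_1,\ldots,\alpha_n)$ applied to the left side, and the formula $(F_{2,d}')^{[n]}(\alpha_0,\ldots,\alpha_n)=\sum_{i=0}^{n}F_{2,d}^{[n+1]}(\alpha_0,\ldots,\alpha_i,\alpha_i,\ldots,\alpha_n)$ used to re-express derivatives as divided differences, produces the auxiliary identity
\begin{align*}
F_{2,d}^{[4]}(\alpha,\alpha,1,1,\beta)&+2F_{2,d}^{[4]}(\alpha,\alpha,\alpha,1,\beta)+F_{2,d}^{[4]}(\alpha,\alpha,1,\beta,\beta)\\
&+2F_{2,d}^{[3]}(\alpha,\alpha,\alpha,\beta)+F_{2,d}^{[3]}(\alpha,\alpha,\beta,\beta)+(d/2-1)F_{2,d}^{[3]}(\alpha,\alpha,1,\beta)=0.
\end{align*}
The second ingredient is the divided-difference recursion \eqref{eq:second id dd} applied to $F_{2,d}^{[4]}(\alpha,\alpha,\alpha,\beta,1)$ with $\alpha$ and $1$ as the outer arguments, yielding
$$(\alpha-1)F_{2,d}^{[4]}(\alpha,\alpha,\alpha,1,\beta)=F_{2,d}^{[3]}(\alpha,\alpha,\alpha,\beta)-F_{2,d}^{[3]}(\alpha,\alpha,1,\beta).$$
Substituting $2F_{2,d}^{[3]}(\alpha,\alpha,\alpha,\beta)=2F_{2,d}^{[3]}(\alpha,\alpha,1,\beta)+2(\alpha-1)F_{2,d}^{[4]}(\alpha,\alpha,\alpha,1,\beta)$ into the auxiliary identity and collecting like terms produces the reduced claim exactly.

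The main obstacle is the $2\alpha F_{2,d}^{[4]}(\alpha,\alpha,\alpha,1,\beta)$ term, whose $\alpha$-prefactor cannot be produced by any universal divided-difference identity valid for an arbitrary smooth $F$: the proof genuinely exploits the functional relation $\alpha F_{2,d}'+(d/2-1)F_{2,d}=c$ that characterises $F_{2,d}$ among all smooth functions. In this respect the lemma is not a direct consequence of the formalism of Lemma \ref{abstract psi lemma} alone, even though the combinatorial pattern is similar. The choice $g(\alpha)=F_{2,d}(\alpha)+F_{2,d}^{[1]}(1,\alpha)$ in the statement is dictated precisely by the need to absorb the residual $F_{2,d}^{[n-1]}(\alpha_1,\ldots,\alpha_n)$ terms generated by the product-rule expansion in the first ingredient, which is why the $F_{2,d}^{[1]}(1,\alpha)$ summand appears.
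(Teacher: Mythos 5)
Your proof is correct, but its decisive step differs from the paper's. Both arguments start the same way: substitute $(\alpha_0,\alpha_1,\alpha_2)=(\alpha,1,\beta)$ into the formula of Lemma \ref{I2 conformal first exact}, pull out $(\alpha\beta)^{\frac12}$, and use the recursion \eqref{eq:second id dd} in the form $(\alpha-1)F_{2,d}^{[4]}(\alpha,\alpha,\alpha,1,\beta)=F_{2,d}^{[3]}(\alpha,\alpha,\alpha,\beta)-F_{2,d}^{[3]}(\alpha,\alpha,1,\beta)$ to trade the $\alpha$-prefactor. From there the paper recognises the resulting expression as $f^{[2]}(\alpha,\alpha,\beta)+2g^{[3]}(\alpha,\alpha,\alpha,\beta)$ with $f(\gamma)=F_{2,d}^{[2]}(1,1,\gamma)+(\frac d2-1)F_{2,d}^{[1]}(1,\gamma)$, verifies the one-variable relation $f+g'=0$ from the explicit power/log form of $F_{2,d}$, and then invokes the abstract Lemma \ref{abstract psi lemma}; you bypass Lemma \ref{abstract psi lemma} entirely and instead obtain the required four-node identity by taking the third divided difference of the Euler-type relation $\alpha F_{2,d}'(\alpha)+(\frac d2-1)F_{2,d}(\alpha)=c$ at the nodes $(1,\alpha,\alpha,\beta)$, via the divided-difference Leibniz rule and the repeated-node formula for $(F_{2,d}')^{[n]}$. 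I checked your reduction, your auxiliary identity, and the final cancellation; they are exact. Your route has the merit of encoding the $F_{2,d}$-specific input once, through the functional relation, which treats $d=2$ (where $c\neq0$) and $d\geq3$ uniformly, whereas the paper's check of $f+g'=0$ handles $d=2$ separately; what the paper's route buys is modularity, since the multivariate manipulation is isolated in Lemma \ref{abstract psi lemma}, valid for any pair with $f+g'=0$, leaving only a one-variable computation specific to $F_{2,d}$. In a complete write-up you should state or prove the two standard identities you rely on, namely $(\gamma v)^{[n]}(\gamma_0,\ldots,\gamma_n)=\gamma_0v^{[n]}(\gamma_0,\ldots,\gamma_n)+v^{[n-1]}(\gamma_1,\ldots,\gamma_n)$ and $(v')^{[n]}(\gamma_0,\ldots,\gamma_n)=\sum_{i=0}^{n}v^{[n+1]}(\gamma_0,\ldots,\gamma_i,\gamma_i,\ldots,\gamma_n)$, since the preliminaries only record \eqref{eq:first id dd}--\eqref{eq:third id dd} and the symmetry of divided differences.
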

\begin{proof} By definition of $\Psi$ in Lemma \ref{I2 conformal first exact}, we have
\begin{align*}
&\frac{d}{4}\frac{1}{(\alpha\beta)^{\frac12}}\Psi(\alpha,1,\beta)\\
&=F_{2,d}^{[4]}(\alpha,\alpha,1,1,\beta)+\Big(\frac{d}{2}+1\Big)F_{2,d}^{[3]}(\alpha,\alpha,1,\beta)+2\alpha F_{2,d}^{[4]}(\alpha,\alpha,\alpha,1,\beta).
\end{align*}
Note that
\begin{align*}
\alpha F_{2,d}^{[4]}(\alpha,\alpha,\alpha,1,\beta)&=(\alpha-1)F_{2,d}^{[4]}(\alpha,\alpha,\alpha,1,\beta)+F_{2,d}^{[4]}(\alpha,\alpha,\alpha,1,\beta)\\
&=F_{2,d}^{[3]}(\alpha,\alpha,\alpha,\beta)-F_{2,d}^{[3]}(\alpha,\alpha,1,\beta)+F_{2,d}^{[4]}(\alpha,\alpha,\alpha,1,\beta).
\end{align*}
Thus,
\begin{align*}
\frac{d}{4}\frac{1}{(\alpha\beta)^{\frac12}}\Psi(\alpha,1,\beta)&=F_{2,d}^{[4]}(\alpha,\alpha,1,1,\beta)+\Big(\frac{d}{2}-1\Big)F_{2,d}^{[3]}(\alpha,\alpha,1,\beta)\\
&\quad+2F_{2,d}^{[3]}(\alpha,\alpha,\alpha,\beta)+2F_{2,d}^{[4]}(\alpha,\alpha,\alpha,1,\beta)\\
&=f^{[2]}(\alpha,\alpha,\beta)+2g^{[3]}(\alpha,\alpha,\alpha,\beta),
\end{align*}
where
$$f(\alpha)=F_{2,d}^{[2]}(1,1,\alpha)+\Big(\frac{d}{2}-1\Big)F_{2,d}^{[1]}(1,\alpha);\qquad g(\alpha)=F_{2,d}(\alpha)+F_{2,d}^{[1]}(1,\alpha).$$
By writing out the divided differences explicitly, it is straightforward to show that $f+g'=0.$ The assertion now follows from Lemma \ref{abstract psi lemma}.
\calculations{
{\color{red} For OUR convenience, I verify below that $f+g'=0.$ This will be removed from the final text.}

Recall that $F_{2,d}(\alpha)=\alpha^{1-\frac{d}{2}}$ (modulo constant factor) --- except for $d=2,$ which case is easily separately verified. We have
$$(f+g')(\alpha)=$$
$$=\frac{\frac{\alpha^{1-\frac{d}{2}}-1}{\alpha-1}-(1-\frac{d}{2})}{\alpha-1}+(\frac{d}{2}-1)\frac{\alpha^{1-\frac{d}{2}}-1}{\alpha-1}+(1-\frac{d}{2})\alpha^{-\frac{d}{2}}+\frac{(1-\frac{d}{2})\alpha^{-\frac{d}{2}}(\alpha-1)-\alpha^{1-\frac{d}{2}}+1}{(\alpha-1)^2}=$$
$$=\frac{\alpha^{1-\frac{d}{2}}-1}{(\alpha-1)^2}-\frac{1-\frac{d}{2}}{\alpha-1}+(\frac{d}{2}-1)\frac{\alpha^{1-\frac{d}{2}}}{\alpha-1}+\frac{1-\frac{d}{2}}{\alpha-1}+(1-\frac{d}{2})\alpha^{-\frac{d}{2}}+$$
$$+(1-\frac{d}{2})\frac{\alpha^{-\frac{d}{2}}}{\alpha-1}-\frac{\alpha^{1-\frac{d}{2}}-1}{(\alpha-1)^2}=$$
$$=(\frac{d}{2}-1)\frac{\alpha^{1-\frac{d}{2}}}{\alpha-1}+(1-\frac{d}{2})\alpha^{-\frac{d}{2}}+(1-\frac{d}{2})\frac{\alpha^{-\frac{d}{2}}}{\alpha-1}=$$
$$=(\frac{d}{2}-1)\frac{\alpha^{1-\frac{d}{2}}-\alpha^{-\frac{d}{2}}}{\alpha-1}+(1-\frac{d}{2})\alpha^{-\frac{d}{2}}=(\frac{d}{2}-1)\alpha^{-\frac{d}{2}}+(1-\frac{d}{2})\alpha^{-\frac{d}{2}}=0.$$
}
\end{proof}

\begin{proof}[Proof of Theorem \ref{cor:main thm k=d=2}] By Lemma \ref{I2 conformal first exact}, we obtain the assertion of Theorem \ref{cor:main thm k=d=2}, with alternate expressions for $\Phi$ and $\Psi$. It is established in Lemma \ref{concrete Phi lemma} that  the expressions for $\Phi$  in Lemma \ref{I2 conformal first exact} and in Theorem \ref{cor:main thm k=d=2} coincide.

Note that $\Psi$ (as in Lemma \ref{I2 conformal first exact}) is homogeneous of degree $-1-\frac{d}{2}$, so that
$$\Psi(\alpha_0,\alpha_1,\alpha_2)=\alpha_1^{-1-\frac{d}{2}}\Psi(\frac{\alpha_0}{\alpha_1},1,\frac{\alpha_2}{\alpha_1}),\quad \alpha_0,\alpha_1,\alpha_2>0.$$
The required convenient expression for $\Psi$  now follows from Lemma \ref{concrete Psi lemma}.
\end{proof}

\section{Recovering the Connes--Moscovici modular curvature}
\label{sct:Connes-Moscovici}
In \cite{Connes-Moscovici}, and, independently, in \cite{FaKh1}, a formula for the so-called scalar curvature $I_2(P)$ of the conformally deformed non-commutative two-torus is given in terms of the modular operator of the corresponding conformal factors. Below we show how to recover their formulas as a special case of our result by taking $P=\lambda_l(x^{1/2})\Delta\lambda_l(x^{1/2})$, which in the notation of \cite{Connes-Moscovici} and \cite{FaKh1} corresponds to the Laplacian on functions when $\tau=i$.

For our convenience, we define the modular functional calculus as follows.
\begin{defi}\label{demistifiying fact} Let $d\in\mathbb N_{\geq2}$, $n\in\mathbb N$, and $x,V_1,\ldots,V_n\in L_\infty(\mathbb T^d_\theta)$ with $x$ positive and invertible. For any $K\in C^\infty(\mathbb R)$, $H\in C^\infty(\mathbb R^2)$, and $L\in C^\infty(\mathbb R^n)$, we set
$$K(\nabla)(V_1):=T^x_{K(\log(\frac{\alpha_1}{\alpha_0}))}(V_1),\quad H(\nabla_1,\nabla_2)(V_1,V_2):=T^x_{H(\log(\frac{\alpha_1}{\alpha_0}),\log(\frac{\alpha_2}{\alpha_1}))}(V_1,V_2),$$
$$L(\nabla_1,\ldots,\nabla_n)(V_1,\ldots,V_n):=T^x_{L(\log(\alpha_1/\alpha_0),\ldots,\log(\alpha_n/\alpha_{n-1}))}(V_1,\ldots,V_n).$$
Here and in the following, a multiple operator integral $T^x_{f(\alpha_0,\ldots,\alpha_n)}$ should be understood as $T^x_{(\alpha_0,\ldots,\alpha_n)\mapsto f(\alpha_0,\ldots,\alpha_n)}$.
\end{defi}
Consider as an important example the case that $K(\log (\alpha))=\alpha^p$. Then 
$$T^x_{K(\log(\frac{\alpha_1}{\alpha_0}))}(V_1)=T^x_{\alpha_0^{-p}\alpha_1^p}(V_1)=x^{-p}V_1x^p=\tilde\Delta^p(V_1)=K(\log\tilde\Delta)(V_1),$$
where $\tilde\Delta:V_1\mapsto e^{-h}V_1e^h$ is the modular operator corresponding to the conformal factor $h=\log x$. Extending this argument a bit further, one finds that the above definition agrees with the definitions of \cite{Connes-Moscovici,FaKh1}; see \cite{Lesch} for more clarification.

\begin{lem}\label{log diff lemma} Let $d\in\mathbb N_{\geq2}$. If $x=e^h$ for self-adjoint $h\in C^\infty(\mathbb T^d_\theta)$ then for all $K\in C^\infty(\mathbb R)$ and $H\in C^\infty(\mathbb R^2)$ we have
\begin{align*}
T^x_{\Phi_K}(\Delta x)+\sum_{i=1}^d T^x_{\Psi_{K,H}}(D_ix,D_ix)=\frac12 K(\nabla)(\Delta h)+\frac14\sum_{i=1}^dH(\nabla_1,\nabla_2)(D_ih,D_ih),
\end{align*}
for
\begin{align*}
\Phi_{K}(\alpha_0,\alpha_1)=&\frac12K(\log(\frac{\alpha_1}{\alpha_0}))\cdot\log^{[1]}(\alpha_0,\alpha_1);\\
\Psi_{K,H}(\alpha_0,\alpha_1,\alpha_2)=&K(\log(\frac{\alpha_2}{\alpha_0}))\cdot \log^{[2]}(\alpha_0,\alpha_1,\alpha_2)\\
&+\frac14H(\log(\frac{\alpha_1}{\alpha_0}),\log(\frac{\alpha_2}{\alpha_1}))\cdot\log^{[1]}(\alpha_0,\alpha_1)\cdot\log^{[1]}(\alpha_1,\alpha_2).
\end{align*}
\end{lem}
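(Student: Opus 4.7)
The plan is to reduce everything to multiple operator integrals based at $h$ rather than at $x=e^h$, and then compare symbols. By elementary functional calculus, for any symbol $\phi$ of $n+1$ variables one has
$$T^x_{\phi}(V_1,\ldots,V_n)=T^h_{\tilde\phi}(V_1,\ldots,V_n),\qquad \tilde\phi(\beta_0,\ldots,\beta_n):=\phi(e^{\beta_0},\ldots,e^{\beta_n}),$$
so that in particular the right-hand side of the lemma, via Definition \ref{demistifiying fact}, already has the form $T^h_{K(\beta_1-\beta_0)}(\Delta h)/2+\tfrac14\sum_i T^h_{H(\beta_1-\beta_0,\beta_2-\beta_1)}(D_ih,D_ih)$.

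First I would express the derivatives of $x$ as MOIs in $h$. The standard MOI derivation rule $D_if(h)=T^h_{f^{[1]}}(D_ih)$ applied to $f=\exp$ gives $D_ix=T^h_{\exp^{[1]}}(D_ih)$. Differentiating once more via $D_iT^h_\phi(V)=T^h_\phi(D_iV)+T^h_{\delta_0\phi}(D_ih,V)+T^h_{\delta_1\phi}(V,D_ih)$, and using $\delta_0\exp^{[1]}=\delta_1\exp^{[1]}=\exp^{[2]}$, one obtains, after summing over $i$,
$$\Delta x=T^h_{\exp^{[1]}}(\Delta h)+2\sum_{i=1}^dT^h_{\exp^{[2]}}(D_ih,D_ih).$$

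Next I would substitute these into $T^x_{\Phi_K}(\Delta x)$ and $T^x_{\Psi_{K,H}}(D_ix,D_ix)$ and collapse the nested MOIs via the composition rule: inserting an inner depth-$k$ MOI $T^h_\phi(W_1,\ldots,W_k)$ into slot $j$ of an outer depth-$n$ MOI $T^h_\Theta$ produces a single MOI of depth $n+k-1$ whose symbol is the product of $\Theta$, evaluated at the surviving outer variables, and $\phi$, evaluated at slot $j$'s block of variables. After this collapse, every term on the left-hand side becomes an MOI in $h$ of either $\Delta h$ or $(D_ih,D_ih)$, directly comparable to the right-hand side.

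The remaining work is pure symbol algebra driven by two identities. The elementary $\log^{[1]}(e^{\beta_0},e^{\beta_1})\exp^{[1]}(\beta_0,\beta_1)=1$ immediately converts $T^x_{\Phi_K}(T^h_{\exp^{[1]}}(\Delta h))$ into $\tfrac12 K(\nabla)(\Delta h)$ and the $H$-piece of $T^x_{\Psi_{K,H}}(D_ix,D_ix)$ into $\tfrac14 H(\nabla_1,\nabla_2)(D_ih,D_ih)$, precisely recovering the right-hand side. It remains to show that the leftover $K$-contributions, namely $2T^x_{\Phi_K}(T^h_{\exp^{[2]}}(D_ih,D_ih))$ from the Laplacian expansion and the $K$-piece of $T^x_{\Psi_{K,H}}(D_ix,D_ix)$, cancel. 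This reduces to the single identity
$$\log^{[2]}(e^{\beta_0},e^{\beta_1},e^{\beta_2})\exp^{[1]}(\beta_0,\beta_1)\exp^{[1]}(\beta_1,\beta_2)+\frac{\exp^{[2]}(\beta_0,\beta_1,\beta_2)}{\exp^{[1]}(\beta_0,\beta_2)}=0,$$
which follows from the chain rule for the second-order divided difference applied to $\log\circ\exp=\mathrm{id}$ (the second divided difference of the identity being zero).

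The main obstacle I anticipate is the bookkeeping in the composition step: correctly combining depth-1 and depth-2 inner MOIs with outer MOIs of depth 1 and 2 without miscounting variables of the resulting symbols. Once the combined symbols are in hand the final cancellation is purely algebraic and follows from the two identities above.
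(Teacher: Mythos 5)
Your proposal is correct, but it runs the argument in the opposite direction from the paper, which makes it slightly longer. The paper's proof is two lines: it writes the $h$-quantities as operator integrals based at $x$, namely $D_ih=T^x_{\log^{[1]}}(D_ix)$ and $\Delta h=T^x_{\log^{[1]}}(\Delta x)+2\sum_{i}T^x_{\log^{[2]}}(D_ix,D_ix)$, substitutes these into the right-hand side (via Definition \ref{demistifiying fact}), and composes; the symbols $\Phi_K$ and $\Psi_{K,H}$ then appear verbatim, with no cancellation required, because they were defined to be exactly these composed symbols. You instead rebase every integral at $h$ via $T^x_\phi=T^h_{\phi(e^{\beta_0},\ldots)}$, expand $D_ix=T^h_{\exp^{[1]}}(D_ih)$ and $\Delta x=T^h_{\exp^{[1]}}(\Delta h)+2\sum_iT^h_{\exp^{[2]}}(D_ih,D_ih)$, and then must verify the cancellation
\begin{align*}
\frac{\exp^{[2]}(\beta_0,\beta_1,\beta_2)}{\exp^{[1]}(\beta_0,\beta_2)}+\log^{[2]}(e^{\beta_0},e^{\beta_1},e^{\beta_2})\exp^{[1]}(\beta_0,\beta_1)\exp^{[1]}(\beta_1,\beta_2)=0,
\end{align*}
which indeed holds — it is the order-two divided-difference chain rule applied to $\log\circ\exp=\mathrm{id}$, exactly as you say, and I have checked it directly. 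Both routes rest on the same two standard facts (the derivation formula $D_if(h)=T^h_{f^{[1]}}(D_ih)$ with its second-order Leibniz extension, and the composition rule for substituting an MOI into a slot of another MOI), so your proof is sound; what the paper's direction buys is that no divided-difference identity is needed, while your direction makes visible the structural reason the two sides match, namely the chain rule for divided differences of mutually inverse functions. If you write it up, state the composition rule you use precisely (symbol of the composite being the product of the outer symbol in the surviving variables with the inner symbol in its block), since that bookkeeping is the only place where an error could slip in.
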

\begin{proof}
As $C^{\infty}(\mathbb{T}^d_{\theta})$ is stable under holomorphic functional calculus, we have $x=e^h\in C^{\infty}(\mathbb{T}^d_{\theta})$. 
By standard arguments in multiple operator integration theory, we find
$$D_ih=T^x_{\log^{[1]}}(D_ix);\quad \Delta h=T^x_{\log^{[1]}}(\Delta x)+2\sum_{i=1}^dT^x_{\log^{[2]}}(D_ix,D_ix).$$
Applying Definition \ref{demistifiying fact}, we obtain the lemma.
\end{proof}

It turns out that $\Phi_{K_0}=\Phi$ and $\Psi_{K_0,H_0}=\Psi$ with $\Phi,\Psi$ as in Theorem \ref{cor:main thm k=d=2} and $K_0$ and $H_0$ precisely as in the following theorem, proven in an entirely different way in \cite{Connes-Moscovici}.

\begin{thm}[Connes--Moscovici]\label{thm:Connes-Moscovici}
Let $x=e^h$ for self-adjoint $h\in C^\infty(\mathbb T^2_\theta)$ and consider $P=\lambda_l(x^{1/2})\Delta\lambda_l(x^{1/2})$ acting in $L_2(\mathbb T^2_\theta)$. We have
\begin{align*}
I_2(P)=-\frac{\pi}{2}\left(K_0(\nabla)(\Delta h)+\frac{1}{2}\sum_{i=1}^2 H_0(\nabla_1,\nabla_2)(D_ih,D_ih)\right),
\end{align*}
where
\begin{align*}
&K_0(s):=\frac{-2+s\coth\left(\frac{s}{2}\right)}{s\sinh\left(\frac{s}{2}\right)};\qquad\qquad H_0(s,t):=\\
&\frac{t(s+t)\cosh(s)-s(s+t)\cosh(t)+(s-t)(s+t+\sinh(s)+\sinh(t)-\sinh(s+t))}{st(s+t)\sinh(s/2)\sinh(t/2)\sinh^2((s+t)/2)}.
\end{align*}
\end{thm}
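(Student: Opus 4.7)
\textbf{Proof plan for Theorem \ref{thm:Connes-Moscovici}.} The strategy is to specialize Theorem \ref{cor:main thm k=d=2} to $d=2$ and then convert from multiple operator integrals in $x$ to modular functional calculus in $h$ using Lemma \ref{log diff lemma}. With $d=2$ and $F_{2,2}=\log$, Theorem \ref{cor:main thm k=d=2} gives
$$-\pi^{-1}I_2(P)=T^x_{\Phi}(\Delta x)+\sum_{i=1}^2 T^x_{\Psi}(D_ix,D_ix),$$
with the specific $\Phi,\Psi$ stated there. The game is then to find unique $K_0\in C^\infty(\mathbb R)$ and $H_0\in C^\infty(\mathbb R^2)$ such that $\Phi_{K_0}=\Phi$ and $\Psi_{K_0,H_0}=\Psi$ pointwise on $(0,\infty)^2$ and $(0,\infty)^3$ respectively, for $\Phi_K,\Psi_{K,H}$ defined in Lemma \ref{log diff lemma}. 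Granting this identification, Lemma \ref{log diff lemma} immediately yields the stated formula after multiplying both sides by $-\pi$.

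\textbf{Identifying $K_0$.} The defining equation $\Phi_{K_0}=\Phi$ reads
$$\tfrac{1}{2}K_0\!\bigl(\log(\alpha_1/\alpha_0)\bigr)\cdot\log^{[1]}(\alpha_0,\alpha_1)=(\alpha_0\alpha_1)^{1/2}\cdot\frac{\alpha_0\log^{[2]}(\alpha_0,\alpha_0,\alpha_1)-\alpha_1\log^{[2]}(\alpha_0,\alpha_1,\alpha_1)}{\alpha_1-\alpha_0}.$$
Since both sides are homogeneous of degree $0$ in $(\alpha_0,\alpha_1)$ (with an overall factor $(\alpha_0\alpha_1)^{1/2}$ on the right normalizing the scaling), it suffices to evaluate at $\alpha_0=1$, $\alpha_1=e^s$. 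Computing the divided differences $\log^{[1]}(1,e^s)=s/(e^s-1)$, $\log^{[2]}(1,1,e^s)=(s-e^s+1)/(e^s-1)^2$, and $\log^{[2]}(1,e^s,e^s)=(e^s-1-se^s)/(e^s(e^s-1)^2)$, substituting $e^s-1=2e^{s/2}\sinh(s/2)$ and $1+e^s=2e^{s/2}\cosh(s/2)$, and simplifying collapses the right-hand side to $(s\cosh(s/2)-2\sinh(s/2))/(2\sinh^2(s/2))$. Dividing by the factor $\tfrac{1}{2}\log^{[1]}(1,e^s)=s/(2(e^s-1))$ yields exactly $K_0(s)=(-2+s\coth(s/2))/(s\sinh(s/2))$.

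\textbf{Identifying $H_0$ (the main obstacle).} With $K_0$ now known, the equation $\Psi_{K_0,H_0}=\Psi$ becomes (after evaluating at $\alpha_0=1,\alpha_1=e^s,\alpha_2=e^{s+t}$) an algebraic expression for $H_0(s,t)$ in terms of the divided differences $\log^{[2]}(1,e^s,e^{s+t})$, $\log^{[3]}(1,e^s,e^s,e^{s+t})$, $\log^{[3]}(1,e^s,e^{s+t},e^{s+t})$, $\log^{[3]}(1,1,e^s,e^{s+t})$, and $\log^{[4]}(1,1,e^s,e^s,e^{s+t})$, along with the known values of $\log^{[1]}(1,e^s)$, $\log^{[1]}(e^s,e^{s+t})$, and $K_0(s+t)$. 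Explicitly, solving for $H_0$ gives
$$H_0(s,t)=\frac{4\bigl(\Psi(1,e^s,e^{s+t})-K_0(s+t)\log^{[2]}(1,e^s,e^{s+t})\bigr)}{\log^{[1]}(1,e^s)\log^{[1]}(e^s,e^{s+t})}.$$
The hard part is that the numerator, after substituting the explicit formula for $\Psi$ from Theorem \ref{cor:main thm k=d=2}, must be reduced to the closed form in the theorem statement. This requires expressing each divided difference of $\log$ as a rational function in $e^s$ and $e^{s+t}$, clearing denominators, and repeatedly using the identities $e^s-1=2e^{s/2}\sinh(s/2)$, $e^s+1=2e^{s/2}\cosh(s/2)$, and the analogous ones for $t$ and $s+t$. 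This step is entirely elementary but extensive; one can organize it as a verification by rational function algebra and avoid human error using a symbolic computation tool. The final numerator, once simplified, reproduces
$$t(s+t)\cosh(s)-s(s+t)\cosh(t)+(s-t)(s+t+\sinh(s)+\sinh(t)-\sinh(s+t)),$$
while the denominator becomes $st(s+t)\sinh(s/2)\sinh(t/2)\sinh^2((s+t)/2)$, matching $H_0(s,t)$ as stated. This completes the proof.
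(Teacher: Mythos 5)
Your proposal follows essentially the same route as the paper's proof: specialize Theorem \ref{cor:main thm k=d=2} to $d=2$ (so $F_{2,2}=\log$), identify $\Phi=\Phi_{K_0}$ and $\Psi=\Psi_{K_0,H_0}$ by homogeneity together with an explicit divided-difference computation in the variables $s,t$, and conclude via Lemma \ref{log diff lemma}; the paper verifies the $H_0$ identity by exactly the denominator-clearing rational/hyperbolic algebra you defer to symbolic computation. One cosmetic point: the intermediate quantity you display in the $K_0$ step is $(\alpha_1-\alpha_0)$ times the right-hand side (it equals $\tfrac{s}{2}K_0(s)$, whereas $\Phi(1,e^s)=\frac{s\cosh(s/2)-2\sinh(s/2)}{4e^{s/2}\sinh^{3}(s/2)}$), but this bookkeeping slip does not affect the correct final identification of $K_0$.
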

\begin{proof}
Let $\Phi,\Psi$ be as in Theorem \ref{cor:main thm k=d=2}.
A straightforward computation shows that
\calculations{
{\color{red} Below is the straightforward computation, which we will omit in the final text.} 
By definition (cf. Lemma \ref{I2 conformal d=2 first exact}), we have
$$\Phi(1,\alpha)=\alpha^{\frac12}\Big(\log^{[2]}(1,1,\alpha)+2\log^{[3]}(1,1,1,\alpha)\Big)=$$
$$=\alpha^{\frac12}\Big(\frac{\frac{\log(\alpha)}{\alpha-1}-1}{\alpha-1}+2\frac{\frac{\frac{\log(\alpha)}{\alpha-1}-1}{\alpha-1}+\frac12}{\alpha-1}\Big).$$
Thus,
$$\Phi(1,\alpha)\cdot\frac{\alpha-1}{\log(\alpha)}=\frac{\alpha^{\frac12}}{\log(\alpha)}\Big(\frac{\log(\alpha)}{\alpha-1}+2\frac{\frac{\log(\alpha)}{\alpha-1}-1}{\alpha-1}\Big).$$
Substituting $\alpha=e^s,$ and using $\frac{e^{s/2}}{e^s-1}=\frac{1}{2\sinh(\frac{s}{2})}$, $\frac{e^s+1}{e^s-1}=\coth(\frac{s}{2})$, we obtain
$$\Phi(1,\alpha)\cdot\frac{\alpha-1}{\log(\alpha)}=\frac{e^{\frac{s}{2}}}{s}\Big(\frac{s}{e^s-1}+2\frac{\frac{s}{e^s-1}-1}{e^s-1}\Big)=\frac1{2s\sinh(\frac{s}{2})}\Big(s+2\frac{s}{e^s-1}-2\Big)=$$
$$=\frac1{2s\sinh(\frac{s}{2})}\Big(s\frac{e^s+1}{e^s-1}-2\Big)=\frac{-2+s\coth(\frac{s}{2})}{2s\sinh(\frac{s}{2})}=\frac{1}{2}K_0(s).$$
}
$$\Phi(1,\alpha)=\frac12 K_0(\log(\alpha))\cdot\frac{\log(\alpha)}{\alpha-1},$$
for $\alpha>0$. By homogeneity, this yields
$$\Phi(\alpha_0,\alpha_1)=\frac12K_0(\log(\frac{\alpha_1}{\alpha_0}))\log^{[1]}(\alpha_0,\alpha_1)=\Phi_{K_0}(\alpha_0,\alpha_1),\quad \alpha_0,\alpha_1>0,$$
with $\Phi_{K_0}$ as in Lemma \ref{log diff lemma}.

We are now left to show that $\Psi=\Psi_{K_0,H_0}$, which by homogeneity comes down to showing that
\begin{align*}
-2(\alpha\beta)^{\frac12}g^{[3]}(\alpha,\alpha,\beta,\beta)=&K_0(\log(\beta/\alpha))\cdot \log^{[2]}(\alpha,1,\beta)\\
&+\frac14H_0(-\log(\alpha),\log(\beta))\cdot\log^{[1]}(\alpha,1)\cdot\log^{[1]}(1,\beta).
\end{align*}
By writing $-\log(\alpha)=s$ and $\log(\beta)=t$, one finds
\begin{align*}
&\frac{4}{\log^{[1]}(\alpha,1)\log^{[1]}(1,\beta)}(-2(\alpha\beta)^{\frac12}g^{[3]}(\alpha,\alpha,\beta,\beta)-K_0(\log(\beta/\alpha))\cdot \log^{[2]}(\alpha,1,\beta))\\
&=\frac{4}{\frac{\left(-st\right)}{\left(e^{-s}-1\right)\left(e^{t}-1\right)}}\Bigg(-2e^{-\frac{s}{2}}e^{\frac{t}{2}}\frac{1}{\left(e^{-s}-e^{t}\right)^{2}}\Bigg(\frac{s}{\left(e^{-s}-1\right)^{2}}+\frac{1}{e^{-s}-1}\\
&\quad-\frac{2}{e^{-s}-e^{t}}\left(\frac{e^{-s}}{e^{-s}-1}\left(-s\right)-\frac{e^{t}}{e^{t}-1}t\right)+\frac{-t}{\left(e^{t}-1\right)^{2}}+\frac{1}{e^{t}-1}\Bigg)\\
&\quad-\Bigg(-\frac{4e^{\frac{\left(t+s\right)}{2}}}{\left(t+s\right)\left(e^{\left(t+s\right)}-1\right)}+2e^{\left(\frac{t+s}{2}\right)}\left(\frac{\left(e^{\left(t+s\right)}+1\right)}{\left(e^{\left(t+s\right)}-1\right)^{2}}\right)\Bigg)\\
&\quad\cdot\left(\frac{1}{e^{-s}-e^{t}}\left(-\frac{s}{e^{-s}-1}-\frac{t}{e^{t}-1}\right)\right)\Bigg).
\end{align*}
Upon multiplying the latter expression with the denominator of $H_0(s,t)$, i.e., 
$$st(s+t)\sinh(s/2)\sinh(t/2)\sinh^2((s+t)/2),$$ and writing the result out explicitly, one straightforwardly obtains the numerator of $H_0(s,t)$, i.e., 
$$t(s+t)\cosh(s)-s(s+t)\cosh(t)+(s-t)(s+t+\sinh(s)+\sinh(t)-\sinh(s+t)).$$
This concludes the proof.
\calculations{
{\color{red} 
CALCULATIONS FOR OUR CONVENIENCE\\
One can also use desmos to check the above formulas: https://www.desmos.com/calculator/fxzg9kz7nw}
We write
$$g^{[3]}(\alpha,\alpha,\beta,\beta)=\frac{\beta}{\beta-1}\log^{[3]}(\alpha,\alpha,\beta,\beta)+\frac{-1}{(\beta-1)^2}\log^{[2]}(\alpha,\alpha,\beta)+$$
$$+\frac{1}{\alpha(\alpha-1)(\beta-1)^2}-\frac{\log(\alpha)}{(\alpha-1)^2(\beta-1)^2}.$$
By the above formula, we find that 
\begin{align*}
&\Big(-2(\alpha\beta)^{\frac12}g^{[3]}(\alpha,\alpha,\beta,\beta)
-K_0(\log(\frac{\beta}{\alpha}))\cdot \log^{[2]}(\alpha,1,\beta)\Big)\cdot\frac{4st(s+t)\sinh(s/2)\sinh(t/2)\sinh^2((s+t)/2)}{\log^{[1]}(\alpha,1)\cdot\log^{[1]}(1,\beta)}\\
&=
-\frac{1}{2}\left(s+t\right)\left(1-2e^{-s}+e^{-2s}\right)\left(e^{t}e^{2s}+\frac{2\left(s+t\right)}{e^{-s-t}-1}e^{s}+e^{s}-e^{2s}-\frac{2\left(s+t\right)}{e^{-s}-e^{t}}e^{s}-e^{s}e^{-t}\right)\\
&-\frac{1}{2}\left(s+t\right)\left(1-2e^{-s}+e^{-2s}\right)\left(e^{2s}+\frac{\left(s+t\right)}{e^{-s}-e^{t}}e^{s}-e^{s}e^{-t}-\frac{\left(s+t\right)}{e^{-s}-e^{t}}e^{-t}\right)\\
&+\frac{1}{2}e^{-t}\left(s+t\right)\left(1-e^{-s}\right)\left(e^{2\left(s+t\right)}-2e^{\left(s+t\right)}+1\right)-\frac{1}{2}\left(s+t\right)s\left(e^{\left(s+t\right)}-2+e^{-\left(s+t\right)}\right)\\
&-\frac{1}{2}\left(s\left(e^{\left(-s-t\right)}-e^{-s}\right)+t\left(e^{\left(-s-t\right)}-e^{-2s}e^{-t}\right)\right)\Bigg(\left(s+t\right)\left(e^{2s}-e^{s}\right)\left(e^{t}-1\right)\\
&+2\left(e^{s}-e^{2s}\right)\left(e^{t}-1\right)+\frac{2\left(1-e^{s}\right)\left(e^{t}-1\right)\left(s+t\right)}{e^{-s}-e^{t}}\Bigg)\\
&=
-\frac{1}{2}\left(s+t\right)\left(1-2e^{-s}+e^{-2s}\right)\left(\frac{\left(s+t\right)}{e^{-s}-e^{t}}\left(2e^{\left(s+t\right)}-2e^{s}+e^{s}-e^{-t}\right)+e^{t}e^{2s}+e^{s}-2e^{s}e^{-t}\right)\\
&+\frac{1}{2}\left(s+t\right)\left(e^{2s}e^{t}-2e^{s}+e^{-t}-e^{\left(s+t\right)}+2-e^{\left(-s-t\right)}\right)\\
&-\frac{1}{2}\left(s+t\right)s\left(e^{\left(s+t\right)}+e^{\left(-s-t\right)}-2\right)\\
&-\frac{1}{2}\left(s+t\right)\left(\left(s+t\right)\left(e^{2s}-e^{s}\right)\left(e^{t}-1\right)e^{\left(-s-t\right)}+2\left(1-e^{s}\right)\left(1-e^{-t}\right)+2\left(e^{-s}-1\right)\left(1-e^{-t}\right)\frac{s+t}{e^{-s}-e^{t}}\right)\\
&+\frac{1}{2}s\left(\left(s+t\right)\left(e^{s}-1\right)\left(e^{t}-1\right)+2\left(1-e^{s}\right)\left(e^{t}-1\right)+2\left(e^{-s}-1\right)\left(e^{t}-1\right)\frac{s+t}{e^{-s}-e^{t}}\right)\\
&+\frac{1}{2}t\left(\left(s+t\right)\left(1-e^{-s}\right)\left(1-e^{-t}\right)+2\left(e^{-s}-1\right)\left(1-e^{-t}\right)+2\left(e^{-2s}-e^{-s}\right)\left(1-e^{-t}\right)\frac{s+t}{e^{-s}-e^{t}}\right)
\end{align*}
We then use that
\begin{align*}
&\frac{\left(s+t\right)}{e^{-s}-e^{t}}\Bigg(-\frac{1}{2}\left(s+t\right)\left(1-2e^{-s}+e^{-2s}\right)\left(2e^{\left(s+t\right)}-e^{s}-e^{-t}\right)-\left(s+t\right)\left(e^{-s}-1\right)\left(1-e^{-t}\right)\\
&+\frac{1}{2}\left(s+t\right)\left(e^{-s}-1\right)\left(1-e^{-t}\right)e^{t}+\frac{1}{2}\left(s+t\right)\left(e^{-s}-1\right)\left(1-e^{-t}\right)e^{-s}+\frac{1}{2}\left(s-t\right)\left(e^{-s}-1\right)\left(1-e^{-t}\right)e^{t}\\
&-\frac{1}{2}\left(s-t\right)\left(e^{-s}-1\right)\left(1-e^{-t}\right)e^{-s}\Bigg)\\
&=
\frac{1}{2}\left(s+t\right)^{2}\left(e^{-s}-2+e^{-t}-1+2e^{s}-e^{s}e^{-t}\right)-\frac{1}{2}\left(s+t\right)\left(s-t\right)\left(e^{-s}-e^{-s}e^{-t}-1+e^{-t}\right)
\end{align*}
Plugging this in and doing some more rewriting, we find
\begin{align*}
&\Big(-2(\alpha\beta)^{\frac12}g^{[3]}(\alpha,\alpha,\beta,\beta)
-K_0(\log(\frac{\beta}{\alpha}))\cdot \log^{[2]}(\alpha,1,\beta)\Big)\cdot\frac{4st(s+t)\sinh(s/2)\sinh(t/2)\sinh^2((s+t)/2)}{\log^{[1]}(\alpha,1)\cdot\log^{[1]}(1,\beta)}\\
&=
\frac{1}{2}\left(s+t\right)^{2}\left(-3+e^{-s}+e^{-t}+2e^{s}-e^{s}e^{-t}\right)+\frac{1}{2}\left(s+t\right)\left(s-t\right)\left(1-e^{-s}+e^{-s}e^{-t}-e^{-t}\right)\\
&+\frac{1}{2}\left(s+t\right)\left(-3e^{s}+2e^{s}e^{-t}+e^{\left(s+t\right)}+2-4e^{-t}-e^{t}-e^{-s}+e^{\left(-s-t\right)}+e^{-t}+2\right)\\
&+\frac{1}{2}\left(s+t\right)t\left(1-e^{-t}-e^{-s}+e^{\left(-s-t\right)}\right)+\frac{1}{2}\left(s+t\right)s\left(2-e^{\left(-s-t\right)}-e^{s}-e^{t}+1\right)\\
&+s\left(e^{t}-1-e^{\left(s+t\right)}+e^{s}\right)\\
&+\frac{1}{2}\left(s+t\right)\left(\left(s+t\right)\left(-e^{2s}e^{t}+e^{2s}+e^{\left(s+t\right)}-e^{s}\right)e^{\left(-s-t\right)}-2+2e^{-t}+2e^{s}-2e^{s}e^{-t}\right)\\
&+\frac{1}{2}t\left(2e^{-s}-2e^{\left(-s-t\right)}-2+2e^{-t}\right)\\
&=
\frac{1}{2}s\left(e^{s}-e^{\left(s+t\right)}-e^{-t}+e^{t}-e^{-s}+e^{\left(-s-t\right)}\right)+\frac{1}{2}t\left(-e^{s}+e^{\left(s+t\right)}+e^{-t}-e^{t}+e^{-s}-e^{\left(-s-t\right)}\right)\\
&+\frac{1}{2}\left(s+t\right)s\left(2-e^{-t}-e^{t}\right)+\frac{1}{2}\left(s+t\right)t\left(-2+e^{-s}+e^{s}\right)\\
&=
t(s+t)\cosh(s)-s(s+t)\cosh(t)+(s-t)(s+t+\sinh(s)+\sinh(t)-\sinh(s+t))
\end{align*}
as required.}
\end{proof}

\subsection{$K_0$ for general d}
We now give an example of a completely new modular formula that can be obtained from our main result, which generalises the function $K_0$ that appears in the main result of \cite{Connes-Moscovici} to any dimension. 
\calculations{
{\color{red}Best visualisation:}
https://www.desmos.com/calculator/gzz5emrh5o
}
\begin{thm}\label{thm:K0 arbitrary d}
For all $d\in\mathbb N_{\geq3}$, $x=e^h$, $h=h^*\in C^\infty(\mathbb T^d_\theta)$, $P=\lambda_l(x^{1/2})\Delta\lambda_l(x^{1/2})$ acting in $L_2(\mathbb T^d_\theta)$, we have
\begin{align}\label{eq:thm K0 arbitrary d}
I_2(P)=-\frac{\pi^{d/2}}{2}e^{(1-d/2)h}\left(K^d_0(\nabla)(\Delta h)+\frac{1}{2}\sum_{i=1}^d H^d_0(\nabla_1,\nabla_2)(D_ih,D_ih)\right),
\end{align}
for some function $H^d_0$ and
\begin{align}\label{eq:K_0^d}
K_0^d(s)=\frac{2}{d}\cdot\frac{-1-e^{(1-\frac{d}{2})s}+\frac{e^{\left(1-d/2\right)s}-1}{1-d/2}\coth\left(\frac{s}{2}\right)}{s\sinh\left(\frac{s}{2}\right)}.
\end{align}
The above formula in fact defines a function $K_0^d$ for every $d\in(2,\infty)$, satisfying
$$\lim_{d\to 2}K_0^d(s)=K_0(s),$$ with $K_0$ as in Theorem \ref{thm:Connes-Moscovici}. Moreover, $K_0^4=0$.
\end{thm}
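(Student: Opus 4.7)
The plan is to follow the scheme of the proof of Theorem \ref{thm:Connes-Moscovici}: combine Theorem \ref{cor:main thm k=d=2} (which gives $-\pi^{-d/2}I_2(P) = T^x_\Phi(\Delta x) + \sum_{i=1}^d T^x_\Psi(D_ix,D_ix)$) with a translation from the symbol-based multiple operator integral to the modular functional calculus via Lemma \ref{log diff lemma}. The key simplification for $d \geq 3$ is that Remark \ref{rem:Fkd} identifies $F_{2,d}(\alpha) = -\frac{2}{d-2}\alpha^{1-d/2}$, so the symbols $\Phi$ and $\Psi$ from Theorem \ref{cor:main thm k=d=2} become explicit homogeneous rational functions in $\alpha_0,\alpha_1,\alpha_2$ amenable to closed-form simplification in the variable $s = \log(\alpha_1/\alpha_0)$.

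The central algebraic step I would carry out is to verify
$\Phi(\alpha_0,\alpha_1) = \frac{1}{2}\alpha_0^{1-d/2}K_0^d(\log(\alpha_1/\alpha_0))\log^{[1]}(\alpha_0,\alpha_1)$,
with $K_0^d$ as in \eqref{eq:K_0^d}. Since $\Phi$ is homogeneous of degree $-d/2$, it suffices to evaluate at $(\alpha_0,\alpha_1)=(1,e^s)$. Plugging in $F_{2,d}^{[2]}$ for the power function $\alpha^p$ with $p=1-d/2$ and simplifying using $1-e^s = -2e^{s/2}\sinh(s/2)$ and $1+e^s = 2e^{s/2}\cosh(s/2)$ — the same identities already used in the proof of Theorem \ref{thm:Connes-Moscovici} — reduces both sides to a common hyperbolic form, establishing the identification. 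Feeding this into Lemma \ref{log diff lemma} (with $K = K_0^d$, and allowing for the extra left multiplier $e^{(1-d/2)h}$, which in symbol language is the prefactor $\alpha_0^{1-d/2}$) produces the $K_0^d(\nabla)(\Delta h)$-term in \eqref{eq:thm K0 arbitrary d}. The residual $(D_ih,D_ih)$-contributions generated by the expansion $\Delta h = T^x_{\log^{[1]}}(\Delta x) + 2\sum_i T^x_{\log^{[2]}}(D_ix,D_ix)$, together with the translation of the $T^x_\Psi(D_ix,D_ix)$-sum through Lemma \ref{log diff lemma}, then package into $\frac{1}{4}e^{(1-d/2)h}H_0^d(\nabla_1,\nabla_2)(D_ih,D_ih)$ for a suitable function $H_0^d$ — whose explicit form is implicitly defined but not required by the statement.

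The limit and special-value assertions are then immediate. As $d \to 2$, L'H\^opital gives $\frac{e^{(1-d/2)s}-1}{1-d/2} \to s$, while $e^{(1-d/2)s}\to 1$ and $\frac{2}{d} \to 1$, so $K_0^d(s) \to \frac{-2+s\coth(s/2)}{s\sinh(s/2)} = K_0(s)$. For $d=4$, $1-d/2=-1$, and the elementary identity $(1-e^{-s})\coth(s/2) = e^{-s/2}(e^{s/2}-e^{-s/2})\cdot\frac{e^{s/2}+e^{-s/2}}{e^{s/2}-e^{-s/2}} = 1+e^{-s}$ forces the numerator $-1-e^{-s}+(1-e^{-s})\coth(s/2)$ to vanish, whence $K_0^4\equiv 0$.

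The main obstacle is the central algebraic verification: expanding $F_{2,d}^{[2]}(1,1,e^s)$ and $F_{2,d}^{[2]}(1,e^s,e^s)$ for the power function, collecting the combination $F_{2,d}^{[2]}(1,1,e^s) - e^s F_{2,d}^{[2]}(1,e^s,e^s)$ over the common denominator $(1-e^s)^2$, and matching the result with the hyperbolic form of $K_0^d$ requires tracking the parameter $p=1-d/2$ appearing both as a constant factor and inside the exponent $e^{ps}$ — so the required cancellations are more delicate than in the $d=2$ Connes--Moscovici computation, and the analogous (implicit) determination of $H_0^d$ from $\Psi$ is considerably more elaborate.
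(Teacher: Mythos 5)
Your proposal is correct and follows essentially the same route as the paper: combine Theorem \ref{cor:main thm k=d=2} with Lemma \ref{log diff lemma}, use homogeneity to reduce the identification $\Phi(\alpha_0,\alpha_1)=\tfrac12\alpha_0^{1-d/2}K_0^d(\log(\alpha_1/\alpha_0))\log^{[1]}(\alpha_0,\alpha_1)$ to the evaluation at $(1,e^s)$ and simplify the power-function divided differences into the hyperbolic form \eqref{eq:K_0^d}, with $H_0^d$ left implicit. The limit $d\to2$ and the vanishing of $K_0^4$ are handled exactly as in the paper, via $\lim_{d\to2}\frac{e^{(1-d/2)s}-1}{1-d/2}=s$ and $\coth(\tfrac s2)=\frac{1+e^{-s}}{1-e^{-s}}$.
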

\begin{figure}
\begin{center}
\includegraphics[scale=0.125]{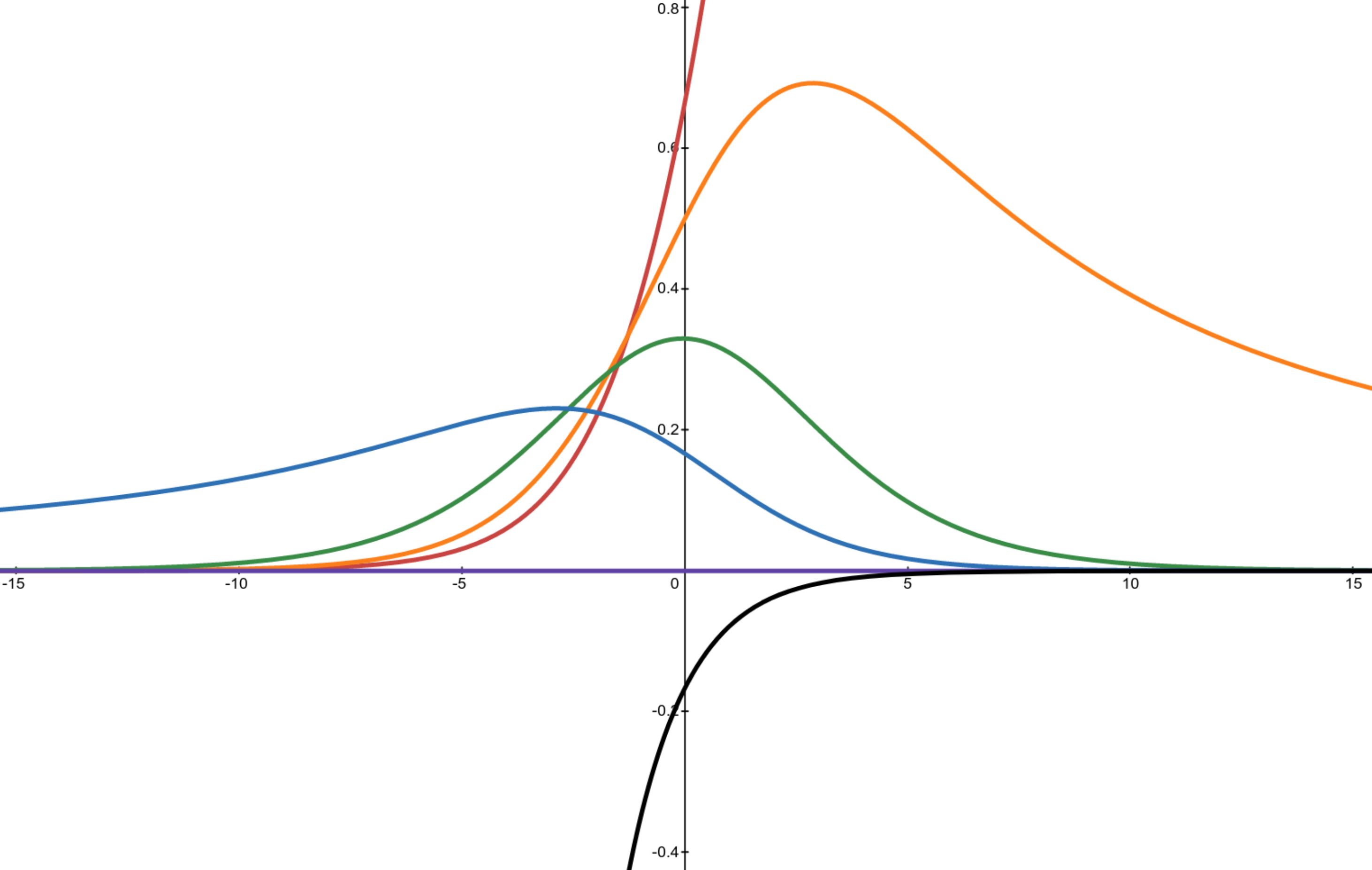}
\end{center}
\caption{The function $K_0^d$ for $d=0.01$ (red), $d=1$ (orange), $d=2.01$ (green), $d=3$ (blue), $d=4$ (purple, horizontal axis), and $d=5$ (black), plotted with Desmos.}
\end{figure}
\begin{proof}
By Theorem \ref{cor:main thm k=d=2} and Lemma \ref{log diff lemma}, we obtain \eqref{eq:thm K0 arbitrary d} exactly when $\Phi(\alpha_0,\alpha_1)=\alpha_0^{1-\frac{d}{2}}\Phi_{K_0^d}(\alpha_0,\alpha_1)$ and $\Psi(\alpha_0,\alpha_1,\alpha_2)=\alpha_0^{1-\frac{d}{2}}\Psi_{K_0^d,H_0^d}(\alpha_0,\alpha_1,\alpha_2)$. By homogeneity, we are left to derive \eqref{eq:K_0^d} for the function $K_0^d$ defined by
\begin{align}\label{eq:K_0^d in pf thm}
K_0^d(\log(\alpha)):=\frac{2}{\log^{[1]}(1,\alpha)}\cdot \Phi(1,\alpha),
\end{align}
with $\Phi$ from Theorem \ref{cor:main thm k=d=2}, namely 
\begin{align}\label{eq:Phi in pf thm K_0^d}
\Phi(1,\alpha)=\frac{2\alpha^{1/2}}{d}\frac{F_{2,d}^{[2]}(1,1,\alpha)-\alpha F_{2,d}^{[2]}(1,\alpha,\alpha)}{\alpha-1}.
\end{align}
Combining \eqref{eq:K_0^d in pf thm} with \eqref{eq:Phi in pf thm K_0^d} and substituting $\alpha=e^s$, we obtain
\calculations{
\begin{align*}
K_0^d(s)=&\frac{2(e^s-1)}{s}\cdot \frac{2e^{s/2}}{d}\Bigg(\frac{1}{(\alpha-1)^2}\Bigg(\frac{1}{1-\frac{d}{2}}\frac{\alpha^{1-\frac{d}{2}}-1}{\alpha-1}-1\Bigg)\\
&-\frac{\alpha}{(\alpha-1)^2}\Bigg(\alpha^{-\frac{d}{2}}-\frac{1}{1-\frac{d}{2}}\frac{\alpha^{1-\frac{d}{2}}-1}{\alpha-1}\Bigg)\Bigg)
\end{align*}
}
\begin{align*}
K_0^d(s)=&\frac{2(e^s-1)}{s}\cdot \frac{2e^{s/2}}{d}\Bigg(\frac{1}{(e^s-1)^2}\Bigg(\frac{1}{1-\frac{d}{2}}\frac{e^{(1-\frac{d}{2})s}-1}{e^s-1}-1\Bigg)\\
&-\frac{e^s}{(e^s-1)^2}\Bigg(e^{-\frac{d}{2}s}-\frac{1}{1-\frac{d}{2}}\frac{e^{(1-\frac{d}{2})s}-1}{e^s-1}\Bigg)\Bigg).
\end{align*}
Simplifying the above formula, one obtains the desired form of $K_0^d$. The last two statements of the theorem follow by using $\lim_{d\to2}\frac{e^{\left(1-d/2\right)s}-1}{1-d/2}=s$ and $\coth(\frac{s}{2})=\frac{1+e^{-s}}{1-e^{-s}}$, respectively.
\calculations{
{\color{red}FOR OUR CONVENIENCE}
the formulas below (between dollar signs) can be copied and pasted directly into Desmos. You can see that they are equal algebraically or by looking at the graphs for varying $d$.
$$K_{a}\left(s\right)=4\frac{e^{s}-1}{s}\frac{e^{\frac{s}{2}}}{d}\frac{1}{e^{s}-1}\left(\frac{1}{e^{s}-1}\left(\frac{1}{1-\frac{d}{2}}\left(\frac{e^{\left(1-\frac{d}{2}\right)s}-1}{e^{s}-1}\right)-1\right)-\frac{e^{s}}{e^{s}-1}\left(e^{-\frac{ds}{2}}-\frac{1}{1-\frac{d}{2}}\left(\frac{e^{\left(1-\frac{d}{2}\right)s}-1}{e^{s}-1}\right)\right)\right)$$
$$K_{b}\left(s\right)=\frac{2}{s\sinh\left(\frac{s}{2}\right)}\frac{1}{d}\left(\frac{\left(e^{\left(1-\frac{d}{2}\right)s}-1\right)}{\left(1-\frac{d}{2}\right)\left(e^{s}-1\right)}-1-e^{\left(1-\frac{d}{2}\right)s}+e^{s}\frac{\left(e^{\left(1-\frac{d}{2}\right)s}-1\right)}{\left(1-\frac{d}{2}\right)\left(e^{s}-1\right)}\right)$$
$$K_{c}\left(s\right)=\frac{2}{d}\frac{-1-e^{\left(1-\frac{d}{2}\right)s}+\frac{\left(e^{\left(1-\frac{d}{2}\right)s}-1\right)}{1-\frac{d}{2}}\coth\left(\frac{s}{2}\right)}{s\sinh\left(\frac{s}{2}\right)}$$
See https://www.desmos.com/calculator/ffjpbehufq.

}
\calculations{
{\color{red}A DIFFERENT WAY OF CALCULATING}
A different way of calculating is given by looking directly at Corollary \ref{cor:main thm k=2} (and using the conjugation property on $P'=\lambda_l(x)\Delta$). Corollary \ref{cor:main thm k=2} shows that the part of $\frac{-1}{\pi^{d/2}}I_2(P)$ that is relevant for $K_0^d$ is given by
$$x^{-1/2}\left(T^x_{F_{2,d}^{[2]}}(x,\Delta x)+\frac{4}{d}T^x_{F_{2,d}^{[3]}}(x,x,\Delta x)\right)x^{1/2}.$$
In the same way as Fact \ref{phi fact}, we apply Lemma \ref{log diff lemma} to the above two terms to obtain
\begin{align*}
I_2(P)=-\frac{\pi^{d/2}}{2} T^x_{\Phi^d}(\Delta x)+\sum_i T^x_{\Psi^d}(D_ix,D_ix)
\end{align*}
for a function $\Phi^d$ homogeneous of degree $-d/2$ satisfying
$$\Phi^d(1,\alpha)=\alpha^{1/2}(F_{2,d}^{[2]}(1,1,\alpha)+\frac{4}{d}F_{2,d}^{[3]}(1,1,1,\alpha)).$$
The first term of the right-hand side of \eqref{eq:thm K0 arbitrary d} becomes
\begin{align*}
-&\frac{\pi^{d/2}}{2} e^{(1-d/2)h}K_0^d(\nabla)(\Delta h)\\
&=T^x_{\frac 12\alpha_0^{1-d/2}K_0^d(\log(\alpha_1/\alpha_0))\log^{[1]}(\alpha_0,\alpha_1)}(\Delta x)+\sum_i T^x_{\Psi_d'}(D_ix,D_ix).
\end{align*}
We note that the symbols on the right-hand side are homogeneous of degree $-d/2$. Moreover, we have, with $\alpha=e^s$,
\begin{align*}
K_0^d(s)=&K_0(\log(\alpha))=\frac{2}{\log^{[1]}(1,\alpha)}\Phi^d(1,\alpha)\\
=&\frac{2(e^s-1)}{s}e^{s/2}\big (F_{2,d}^{[2]}(\alpha,1,1)+\frac{4}{d}F_{2,d}^{[3]}(\alpha,1,1,1)\big)\\
=&\frac{2(e^s-1)}{s}e^{s/2}\Bigg(\frac{1}{e^s-1}\left(\frac{e^{(1-d/2)s}-1)}{(1-\frac{d}{2})(e^s-1)}-1\right)\\
&+\frac{\frac{4}{d}}{e^s-1}\left(\frac{1}{e^s-1}\left(\frac{e^{(1-\frac{d}{2})s}-1}{(1-\frac{d}{2})(e^s-1)}-1\right)+\frac{d}{4}\right)\Bigg).
\end{align*}

The formula below (between dollar signs) can be copied and pasted directly into Desmos.
$$K_{e}\left(s\right)=\frac{2\left(e^{s}-1\right)}{s}e^{\frac{s}{2}}\left(\frac{1}{e^{s}-1}\left(\frac{e^{\left(1-\frac{d}{2}\right)s}-1}{\left(1-\frac{d}{2}\right)\left(e^{s}-1\right)}-1\right)+\frac{\frac{4}{d}}{e^{s}-1}\left(\frac{1}{e^{s}-1}\left(\frac{e^{\left(1-\frac{d}{2}\right)s}-1}{\left(1-\frac{d}{2}\right)\left(e^{s}-1\right)}-1\right)+\frac{d}{4}\right)\right)$$
Indeed, the above equals
$$K_{c}\left(s\right)=\frac{2}{d}\frac{-1-e^{\left(1-\frac{d}{2}\right)s}+\frac{\left(e^{\left(1-\frac{d}{2}\right)s}-1\right)}{1-\frac{d}{2}}\coth\left(\frac{s}{2}\right)}{s\sinh\left(\frac{s}{2}\right)}$$
according to desmos.
}
\end{proof}
Generalisations of the functions $H_0$, $K$, $H$, $S$ \textit{et cetera} appearing in the main result \cite[Theorem 3.2]{Connes-Moscovici} can be similarly obtained. In fact, similar formulas for $k>2$ are now within easy reach. 
Although we have proved the theorem above as a consequence of Theorem \ref{cor:main thm k=d=2} (which we think is interesting in its own right) we stress that it can also be obtained directly from Theorem \ref{thm: recursive formula}, with the only difference that the intermediate formulas become longer.
In this way one can obtain any function in the modular operator, for any $P$ and $k$ one chooses.

\section{The relation with the Iochum--Masson-approach for rational $\theta$}\label{sct:Iochum-Masson}
We now relate our approach with the one of \cite{IM1,IM2,IM3}, in which Iochum and Masson calculate the local invariants for differential operators on finite dimensional bundles over manifolds.
Suppose that $\theta\in M_d(\mathbb R)$ is such that we can identify
$$C^\infty(\mathbb T_\theta^d)\subseteq M_N(C^\infty(\mathbb T^d)),$$
where $M_N(C^\infty(\mathbb T^d))$ is the algebra of $N\times N$ matrices with entries in $C^\infty(\mathbb T^d)$. There is such an inclusion for $d=2$ and rational $\theta$, or in higher dimensions under a slightly more convoluted condition on the entries of $\theta\in M_d(\mathbb R)$ (cf. \cite{Rieffel1990}). In these cases the results of \cite{IM1,IM2,IM3} can be applied. In \cite[Appendix B]{IM2}, the final result of \cite{IM2} is compared to the final result of \cite{FaKh1}, for $k=2$ and $d=2$. Here, we compare our result to \cite{IM1} for any $d$ and any $k$. 

For all $m\in\mathbb N$, $\xi\in\mathbb R^d$ and all matrix-valued differential operators $B_1,\ldots,B_m$, \cite[eq. (2.1)]{IM1} defines a matrix denoted $f_m(\xi)[B_1\otimes\cdots\otimes B_m]\in M_N(\mathbb C)$, by setting, for all $v\in \mathbb C^N$,
\begin{align}\label{eq:Iochum-Masson functional calc}
&f_m(\xi)[B_1\otimes\cdots\otimes B_m]v\\
&:=\int_{\Delta_m}e^{(s_1-1)|\xi|^2\lambda_l(x)}B_1e^{(s_2-s_1)|\xi|^2\lambda_l(x)}\cdots B_me^{(s_{m+1}-s_m)|\xi|^2\lambda_l(x)}(1_M\otimes v)\,ds,\nonumber
\end{align}
where $(1_M\otimes v)$ is just the section in $C^\infty(\mathbb T^d;\mathbb C^N))$ that is constantly $v$, and $\Delta_m=\{s\in\mathbb R^m_+:~0\leq s_m\leq\cdots\leq s_1\leq 1\}$ is the simplex equipped with the flat measure $ds$ of total variation $1/m!$, and $s_{m+1}:=0$. The integrand in \eqref{eq:Iochum-Masson functional calc} can be identified with a $\mathbb C^N$-valued function on $\mathbb T^d$ for every $\xi$ and $s$, which for every $\xi$ is Bochner-integrable in $s$.

With respect to the notation of \cite{IM1}, we restrict ourselves to $g^{\mu\nu}=\delta_{\mu\nu}$,
and substitute $u^{\mu\nu}=x\delta_{\mu\nu}$, $v^\mu(\cdot)=ia_\mu$, $w=-a$, and ${\rm tr}=(2\pi)^d\tau$ into the formulas \cite[eqs. (1.6-1.7)]{IM1}, and note that we may identify $-i\partial_j=D_j$.
In this case, the formulas \cite[(2.4-2.5), etc.]{IM1} state that the local invariants of order $k\in\{0,2,4\}$ are given by
\begin{align}\label{eq:Iochum-Masson}
I_k(P)=\int_{\mathbb R^d}\sum_{\frac k2\leq m\leq k}(-1)^m\sum_{\substack{\mathscr A\subseteq \{1,\ldots,m\}\\|\mathscr A|=2m-k}}f_m(\xi)[W_1^{\mathscr A}(\xi)\otimes\cdots\otimes W_m^{\mathscr A}(\xi)]\, d\xi,
\end{align}
in which we use the notation $W^\mathscr{A}_i$ from \eqref{eq:W notation}.
Clearly the formula \eqref{eq:Iochum-Masson} works for any $k$, which Iochum and Masson have also noted in \cite{IM202X}.

We can express their `functional calculus' \eqref{eq:Iochum-Masson functional calc} as a multiple operator integral.
\begin{lem}\label{lem:1st calc for coincidence with Iochum-Masson} For all elements $b_1\ldots,b_m\in C^\infty(\mathbb T^d_\theta)$ we have
\begin{align}
&(-1)^m f_m(\xi)[b_1\otimes\cdots\otimes b_m]=
T^{|\xi|^2 x}_{g^{[m]}}(b_1,\ldots,b_m),
\end{align}
where $g(\alpha):=e^{-\alpha}$.
\end{lem}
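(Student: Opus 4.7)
The plan is to write both sides of the claimed identity as explicit integrals over a simplex of alternating products of exponentials and the $b_j$, and then to match them by a linear change of variables.

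First, I would exploit that $g(\alpha)=e^{-\alpha}$ satisfies $g^{(m)}(\alpha)=(-1)^m e^{-\alpha}$, so the simplex integral representation of divided differences (the one used in the proof of Lemma~\ref{lem:MOI bound Banach}) gives
$$
g^{[m]}(\alpha_0,\ldots,\alpha_m)=\int_{S^m} g^{(m)}\Big(\textstyle\sum_{j=0}^m \lambda_j\alpha_j\Big)\,d\lambda=(-1)^m\int_{S^m}\prod_{j=0}^m e^{-\lambda_j \alpha_j}\,d\lambda.
$$
Since this is in the separated form \eqref{eq:sep of variables phi}, Definition~\ref{def:MOI} yields
$$
T^{|\xi|^2 x}_{g^{[m]}}(b_1,\ldots,b_m)=(-1)^m\int_{S^m} e^{-\lambda_0 |\xi|^2 x}\,b_1\,e^{-\lambda_1 |\xi|^2 x}\cdots b_m\,e^{-\lambda_m |\xi|^2 x}\,d\lambda.
$$

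Next, I would unpack the Iochum--Masson expression when each $B_j$ is scalar multiplication by $b_j\in C^\infty(\mathbb T^d_\theta)\subseteq M_N(C^\infty(\mathbb T^d))$. Because $\lambda_l$ is a $*$-homomorphism, $e^{s|\xi|^2\lambda_l(x)}=\lambda_l(e^{s|\xi|^2 x})$ acts on sections as pointwise left multiplication by the matrix-valued function $e^{s|\xi|^2 x}$, so the composition in \eqref{eq:Iochum-Masson functional calc} reduces to left multiplication by
$$
y(\xi,s):=e^{(s_1-1)|\xi|^2 x}\,b_1\,e^{(s_2-s_1)|\xi|^2 x}\,b_2\cdots b_m\,e^{-s_m|\xi|^2 x}
$$
applied to the constant section $1_M\otimes v$. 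This identifies $f_m(\xi)[b_1\otimes\cdots\otimes b_m]$ with $\int_{\Delta_m}y(\xi,s)\,ds\in C^\infty(\mathbb T^d_\theta)$.

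Finally, I would apply the linear change of variables $\lambda_0:=1-s_1$ and $\lambda_j:=s_j-s_{j+1}$ for $1\leq j\leq m$ (with $s_{m+1}:=0$), which bijects $\Delta_m$ with $S^m$ with upper-triangular Jacobian of determinant~$1$; both simplices carry the flat measure of total mass $1/m!$, so the substitution preserves the measure. Under it $e^{(s_{j+1}-s_j)|\xi|^2 x}=e^{-\lambda_j|\xi|^2 x}$, and one obtains
$$
f_m(\xi)[b_1\otimes\cdots\otimes b_m]=\int_{S^m}e^{-\lambda_0|\xi|^2 x}\,b_1\,e^{-\lambda_1|\xi|^2 x}\cdots b_m\,e^{-\lambda_m|\xi|^2 x}\,d\lambda,
$$
which is precisely $(-1)^m T^{|\xi|^2 x}_{g^{[m]}}(b_1,\ldots,b_m)$.

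The only mildly delicate step is the bookkeeping in the middle move, namely confirming that the Iochum--Masson output, originally defined by its action on constant sections $1_M\otimes v$, is faithfully represented by the $C^\infty(\mathbb T^d_\theta)$-element $\int_{\Delta_m}y(\xi,s)\,ds$; once this identification and the simplex change of variables are in hand, the lemma follows by inspection.
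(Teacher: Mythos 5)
Your argument is correct, and it reduces the lemma to the same core fact as the paper does: both identify the Iochum--Masson expression, acting on constant sections, with the $C^\infty(\mathbb T^d_\theta)$-valued simplex integral $\int_{\Delta_m}e^{(s_1-1)|\xi|^2x}b_1\cdots b_m e^{-s_m|\xi|^2x}\,ds$, and both then recognise this as a multiple operator integral whose symbol must be shown to equal $g^{[m]}$. Where you diverge is in how that scalar identity is verified: the paper computes the symbol $(-1)^m\int_{\Delta_m}e^{(s_1-1)\alpha_0}e^{(s_2-s_1)\alpha_1}\cdots e^{-s_m\alpha_m}\,ds$ inductively, integrating out the innermost variable via $\frac{e^{-s_m\alpha_m}-e^{-s_m\alpha_{m+1}}}{\alpha_m-\alpha_{m+1}}=-\int_0^{s_m}e^{(s_{m+1}-s_m)\alpha_m}e^{-s_{m+1}\alpha_{m+1}}\,ds_{m+1}$ and the recursion \eqref{eq:second id dd}, whereas you invoke the Hermite--Genocchi representation $g^{[m]}(\alpha_0,\ldots,\alpha_m)=\int_{S^m}g^{(m)}\big(\sum_j\lambda_j\alpha_j\big)\,d\lambda$ (already used in the proof of Lemma \ref{lem:MOI bound Banach}) and pass from $\Delta_m$ to $S^m$ by the measure-preserving affine substitution $\lambda_0=1-s_1$, $\lambda_j=s_j-s_{j+1}$; this trades the induction for a single change of variables and is, if anything, more transparent. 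One small point of hygiene: the factors $a_j(\alpha_j,\lambda)=e^{-\lambda_j\alpha_j}$ are not bounded on all of $\mathbb R\times S^m$, so to place your separated representation literally within Definition \ref{def:MOI} you should either restrict to the (positive) spectrum of $|\xi|^2x$, on which these factors are bounded, or replace $g$ by a Schwartz function agreeing with $e^{-\alpha}$ on $(0,\infty)$ and use that $\phi\mapsto T^x_\phi$ only depends on $\phi|_{{\rm spec}(x)^{m+1}}$ --- exactly the device the paper uses in the proof of Theorem \ref{ik base of recursion}. With that remark added, your proof is complete.
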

\begin{proof}
We obtain,
\begin{align}\label{eq:1st calc for coincidence with Iochum-Masson}
&(-1)^m f_m(\xi)[b_1\otimes\cdots\otimes b_m]v\nonumber\\
&=(-1)^m\int_{\Delta_m}e^{(s_1-1)|\xi|^2x}b_1e^{(s_2-s_1)|\xi|^2x}\cdots b_me^{-s_m|\xi|^2x}v\,ds.
\end{align}
As $b_1,\ldots,b_m$ are bounded operators, the above expression is a multiple operator integral for which we can compute the symbol inductively. Indeed, by using
$$\frac{e^{-s_m\alpha_m}-e^{-s_m\alpha_{m+1}}}{\alpha_m-\alpha_{m+1}}=-\int_0^{s_m}e^{(s_{m+1}-s_m)\alpha_m}e^{-s_{m+1}\alpha_{m+1}}\,ds_{m+1}.$$
we find
\begin{align*}
(-1)^m\int_{\Delta_m}e^{(s_1-1)\alpha_0}e^{(s_2-s_1)\alpha_1}\cdots e^{-s_m\alpha_m}\,ds=g^{[m]}(\alpha_0,\ldots,\alpha_m),
\end{align*}
which is the symbol of the multiple operator integral of \eqref{eq:1st calc for coincidence with Iochum-Masson}.
\end{proof}

\begin{prop}\label{prop:IM operators}
For all $b_1,\ldots,b_m\in C^\infty(\mathbb T^d_\theta)$ we have
\begin{align}\label{eq:IM operators}
\frac{1}{2\pi}|\xi|^{-2m}\int_{-\infty}^\infty S^m_{\xi,i\lambda}(b_1,\ldots,b_m)e^{i\lambda}\,d\lambda=(-1)^m f_m(\xi)[b_1\otimes\cdots\otimes b_m].
\end{align}
More generally, for all $\mathbf B_1,\ldots,\mathbf B_m\in \mathcal{X}$ we have
\begin{align*}
\frac{1}{2\pi}|\xi|^{-2m}\int_{-\infty}^\infty S^m_{\xi,i\lambda}(\mathbf B_1,\ldots,\mathbf B_m)e^{i\lambda}\,d\lambda=(-1)^{m} f_m(\xi)[\pi(\mathbf B_1)\otimes\cdots\otimes \pi(\mathbf B_m)].
\end{align*}
\end{prop}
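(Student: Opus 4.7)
The plan is to first treat the scalar case (the first displayed identity in the proposition) and then bootstrap to arbitrary $\mathbf{B}_i \in \mathcal{X}$ by matching recursive identities.

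\emph{Scalar case.} When $\mathbf{B}_i = b_i \in C^\infty(\mathbb{T}^d_\theta)$, equation \eqref{eq:S and T} from the proof of Theorem \ref{ik base of recursion} (specialised to $k=0$) gives
$$\frac{1}{2\pi}\int_{-\infty}^\infty S^m_{\xi, i\lambda}(b_1,\ldots,b_m)e^{i\lambda}\,d\lambda = T^x_{(\sigma_{|\xi|^{-2}}g)^{[m]}}(b_1,\ldots,b_m), \qquad g(\alpha) = e^{-\alpha}.$$
Using the scaling property $(\sigma_{|\xi|^{-2}}g)^{[m]}(\alpha_0,\ldots,\alpha_m) = |\xi|^{2m} g^{[m]}(|\xi|^2\alpha_0, \ldots, |\xi|^2\alpha_m)$ of divided differences together with the substitution rule $T^x_{\psi(c\,\cdot,\ldots,c\,\cdot)} = T^{cx}_{\psi}$, the right-hand side equals $|\xi|^{2m}T^{|\xi|^2 x}_{g^{[m]}}(b_1,\ldots,b_m)$, which Lemma \ref{lem:1st calc for coincidence with Iochum-Masson} identifies with $(-1)^m|\xi|^{2m}f_m(\xi)[b_1\otimes \cdots\otimes b_m]$. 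Dividing by $|\xi|^{2m}$ yields \eqref{eq:IM operators}.

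\emph{Matching recursion for $f_m$.} For the general statement, the key is to derive recursive identities for $f_m(\xi)[\pi(\mathbf{B}_1) \otimes \cdots \otimes \pi(\mathbf{B}_m)]$ mirroring Lemmas \ref{zeroth workhorse lemma} and \ref{first workhorse lemma}. Writing $A_j := \pi(\mathbf{B}_j)$ and assuming $b_{k+1},\ldots,b_m \in C^\infty(\mathbb{T}^d_\theta)$, the claim for $k < m$ is
\begin{align*}
&f_m(\xi)[\cdots \otimes A_k D_i \otimes \lambda_l(b_{k+1}) \otimes \cdots]\\
&= -|\xi|^2 f_{m+1}(\xi)[\cdots \otimes A_k \otimes \lambda_l(D_ix) \otimes \lambda_l(b_{k+1}) \otimes \cdots]\\
&\quad + f_m(\xi)[\cdots \otimes A_k \otimes \lambda_l(D_ib_{k+1}) \otimes \cdots] + f_m(\xi)[\cdots \otimes A_k \otimes \lambda_l(b_{k+1}) D_i \otimes \cdots],
\end{align*}
together with the single-term identity $f_m(\xi)[\cdots \otimes A_m D_i] = -|\xi|^2 f_{m+1}(\xi)[\cdots \otimes A_m \otimes \lambda_l(D_ix)]$ for $k = m$. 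To prove these, I would take the defining simplex integrand in \eqref{eq:Iochum-Masson functional calc}, apply the Leibniz rule to $D_i$ acting on $e^{(s_{k+1}-s_k)|\xi|^2 x}\cdot b_{k+1}\cdot \phi$, and handle the derivative of the exponential via Duhamel's formula $D_i(e^{tx}) = t\int_0^1 e^{utx}(D_ix)e^{(1-u)tx}du$; the substitution $s' = s_k + u(s_{k+1}-s_k)$ inserts a new coordinate into the simplex, promoting $\Delta_m$ to $\Delta_{m+1}$ with the orientation responsible for the sign and the $|\xi|^2$ prefactor.

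\emph{Induction.} By linearity I may assume $\mathbf{B}_j = b_j \mathbf{D}^{\alpha_j}$ and induct on the lexicographically ordered pair $\big(\sum_j|\alpha_j|_1,\ m - \max\{j : \alpha_j \neq 0\}\big)$, with the base case $\sum_j|\alpha_j|_1 = 0$ supplied by the scalar case above. Both Lemmas \ref{zeroth workhorse lemma}/\ref{first workhorse lemma} (applied to the rightmost $\mathbf{D}_i$) and the matching $f_m$-recursion produce the same number of terms in the same positions, and the coefficients match because the $-|\xi|^2$ on the promoted term exactly accounts for the $(-1)^m|\xi|^{2m} \to (-1)^{m+1}|\xi|^{2(m+1)}$ swing between $S^m$ and $S^{m+1}$. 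The main obstacle I anticipate is precisely this careful bookkeeping: ensuring that the orientation of the Duhamel-inserted simplex coordinate lines up with $\Delta_{m+1}$ and that all signs and factors of $|\xi|^2$ are consistent between the $S^m$ and $f_m$ recursions.
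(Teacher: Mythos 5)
Your proposal is correct and follows essentially the same route as the paper: the scalar case via \eqref{eq:S and T}, the dilation identity $|\xi|^{-2m}T^{x}_{(\sigma_{|\xi|^{-2}}g)^{[m]}}=T^{|\xi|^2x}_{g^{[m]}}$ and Lemma \ref{lem:1st calc for coincidence with Iochum-Masson}, then the general case by matching the recursions of Lemmas \ref{zeroth workhorse lemma} and \ref{first workhorse lemma} with the corresponding recursion for $f_m$ and inducting with the scalar identity as base. The only (harmless) difference is that the paper cites \cite[Lemma 2.1]{IM1} for the $f_m$-recursion, whereas you re-derive it via the Leibniz rule and Duhamel's formula, with the sign and $|\xi|^2$ bookkeeping worked out consistently.
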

\begin{proof}
By using change of variables in the definition of the multiple operator integral, and subsequently using Lemma \ref{lem:1st calc for coincidence with Iochum-Masson}, we obtain
\begin{align*}
|\xi|^{-2m}T^{x}_{(\sigma_{|\xi|^{-2}}g)^{[m]}}(b_1,\ldots,b_m)&=T^{|\xi|^2 x}_{g^{[m]}}(b_1,\ldots,b_m)\\
&=(-1)^m f_m(\xi)[b_1\otimes\cdots\otimes b_m].
\end{align*}
By applying \eqref{eq:S and T} we find the first part of the proposition.

When replacing $b_j$ in \eqref{eq:IM operators} by differential operators, the left-hand side satisfies the same recursive properties as the right-hand side: compare \cite[Lemma 2.1]{IM1} with our Lemma \ref{zeroth workhorse lemma} and Lemma \ref{first workhorse lemma}. By induction (in which \eqref{eq:IM operators} is the induction base) the second part of the proposition follows.
\end{proof} 

\begin{thm}\label{thm:IM}
Let $\theta$ be such that $C(\mathbb T_\theta^d)\subseteq M_N(C(\mathbb T^d))$. 
With the notations as above, we have, for all $\frac{k}{2}\leq m\leq k$ and $\mathscr{A}\subseteq\{1,\ldots,d\}$ with $|\mathscr A|=2m-k$,
\begin{align}\label{eq:IM thm}
\nonumber &(-1)^{\frac{k}{2}}\pi^{\frac{d}{2}}\sum_{\iota:\mathscr{A}\to\{1,\ldots,d\}} c_d^{(\iota)}\, \mathbf T^{x,m}_{F_{k,d}}(\mathbf W_1^{\mathscr{A},\iota},\ldots,\mathbf W_m^{\mathscr{A},\iota})\\
&=\int_{\mathbb R^d}(-1)^mf_m(\xi)[W_1^{\mathscr A}(\xi)\otimes\cdots\otimes W_m^{\mathscr A}(\xi)]\, d\xi,
\end{align}
and our main result (Theorem \ref{thm: recursive formula}) therefore reproduces \eqref{eq:Iochum-Masson} due to \cite{IM1}.
\end{thm}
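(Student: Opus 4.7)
The plan is to traverse, in reverse order, the chain of identities used in the proof of Theorem \ref{thm: recursive formula}, combining each step with the dictionary supplied by Proposition \ref{prop:IM operators}. Nothing essentially new is required: the result is an algebraic bookkeeping exercise once one recognises that Proposition \ref{prop:IM operators} is precisely the translation between the $f_m$-notation of \cite{IM1} and the objects $S^m_{\xi,i\lambda}$ sitting inside our main theorem.

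First, I would apply Proposition \ref{prop:IM operators} pointwise in $\xi\in\mathbb R^d$, with $\mathbf B_j:=\mathbf W_j^{\mathscr A}(\xi)$ (lifting $W_j^{\mathscr A}(\xi)$ through $\pi$), to rewrite the right-hand side of \eqref{eq:IM thm} as
$$\frac{1}{2\pi}\int_{\mathbb R^d}|\xi|^{-2m}\int_{-\infty}^\infty S^m_{\xi,i\lambda}(\mathbf W_1^{\mathscr A}(\xi),\ldots,\mathbf W_m^{\mathscr A}(\xi))\,e^{i\lambda}\,d\lambda\,d\xi.$$
Second, since each $\mathbf W_j^{\mathscr A}(\xi)$ is, for $j\in\mathscr A$, a linear combination $\sum_i \xi_i\mathbf A_i$ (and equal to $\mathbf P$ otherwise), multilinearity of $S^m_{\xi,i\lambda}$ yields
$$S^m_{\xi,i\lambda}(\mathbf W_1^{\mathscr A}(\xi),\ldots,\mathbf W_m^{\mathscr A}(\xi))=\sum_{\iota:\mathscr A\to\{1,\ldots,d\}}\Big(\prod_{j\in\mathscr A}\xi_{\iota(j)}\Big)S^m_{\xi,i\lambda}(\mathbf W_1^{\mathscr A,\iota},\ldots,\mathbf W_m^{\mathscr A,\iota}),$$
exactly as in the proof of Theorem \ref{thm: recursive formula}.

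Third, factor $|\xi|^{-2m}=|\xi|^{-k}\cdot|\xi|^{-|\mathscr A|}$ (using $|\mathscr A|=2m-k$) and apply the angular integration formula \eqref{polar salpha formula} to the radially invariant integrand: the combination $\prod_{j\in\mathscr A}\xi_{\iota(j)}/|\xi|^{|\mathscr A|}$ separates out, producing the constant $c_d^{(\iota)}$ while the radial factor $|\xi|^{-k}$ remains inside the integral. What is left is
$$\frac{1}{2\pi}\sum_\iota c_d^{(\iota)}\int_{\mathbb R^d}|\xi|^{-k}\int_{-\infty}^\infty S^m_{\xi,i\lambda}(\mathbf W_1^{\mathscr A,\iota},\ldots,\mathbf W_m^{\mathscr A,\iota})\,e^{i\lambda}\,d\lambda\,d\xi,$$
which, by Corollary \ref{cor:S and bT}, equals the left-hand side of \eqref{eq:IM thm}. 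Summing \eqref{eq:IM thm} over $\frac{k}{2}\leq m\leq k$ and over $\mathscr A\subseteq\{1,\ldots,m\}$ with $|\mathscr A|=2m-k$, and comparing with \eqref{eq:Iochum-Masson} and our main theorem, then shows that both recipes compute the same $I_k(P)$.

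The only analytical concern is the interchange of $\int_{\mathbb R^d}$, $\int_{-\infty}^\infty$ and the angular integration, but each of these interchanges is already justified inside the proof of Theorem \ref{thm: recursive formula} (which depends on exactly the same chain of integrals), so no further analysis is needed. The remaining work is to spell out cleanly that $\pi(\mathbf W_j^{\mathscr A}(\xi))=W_j^{\mathscr A}(\xi)$ and to keep the signs $(-1)^m$ versus $(-1)^{k/2}$ straight; the factor $(-1)^m$ on the $f_m$-side is absorbed by the sign $(-1)^m$ that Proposition \ref{prop:IM operators} attaches to $f_m$ in its identification with $S^m$.
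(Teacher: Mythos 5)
Your proposal is correct and uses exactly the same ingredients as the paper's proof -- Proposition \ref{prop:IM operators} as the dictionary, the multilinear expansion of $\mathbf W_j^{\mathscr A}(\xi)$ over $\iota$, the angular integration formula \eqref{polar salpha formula}, and Corollary \ref{cor:S and bT} -- merely traversed from the right-hand side to the left, whereas the paper works both sides toward the common middle expression $\sum_\iota c_d^{(\iota)}\int_{\mathbb R^d}|\xi|^{2m-k}f_m(\xi)[\cdots]\,d\xi$. This is essentially the same argument, and your sign and radial-invariance bookkeeping is consistent with the paper's.
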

\begin{proof}
By applying Corollary \ref{cor:S and bT} and, subsequently, Proposition \ref{prop:IM operators}, we find
\begin{align}\label{eq:IM thm 1}
\nonumber&(-1)^{\frac{k}{2}}\pi^{\frac{d}{2}}\sum_{\iota:\mathscr{A}\to\{1,\ldots,d\}} c_d^{(\iota)}\, \mathbf T^{x,m}_{F_{k,d}}(\mathbf W_1^{\mathscr{A},\iota},\ldots,\mathbf W_m^{\mathscr{A},\iota})\\
\nonumber&=\sum_{\iota:\mathscr{A}\to\{1,\ldots,d\}} c_d^{(\iota)}\frac{1}{2\pi}\int_{\mathbb R^d}|\xi|^{-k}\int_{-\infty}^\infty S^m_{\xi,i\lambda}(\mathbf W_1^{\mathscr{A},\iota},\ldots,\mathbf W_m^{\mathscr{A},\iota})e^{i\lambda}\,d\lambda\,d\xi\\
&=(-1)^m\sum_{\iota:\mathscr{A}\to\{1,\ldots,d\}} c_d^{(\iota)}\int_{\mathbb R^d}|\xi|^{2m-k}f_m(\xi)[\pi(\mathbf W_1^{\mathscr{A},\iota})\otimes\cdots\otimes \pi(\mathbf W^{\mathscr{A},\iota}_m)]\,d\xi.
\end{align}

Turning our attention to the right-hand side of \eqref{eq:IM thm}, we express $W_j^{\mathscr{A}}(\xi)$ in terms of $W_j^{\mathscr{A},\iota}:=\pi(\mathbf W_j^{\mathscr{A},\iota})$ (see \eqref{eq:W notation} and \eqref{eq:bW}), and find
$$f_m[W_1^\mathscr{A}(\xi)\otimes\cdots\otimes W_m^{\mathscr{A}}(\xi)]=\sum_{\iota:\mathscr{A}\to\{1,\ldots,d\}}\prod_{j\in\mathscr{A}}\xi_{\iota(j)}f_m[W_1^{\mathscr{A},\iota}\otimes \cdots\otimes W_m^{\mathscr{A},\iota}].$$
Hence, by using \eqref{polar salpha formula},
\begin{align}\label{eq:IM thm 2}
\nonumber&(-1)^m\int_{\mathbb R^d} f_m(\xi)[W_1^\mathscr{A}(\xi)\otimes\cdots\otimes W_m^\mathscr{A}(\xi)]\,d\xi\\
&=(-1)^m\sum_{\iota:\mathscr{A}\to\{1,\ldots,d\}} c_d^{(\iota)}\int_{\mathbb R^d}|\xi|^{2m-k}f_m(\xi)[W_1^{\mathscr{A},\iota}\otimes\cdots\otimes W_m^{\mathscr{A},\iota}]\,d\xi
\end{align}
The theorem follows by combining \eqref{eq:IM thm 1} with \eqref{eq:IM thm 2}.
\end{proof}



\appendix
\section{Comments on the accompanying python program}\label{appendix A}
Accompanying this paper is a python script that computes $I_k=I_k(\lambda_l(x)\Delta+\sum_i\lambda_l(a_i)D_i+\lambda_l(a))$ for any $k\in \mathbb Z_+$ in terms of multiple operator integrals with arguments in $C^\infty(\mathbb T^d_\theta)$. The program can also be found on the Github page \url{https://github.com/TDHvanNuland/I_k}.

The program outputs an identity (formatted in latex) with $I_k$ on the left-hand side and an expression on the right-hand side with explicit dependency on $d$. The program can be easily adjusted to match the output type one prefers. 

The last line of the program fixes the value of $k$.
For example, to compute $I_2$ one can replace the last line \begin{verbatim}
print_I(4)
\end{verbatim}
with the line
\begin{verbatim}
print_I(2)
\end{verbatim}
and run the file, for example by opening a terminal, navigating to the correct directory, and typing
\begin{verbatim}
python3 I_k.py
\end{verbatim}
(any installed version of python should work). The output should be as follows.
{\tiny
$$-\pi^{-d/2}I_2=\sum_{i}\Bigg(
2T^x_{F_{2,d}^{[3]}}(x,D_ix,D_ix)+T^x_{F_{2,d}^{[2]}}(x,D_iD_ix)+T^x_{F_{2,d}^{[2]}}(a_i,D_ix)+T^x_{F_{2,d}^{[1]}}(a/d)\Bigg)$$
$$+\sum_{i}\frac{1}{d}\Bigg(
4T^x_{F_{2,d}^{[4]}}(x,D_ix,x,D_ix)+4T^x_{F_{2,d}^{[3]}}(x,D_ix,D_ix)+8T^x_{F_{2,d}^{[4]}}(x,x,D_ix,D_ix)$$
$$+4T^x_{F_{2,d}^{[3]}}(x,x,D_iD_ix)+2T^x_{F_{2,d}^{[3]}}(x,D_ix,a_i)+2T^x_{F_{2,d}^{[2]}}(x,D_ia_i)$$
$$+2T^x_{F_{2,d}^{[3]}}(x,a_i,D_ix)+2T^x_{F_{2,d}^{[3]}}(a_i,x,D_ix)+T^x_{F_{2,d}^{[2]}}(a_i,a_i)\Bigg)$$
}
Computing $I_4$ in this way produces the 1046 terms in an instant, and computing $I_6$ takes about 10 minutes on an Intel(R) Core(T) i9-10900 CPU @ 2.80GHz, and produces 140845 terms (of course excluding sums over the indices $i,j,k,\ldots$, as otherwise the amount of terms depends on $d$).


\begin{thebibliography}{99}
\bibitem{ACDS} Azamov N., Carey A., Dodds P., Sukochev F. {\it Operator integrals, spectral shift, and spectral flow.} Canad. J. Math. {\bf 61} (2009), no. 2, 241--263. 

\bibitem{Cohen-Connes}
Cohen P., Connes A. {\it Conformal geometry of the irrational rotation algebra.} Preprint MPI/92-93. 

\bibitem{Connes-NCG} Connes A. {\it Noncommutative geometry.} Academic Press, Inc., San Diego, CA, 1994.

\bibitem{Connes-action} Connes A. {\it The action functional in noncommutative geometry.} Comm. Math. Phys. {\bf 117} (1988), no. 4, 673--683.

\bibitem{ConnesDG} Connes A. {\it $C^{\ast}$ algebres et geometrie differentielle.} C. R. Acad. Sci. Paris Ser. A-B {\bf 290} (1980), no. 13, A599--A604. 

\bibitem{Connes-Chamseddine} Chamseddine A., Connes A. {\it Resilience of the spectral standard model} J. High Energy Phys. {\bf 2012} (2012), no. 104, 11 pp.

\bibitem{CDS} Connes A., Douglas M., Schwarz A. {\it Noncommutative geometry and matrix theory} J. High Energy Phys. {\bf 02} (1998), no. 003, 35 pp.

\bibitem{CoFa} Connes A., Fathizadeh F. {\it The term $a_4$ in the heat kernel expansion of noncommutative tori.} M\"unster J. of Math. {\bf 12} (2019), 239--410.

\bibitem{Connes-Moscovici} Connes A., Moscovici H. {\it Modular curvature for noncommutative two-tori.} J. Amer. Math. Soc. {\bf 27} (2014), no. 3, 639--684.

\bibitem{CSZ} Connes A., Sukochev F., Zanin D. {\it Trace theorem for quasi-Fuchsian groups.} (Russian) Mat. Sb. {\bf 208} (2017), no. 10, 59--90; translation in Sb. Math. {\bf 208} (2017), no. 10, 1473--1502.

\bibitem{Connes-Tretkoff} Connes A., Tretkoff P.
{\it The Gauss-Bonnet theorem for the noncommutative two torus.} Noncommutative geometry, arithmetic, and related topics, 141--158, Johns Hopkins Univ. Press, Baltimore, MD, 2011.

\bibitem{Chamseddine-van Suijlekom} Chamseddine A., van Suijlekom W. {\it A survey of spectral models of gravity coupled to matter.} In: Chamseddine A., Consani C., Higson N., Khalkhali M., Moscovici H., Yu G. (eds.) Advances in Noncommutative Geometry: On the Occasion of Alain Connes' 70th Birthday (2019), pp. 1--51.

\bibitem{DaSi} Dabrowski L., Sitarz A. {\it Curved noncommutative torus and Gauss-Bonnet.} J. Math. Phys. {\bf 54} (2013), no. 1, 013518, 11 pp.

\bibitem{DS} Dykema K., Skripka A. {\it Higher order spectral shift.} J. Funct. Anal. {\bf 257} (2009), no. 4, 1092--1132.


\bibitem{Fa} Fathizadeh F. {\it On the scalar curvature for the noncommutative four torus.} J. Math. Phys. {\bf 56} (2015), no. 6, 062303, 14 pp.

\bibitem{FaKh1} Fathizadeh F., Khalkhali M. {\it Scalar curvature for the noncommutative two torus.} J. Noncommut. Geom. {\bf 7} (2013), no. 4, 1145--1183.

\bibitem{FaKh} Fathizadeh F., Khalkhali M. {\it Scalar curvature for noncommutative four-tori.} J. Noncommut. Geom. {\bf 9} (2015), no. 2, 473--503.

\bibitem{GloGorKh} Floricel R., Ghorbanpour A., Khalkhali M. {\it The Ricci curvature in noncommutative geometry.} J. Noncommut. Geom. {\bf 13} (2019), no. 1, 269--296.

\bibitem{Folland} Folland G., {\it How to integrate a polynomial over a sphere.} Am. Math. Mon. {\bf 108} (2001), no. 5, 446--448. 

\bibitem{Gilkey} Gilkey P. {\it The spectral geometry of a Riemannian manifold}. J. Diff. Geom. {\bf 10} (1975), 601--618.

\bibitem{Grosse-Wulkenhaar}
H.~Grosse and R.~Wulkenhaar,
\newblock {Renormalization of $\phi^4$ theory on noncommutative $\mathbb R^4$ in the matrix base}.
\newblock \textit{Commun. Math. Phys.} 
\textbf{256} (2005),  305--374.

\bibitem{Ponge1} Ha H., Lee G., Ponge R. {\it Pseudodifferential calculus on noncommutative tori, I. Oscillating integrals.} Internat. J. Math. {\bf 30} (2019), no. 8, 1950033, 74 pp.

\bibitem{Ponge2} Ha H., Lee G., Ponge R. {\it Pseudodifferential calculus on noncommutative tori, II. Main properties.} Internat. J. Math. {\bf 30} (2019), no. 8, 1950034, 73 pp.

\bibitem{PongeHa} Ha H., Ponge R. {\it Laplace-Beltrami operators on noncommutative tori.} J. Geom. Phys. {\bf 150} (2020), 103594, 25 pp.


\bibitem{IM1} Iochum B., Masson T. {\it Heat trace for Laplace type operators with non-scalar symbols.} J. Geom. Phys. {\bf 116} (2017), 90--118.

\bibitem{IM2} Iochum B., Masson T. {\it Heat asymptotics for nonminimal Laplace type operators and application to noncommutative tori.} J. Geom. Phys. {\bf 129} (2018), 1--24.

\bibitem{IM3} Iochum B., Masson T. {\it Heat coefficient $a_4$ for non minimal Laplace type operators.} J. Geom. Phys. {\bf 141} (2019), 120--146.

\bibitem{IM202X} Iochum B., Masson T. {\it On the computation of the heat kernel coefficients for strongly elliptic differential operators.} Manuscript in preparation, personal communication.

\bibitem{KS} Konechny A., Schwarz A. {\it Introduction to M(atrix) theory and noncommutative geometry.} Phys. Rep. {\bf 360} (2002) issues 5--6, 353--456.

\bibitem{Lesch} Lesch M. {\it Divided differences in noncommutative geometry: rearrangement lemma, functional calculus and expansional formula.} J. Noncommut. Geom. {\bf 11} (2017), no. 1, 193--223.

\bibitem{Lesch-Moscovici} Lesch M., Moscovici H. {\it Modular curvature and Morita equivalence.} Geom. Funct. Anal. {\bf 26} (2016), no. 3, 818--873.

\bibitem{Lesch-Moscovici2019} Lesch M., Moscovici H. {\it Modular Gaussian curvature.} In: Chamseddine A., Consani C., Higson N., Khalkhali M., Moscovici H., Yu G. (eds.) Advances in Noncommutative Geometry: On the Occasion of Alain Connes' 70th Birthday (2019): 463-490.

\bibitem{Liu} Liu Y. {\it Modular curvature for toric noncommutative manifolds.} J. Noncommut. Geom. {\bf 12} (2018), no. 2, 511--575.

\bibitem{Liu1} Liu Y. {\it Scalar curvature in conformal geometry of Connes-Landi noncommutative manifolds.} J. Geom. Phys. {\bf 121} (2017), 138--165.

\bibitem{Liu2022} Liu Y. {\it Cyclic structure behind modular Gaussian curvature.} arXiv preprint [math.QA], https://arxiv.org/pdf/2201.08730.pdf.

\bibitem{LSZ-book} Lord S., Sukochev F., Zanin D. {\it Singular traces. Theory and applications.} De Gruyter Studies in Mathematics, {\bf 46}. De Gruyter, Berlin, 2013.

\bibitem{MPSZ} McDonald E., Ponge R. {\it Connes integration formula for curved non-commutative torus.}

\bibitem{vNvS} van Nuland T., van Suijlekom W. {\it Cyclic cocycles in the spectral action.} J. Noncommut. Geom., {\bf 16} (2022), no. 3, 1103--1135.

\bibitem{Peller} Peller V. {\it Multiple operator integrals and higher operator derivatives.}  J. Funct. Anal. {\bf 233} (2006), no. 2, 515--544. 

\bibitem{PSW} de Pagter B., Witvliet H., Sukochev F. {\it Double operator integrals.} J. Funct. Anal. {\bf 192} (2002), no. 1, 52--111.

\bibitem{PS}
Potapov D., Sukochev F. {\it Unbounded Fredholm modules and double operator integrals.} J. Reine Angew. Math. {\bf 626} (2009), 159--185.

\bibitem{Rieffel1990} Rieffel M. {\it Non-commutative tori -- a case study of non-commutative differentiable manifolds} Contemp. Math. {\bf 105} (1990), 191--211.

\bibitem{Rieffel} Rieffel M. {\it Deformation quantization for actions of $\mathbb{R}^d.$} Mem. Amer. Math. Soc. {\bf 106} (1993), no. 506, x+93 pp.





\bibitem{Skripka-Tomskova}
Skripka A., Tomskova A. {\it Multilinear Operator Integrals: Theory and Applications.} Lecture Notes in Math. 2250, Springer International Publishing, 2019, XI+192 pp.

\bibitem{van Suijlekom} van Suijlekom W. {\it Noncommutative Geometry and Particle Physics.} Mathematical Physics Studies. Springer, 2015, xvi+237 pp.

\bibitem{SZ-local} Sukochev F., Zanin D. {\it Local invariants of non-commutative tori.} St. Petersb. Math. J. {\bf 35} (2023), no. 2, 174--225. 

\bibitem{SZ-fubini} Sukochev F., Zanin D. {\it Fubini theorem in noncommutative geometry.}
J. Funct. Anal. {\bf 272} (2017), no. 3, 1230--1264.

\bibitem{SZ-character} Sukochev F., Zanin D. {\it  The Connes character formula for locally compact spectral triples.} arXiv:1803.01551

\bibitem{Vassilevich}
Vassilevich D. {\it Heat kernel expansion: User's manual}. Phys. Rept. {\bf 388} (2003), 279--360.

\bibitem{Weyl} Weyl H. {\it Das asymptotische Verteilungsgesetz der Eigenwerte linearer partieller Differentialgleichungen (mit einer Anwendung auf die Theorie der Hohlraumstrahlung).} Math. Ann. {\bf 71} (1912), no. 4, 441--479. 


\bibitem{XXY} Xiong X., Xu Q., Yin Z. {\it Sobolev, Besov and Triebel-Lizorkin spaces on quantum tori.} Mem. Amer. Math. Soc. {\bf 252} (2018) no. 1203, 86 pages.
\end{thebibliography}
\end{document}